\documentclass[11pt]{amsart}
\usepackage[margin=1in]{geometry}
\usepackage{graphicx}
\usepackage{soul}
\usepackage{amsthm, amsmath, amssymb, bm, bbm,euscript}
\usepackage{tikz}
\usetikzlibrary{positioning}
\usetikzlibrary{arrows.meta}
\usetikzlibrary{fit}
\usetikzlibrary{calc}
\usepackage{mathrsfs}
\usepackage{stmaryrd}

\usepackage{nicematrix}
\usepackage{tikz-cd}
\usepackage{framed}

\usepackage[utf8]{inputenc}
\usepackage[T1]{fontenc}
\usepackage[textsize=scriptsize,backgroundcolor=orange!5]{todonotes}

\usepackage{hyperref}
\usepackage{url}

\usepackage{mathtools}

\usepackage[noabbrev,capitalize]{cleveref}
\crefname{equation}{}{}
\usepackage{xcolor}

\usepackage{xcolor,graphics, graphicx}
\usepackage{floatrow}
\usepackage{verbatim}
\newfloatcommand{capbtabbox}{table}[][\FBwidth]

\numberwithin{equation}{section}

\newtheorem{theorem}{Theorem}[section]
\newtheorem{proposition}[theorem]{Proposition}
\newtheorem{lemma}[theorem]{Lemma}
\newtheorem{lemma/defn}[theorem]{Lemma/Definition}

\newtheorem*{question*}{Question}

\theoremstyle{definition}
\newtheorem{definition}[theorem]{Definition}

\newtheorem{example}[theorem]{Example}

\newtheorem{construction}[theorem]{Construction}
\newtheorem{notation}[theorem]{Notation}

\theoremstyle{remark}
\newtheorem{remark}[theorem]{Remark}

\newcommand{\mc}{\mathcal}
\newcommand{\mf}{\mathfrak}
\newcommand{\eu}[1]{\EuScript{#1}}

\newcommand{\res}{\mathrm{res}}

\renewcommand{\subset}{\subseteq}
\renewcommand{\supset}{\supseteq}

\newcommand{\R}{\mathbb{R}}
\newcommand{\N}{\mathbb{N}}
\newcommand{\Z}{\mathbb{Z}}
\newcommand{\C}{\mathbb{C}}

\newcommand{\Q}{\mathbb{Q}}

\DeclareMathOperator{\1}{\mathbbm{1}}

\DeclareMathOperator{\es}{\varnothing}

\newcommand{\mfp}{\mathfrak{p}}

\newcommand{\Spec}{\mathrm{Spec}}

\newcommand{\kbar}{\overline{k}}

\newcommand{\Hom}{\mathrm{Hom}}

\renewcommand{\subset}{\subseteq}

\newcommand{\msF}{\mathscr{F}}

\newcommand{\mfn}{\mathfrak{n}}

\newcommand{\Gal}{\mathrm{Gal}}
\renewcommand{\O}{\mathcal{O}}

\renewcommand{\Re}{{\rm Re}}

\renewcommand{\mod}{\ {\rm mod} \ }

\newcommand{\disc}{{\rm disc}}
\newcommand{\rad}{{\rm rad}}
\newcommand{\Sur}{{\rm Sur}}

\newcommand{\Frob}{{\rm Frob}}

\newcommand{\NN}{{\rm N}}

\DeclareMathOperator{\Res}{{\rm Res}}

\DeclareMathOperator*{\sumflat}{\sideset{}{^\flat}{\sum}}
\DeclareMathOperator*{\prodflat}{\sideset{}{^\flat}{\prod}}

\usepackage[OT2,T1]{fontenc}
\DeclareSymbolFont{cyrletters}{OT2}{wncyr}{m}{n}
\DeclareMathSymbol{\Sha}{\mathalpha}{cyrletters}{"58}

\title{Erd\H{o}s--Kac theorems for discriminants of number fields}
\author{Jack B. Miller}
\date{\today}

\allowdisplaybreaks

\begin{document}

\begin{abstract}
	The classical Erd\H{o}s--Kac theorem gives a central limit theorem for the number of prime divisors of a random integer.
	We prove an analog for the number of ramified primes in a random $G$-extension of a number field when $G$ is abelian.
	This builds on previous work of Lemke Oliver and Thorne in the cases $G = S_d$ ($2 \le d \le 5$), and provides the first examples where local ramification events at distinct primes are \textit{not} independent.

	We develop probability results that can be used ``out of the box'' to prove Erd\H{o}s--Kac theorems for sequences of ideals in a number field, subject to Tauberian hypotheses involving finite sums of Euler products.
\end{abstract}

\maketitle

\section{Introduction}
\label{sec:introduction}

A question of central importance in analytic number theory is the distribution of prime factors of a random integer $D \in\N$.
Similarly, a question of central importance in arithmetic statistics is the distribution of ramified primes of a random number field $K/\Q$.
In fact, these two questions may be combined by considering a natural invariant $D_K \in \N$, the \textit{absolute discriminant} of $K/\Q$, for which the prime factors of $D_K$ coincide with the ramified primes of $K/\Q$.

Given a random integer $n \in \N$, Erd\H{o}s and Kac \cite{ErdosKacOriginalPaper} proved a central limit theorem for the number of distinct prime factors $\omega(n)$.
Namely, they showed for every interval $I \subset \R$ that
\[
\lim_{X \to \infty} \frac{1}{X} \cdot
\#\left\{n \in \N: n\leq X,
\frac{\omega(n) - \log\log X}{\sqrt{\log\log X}} \in I
\right\} 
= \frac{1}{\sqrt{2\pi}} \int_I
e^{-t^2/2} \,dt.
\]

Given a random degree $d$ extension $K/\Q$ whose Galois closure has Galois group $S_d$, Lemke Oliver and Thorne \cite{LemkeOliverThorneErdosKac} analogously showed for all $d \in \{2,3,4,5\}$ and every interval $I \subset \R$ that
\[
\lim_{X\to\infty} \frac{1}{\#\{K/\Q : D_K \leq X\}}
\cdot \#\left\{
K/\Q : D_K \leq X, \frac{\omega(D_K) - \log\log X}{\sqrt{\log\log X}} \in I
\right\} =
\frac{1}{\sqrt{2\pi}} \int_I
e^{-t^2/2} \,dt,
\]
where $K/\Q$ varies over degree $d$ $S_d$-extensions.

In this paper we provide a framework for proving Erd\H{o}s--Kac-type results for discriminants of $G$-extensions of number fields where $G$ is a finite group.

\subsection{Abelian extensions ordered by discriminant}

We begin by providing a simple-to-state result that illustrates new features of our work.

Given an extension of number fields $L/K$ of finite degree, let $\disc(L/K) \subset \O_K$ denote the discriminant ideal, $|\disc(L/K)| \in \N$ denote the absolute discriminant, and define
\[
\omega(L/K) := \#\{\mfp \subset \O_K \text{ prime}: \mfp \mid \disc(L/K)\}.
\]

\begin{theorem}
	\label{thm:Erdos Kac abelian disc}
	Let $G$ be a nontrivial finite abelian group, and $k$ a number field.
	Let $p$ be the smallest prime dividing the order of $G$, $r$ denote the $p$-rank of $G$, and define
	\[
	b(k,G) := \frac{p^r - 1}{[k(\zeta_p):k]}.
	\]
	Then for every interval $I \subset \R$ one has that
	\begin{multline*}
	\lim_{X\to\infty} \frac{1}{\#\{K/k : |\disc(K/k)| \leq X\}} \cdot \#\left\{
	K/k : |\disc(K/k)| \leq X, 
	\frac{\omega(K/k)-b(k,G) \log\log X}{\sqrt{b(k,G) \log\log X}} \in I
	\right\}
	\\ = 
	\frac{1}{\sqrt{2\pi}} \int_I e^{-t^2/2} \,dt,
	\end{multline*}
	where $K/k$ varies over $G$-extensions.
\end{theorem}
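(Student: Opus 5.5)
We prove the theorem by the method of moments, applied through the generating Dirichlet series of abelian extensions. The standard object is Wright's description of abelian $G$-extensions via class field theory. The series $\sum_{K/k} |\disc(K/k)|^{-s}$ is a finite linear combination of Euler products, indexed by characters of a finite idelic quotient or by ray class group data. Each of these products is locally of the form $\prod_{\mfp}\bigl(1+\sum_{g\ne 1} c_{\mfp}(g)\NN\mfp^{-d(g)s}+\dots\bigr)$. The rightmost pole sits at $s=a(G)=\frac{|G|}{|G|(1-1/p)}$, in the normalization where $d(g)=|G|(1-1/|g|)$. It comes from the elements of order $p$, since $p$ is the smallest prime divisor of $|G|$ and these elements minimize $d(g)$. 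There are $p^r-1$ such elements, and tame inertia at $\mfp$ must be cyclic with image generated by an element of order $p$. Such an element can occur as inertia at $\mfp$ only when $\NN\mfp\equiv 1 \bmod p$, so the occurring elements come in Galois orbits under the cyclotomic action. By Chebotarev, on average this gives a pole of order $b(k,G)=(p^r-1)/[k(\zeta_p):k]$. This recovers Wright's asymptotic $c X^{1/a}(\log X)^{b-1}$.

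To obtain the Erdős--Kac statement, we twist by a marker. For a complex parameter $z$ near $1$ (or, for the moment method, for a finite set $S$ of primes), consider $\sum_K z^{\omega(K/k)}|\disc(K/k)|^{-s}$. This multiplies each nontrivial local ramified term by $z$, so the Euler products become, up to factors holomorphic and nonvanishing in a slightly larger half-plane, $\zeta_{k(\zeta_p)}$-type factors raised to $z b(k,G)$. This holds uniformly in $z$ on compact sets. A Selberg--Delange-type Tauberian argument then gives
\[
\sum_{|\disc(K/k)|\le X} z^{\omega(K/k)} = C(z) X^{1/a}(\log X)^{z b-1}\bigl(1+O((\log X)^{-1})\bigr)
\]
uniformly for $|z|\le 2$ with $C(1)\neq 0$. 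The sum over the finitely many Euler products is harmless, because the terms achieving the maximal pole order all share the same $(\log X)^{zb}$ growth. Dividing by the $z=1$ count gives the moment generating function $\exp\bigl((z-1)b\log\log X\bigr)\cdot C(z)/C(1)\cdot(1+o(1))$. This is the mod-Poisson convergence of Kowalski--Nikeghbali. Taking $z=e^{it/\sqrt{b\log\log X}}$ gives characteristic functions converging to $e^{-t^2/2}$, and Lévy's continuity theorem yields the theorem for every interval $I$.

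The main obstacle is twofold. First, we must justify the Euler product structure with $z$ inserted. The local events at distinct primes are not independent: the sum over characters couples them, and wild ramification at primes above $|G|$ has to be handled. This is resolved because only finitely many primes are wild, and their contribution is a polynomial in $z$ that is absorbed into $C(z)$. Second, we must prove the uniform Selberg--Delange estimate with a possibly nonholomorphic exponent $zb$. To do this, we write each Euler product as $\prod_{\chi}L(s,\chi)^{z\alpha_\chi}H(s,z)$ with Hecke characters $\chi$ of $\Gal(k(\zeta_p)/k)$. Only the trivial character contributes a pole, so the standard zero-free region and a Hankel-contour computation apply. We would first check that the non-maximal Euler products in the sum contribute lower powers of $\log X$ uniformly in $z$ near $1$. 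This holds because their pole order is $z b'$ with $b'<b$, or they vanish.
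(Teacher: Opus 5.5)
Your argument is correct in outline, but it follows a genuinely different route from the paper's.

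\textbf{The paper's route.} The paper never twists by $z^{\omega}$. It proves a general criterion (\cref{thm:EK criterion for ideals}) whose only arithmetic inputs are a crude tail bound and the proportion of extensions ramified at a prescribed tuple $\eu{P}$ of primes. That proportion is approximated by the algebraic moments of a well-mixed Bernoulli process built from Wright's finite sum of Euler products, with an error polynomial in $\prod_{\mfp\in\eu{P}}\NN\mfp$; this is why it needs the uniform count \cref{prop:quantitative equidistribution for abelian extensions}, obtained from Alberts' explicit Tauberian theorem. It then bounds the cumulants of a truncated $\omega$ using the law of total cumulance and the fact that the residue operator is a differential operator. As a result, cumulants involving distinct primes gain factors $\log\NN\mfp/\log X$.

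\textbf{Your route.} You use that $z^{\omega(\varphi)}=\prod_v z^{[\varphi_v\ \text{ramified}]}$ is multiplicative over places. So the $\beta$-character sum and the M\"obius inversion over $N\le G$ go through verbatim, and the twisted series is again a finite sum of Euler products. Selberg--Delange then handles the dependence between primes and the lower-order powers of $\log X$ at once, because every maximal-order product carries the same factor $(\log X)^{zb-1}$.

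\textbf{What each buys.} Your route needs no uniformity in $\eu{P}$ and gives the stronger mod-Poisson statement. However, it requires continuation with complex exponents (zero-free regions and growth bounds for Hecke $L$-functions) and a global Euler-product structure for the full generating series. It is therefore tied to the class-field-theoretic abelian setting and to $\omega$ whose local weights are Frobenian. The paper's criterion works for any bounded strongly additive $\omega$ and for any family where only counts with finitely many local conditions are available, which is what it is built for.

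\textbf{Corrections.} Several statements in your sketch need fixing, though none breaks the argument.
\begin{enumerate}
\item The abscissa is $s=1/a$ with $a=|G|(1-1/p)$, not $|G|/(|G|(1-1/p))$.
\item The expansion with exponent $zb-1$ holds only for $z$ in a small neighbourhood of $1$. Its relative error is a negative power of $\log X$, governed partly by the gap between $b$ and the exponents $b'$ of the non-maximal products. For $\Re z<0$ those products can dominate, and $C(z)$ may vanish (e.g.\ at $z=0$). So ``uniformly for $|z|\le 2$'' is false, but you only use $z\to 1$.
\item For a non-principal pair $(N,\beta)$, the leading local coefficient is $c_\mfp=\sum_{\chi_\mfp}e^{2\pi i\langle\chi_\mfp,\beta\rangle_\mfp}$, summed over minimal-weight $\chi_\mfp:\O_\mfp^\times\to N$. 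It depends on $p$-th power residue symbols of the $S$-units appearing in $\beta$, i.e.\ on Frobenius in $k(\zeta_p,\O_S^{\times 1/p})$. These products must therefore be expressed through Hecke $L$-functions of $k(\zeta_p)$, not characters of $\Gal(k(\zeta_p)/k)$.
\item The inequality $b'<b$ then holds because $c_\mfp$ is real (the minimal-weight set is closed under inversion) and $c_\mfp\le\nu_\mfp$. Here $\nu_\mfp=p^r-1$ if $p\mid\NN\mfp-1$ and $\nu_\mfp=0$ otherwise. Equality at almost all $\mfp$ is exactly the situation of \cref{lem:meromorphic continuation Wright and counting series abelian}(iii).
\end{enumerate}
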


\begin{remark}
	When $G = \Z/2\Z$ and $k = \Q$, the constant $b(k,G)$ in \cref{thm:Erdos Kac abelian disc} is equal to $1$.
	This recovers the result of Lemke Oliver and Thorne \cite[Thm.\ 1.1]{LemkeOliverThorneErdosKac} for quadratic number fields.
\end{remark}

\subsubsection{Why is the number of ramified primes $\sim b \log\log X$?}
\label{subsubsec:why b log log X many ramified primes?}

Let $G$ be a nontrivial finite abelian group and $b = b(\Q,G) \in \N$ as above.
For $k=\Q$, \cref{thm:Erdos Kac abelian disc} implies that the discriminant of a random $G$-number field $K/\Q$ is an integer lying in $[-X,X]$ with approximately $b \log \log X$ prime factors on average; this is $b$ times as many prime factors as one would expect for a random integer in $[-X,X]$.
We give a heuristic explanation for this result as follows.

We start by writing (up to some bounded factor)
\[
\disc(K/\Q) = 
\prod_{\mf{S}} \disc(K/\Q,\mf{S})
\]
where $\disc(K/\Q,\mf{S})$ is the part of the discriminant divisible by ramified primes $p \nmid |G|$ where $K/\Q$ has \textit{ramification type $\mf{S}$ above $p$} (\cref{def:ramification type}).

We expect by Malle's original heuristics \cite{MalleMCI,MalleMCII} that the ramification types of \textit{minimal index} \cite{MalleMCI} contain 100\% of the images of local tame inertia generators of a random $G$-extension, and there are $b(\Q,G)$ many minimal index ramification types of $G$ (\cref{rem:number of sectors is Malle b}).
Thus, to leading order we can factor the integer $\disc(K/\Q) \in [-X,X]$ as a product of $b$ ``independent'' integers $\disc(K/\Q,\mf{S})$ of roughly the same size, and then standard analytic-number-theory heuristics predict that the number of primes dividing each integer $\disc(K/\Q,\mf{S})$ is approximately $\log\log X$.
In particular, it would be interesting to generalize \cref{thm:Erdos Kac abelian disc} to a multivariate central limit theorem for the number and distribution of ramified primes sorted by ramification type.

Another heuristic of potential interest to the reader may be found in the survey article by Loughran and Santens \cite[\S7.2]{LoughranSantensSurvey}.

\subsubsection{A non-Billingsley random model}

Lemke Oliver and Thorne \cite{LemkeOliverThorneErdosKac} follow Billingsley's moment method \cite{BillingsleyEK} for proving Erd\H{o}s--Kac theorems.
Our proof also follows a probabilistic moment method to deduce an Erd\H{o}s--Kac result; however, our random model for primes dividing the discriminant is quite different.

Billingsley's method relies on the standard random model for integer divisibility, which predicts that the number of primes dividing a random integer $n\in\N$ can be modeled by a sequence of independent coin flips $B_p \in \{0,1\}$ for every prime $p$.
The probability that the $p$-th coin flip is heads ($B_p = 1$) is equal to $\frac{1}{p}$, and the random variables $B_p$ are referred to as Bernoulli random variables.

Using geometry-of-numbers results to count degree $d$ $S_d$-extensions for $2 \le d \le 5$, Lemke Oliver and Thorne \cite{LemkeOliverThorneErdosKac} argue that the number of primes dividing a random $S_d$-discriminant $\disc(K/\Q) \in \Z$ can be modeled by a sequence of independent coin flips $B_p^{S_d} \in \{0,1\}$.
This time, the probability that the $p$-th coin flip is heads is equal to $\frac{1}{p} + O_{d}(\frac{1}{p^2})$; however, the big-O term does not change the overall result that a central limit theorem holds because of independence, and the number of heads observed among primes $p\le X$ is still $\log\log X$ on average with standard deviation $\sqrt{\log\log X}$.

In the setting of \cref{thm:Erdos Kac abelian disc} where $G$ is abelian, the na\"ive Billingsley model $B_p^{G} \in \{0,1\}$ where $B_p^{G}$ are independent Bernoulli random variables fails to capture the true behavior of primes dividing the discriminant.
For example, Wood \cite[Prop.\ 4.1]{WoodLocalAbelianProbabilities} has shown for all primes $p_1, p_2$ congruent to $1 \mod 4$ that the events $\{K/\Q : p_1 \mid D_K\}$ and $\{K/\Q : p_2 \mid D_K\}$ are \textit{not} independent when $K/\Q$ varies over $\Z/4\Z$-extensions ordered by discriminant.
This prevents us from using the method employed by both Billingsley and Lemke Oliver--Thorne when handling the case of $\Z/4\Z$-number fields ordered by discriminant, and similarly for all $\Z/q^2\Z$-number fields where $q$ is prime.

The random model we develop goes beyond Billingsley-type random models where independence between primes is enforced.
We study the well-known family of \textit{latent class models} appearing in the statistics literature.
When one specializes a latent class model to the case of a finite mixture of Bernoulli random variables, we simply refer to this as a \textit{Bernoulli mixture model}.

For our purposes, a \textit{Bernoulli mixture model} along the sequence of primes is a probability space $\Omega$ and a family of Bernoulli random variables $(B_p)_{p \text{ prime}}$ on $\Omega$ that satisfy the following:
\begin{itemize}
	\item there exists a finite set of measurable events $\Lambda = \{\lambda_1,\ldots,\lambda_{\ell}\}$ such that $\Omega = \coprod_{i=1}^{\ell} \lambda_i$;
	\item for every $\lambda \in \Lambda$, the Bernoulli random variables $B_p$ are independent conditional on $\lambda$.
\end{itemize}
We refer to the data $\Lambda$ as the \textit{set of parameters} of the mixture model.

The reason that Bernoulli mixture models naturally arise in our proof of \cref{thm:Erdos Kac abelian disc} follows from the work of Wright \cite{WrightAbelianExtensions}.
Wright's work shows that, for a fixed number field $k$ and finite abelian group $G$, the Dirichlet series used to count abelian $G$-extensions decomposes as a finite sum of Euler products.
By suitably interpreting the set of Euler products as a parameter set (using the formalism of algebraic probability spaces \cite{NicaSpeicherLecturesOnFreeProbability} explained in \cref{sec:probability theory I}), this allows us to define an algebraic incarnation of a Bernoulli mixture model for ramified primes, which we call a \textit{Bernoulli process} (\cref{def:Bernoulli process for ideals}).
Verifying that a central limit theorem holds for Bernoulli processes that are \textit{well-mixed} (\cref{def:well-mixed Bernoulli process}) is more involved than Billingsley's method, and is a novel development of our paper.

\subsection{Abelian extensions ordered by arbitrary counting functions}
\label{subsec:general results}

Let $k$ be a number field with absolute Galois group $\Gamma_k$, and $G \le S_d$ a transitive permutation group of degree $d \in \Z_{\ge 2}$.
Let $\Sur(\Gamma_k,G)$ denote the set of continuous surjections $\varphi:\Gamma_k \twoheadrightarrow G$, which by Galois correspondence is in bijection with degree $d$ $G$-extensions $k_\varphi/k$ (see \cref{subsec:notation}).
Thus, for every $\varphi \in \Sur(\Gamma_k,G)$ we define the number-of-ramified-primes function
\[
\omega(\varphi) := \#\{\mfp \subset \O_k \text{ prime}: \mfp \text{ ramifies in } k_\varphi\}.
\]
Let $w : G \to \Z_{\ge 0}$ be a \textit{weight function}, i.e.\ a class function such that $w(g) = 0$ if and only if $g$ is the identity element, and $w(g^e) = w(g)$ for all $e \in (\Z/\exp(G)\Z)^\times$.
Wood \cite[\S2.1]{WoodLocalAbelianProbabilities} defines the notion of a \textit{counting function $C:\Hom(\Gamma_k,G) \to \R_{> 0}$ with weight $w$} in the case that $G$ is abelian, and our later definition generalizes this to every finite group $G$ (see \cref{def:height function}).

\begin{example}
	If ${\rm ind}:G\to\Z_{\ge0}$ is the \textit{index function} ${\rm ind}(g) := d - \#\{\text{cycles of }g\}$, then an example of a counting function with weight ${\rm ind}$ is $C(\varphi) := |\disc(k_\varphi/k)|$, which is exactly the absolute discriminant of the $G$-extension $\varphi$.
\end{example}

\begin{example}
	If $w:G\to \Z_{\ge0}$ is the function $w(g) = 1$ for all $g$ nontrivial, then an example of a counting function with weight $w$ is $C(\varphi) := |\rad\,\disc(k_\varphi/k)|$, which is exactly the product of ramified primes of the $G$-extension $\varphi$.
\end{example}

Let $k,G$ be as above, and $w$ a weight function.
We define \textit{Malle's $a$-constant} $a(G,w)$ to be the minimal positive weight attained by $w:G\to\Z_{\ge 0}$.
Let $\mc{M}(G)$ denote the set of minimal positive weight conjugacy classes of $G$, and define \textit{Malle's $b$-constant}
\[
b(k,G,w) := \#\Big( \mc{M}(G) \ / \ \Gal(k(\zeta_{\exp(G)})/k) \Big),
\]
where $\Gal(k(\zeta_{\exp(G)})/k)$ acts on $G$ (and consequently $\mc{M}(G)$) via $g \mapsto g^{{\rm cyc}(\sigma)}$, where ${\rm cyc}: \break \Gal(k(\zeta_{\exp(G)})/k) \to (\Z/\exp(G)\Z)^\times$ is the cyclotomic character.

We state a more general version of \cref{thm:Erdos Kac abelian disc} for $G$ abelian and $C:\Hom(\Gamma_k,G) \to \R_{>0}$ an arbitrary counting function.

\begin{theorem}
	\label{thm:Erdos Kac abelian counting function}
	Let $k,G,w,C,\omega$ be as at the beginning of \cref{subsec:general results}, and assume that $G$ is abelian.
	Then for every interval $I \subset \R$ one has that
	\begin{multline*}
		\lim_{X\to\infty} \frac{1}{\#\{\varphi : C(\varphi) \leq X\}} \cdot \#\left\{
		\varphi : C(\varphi) \leq X, 
		\frac{\omega(\varphi) - b(k,G,w)\log\log X}{\sqrt{b(k,G,w) \log\log X}} \in I
		\right\}
		= 
		\frac{1}{\sqrt{2\pi}} \int_I e^{-t^2/2} \,dt,
	\end{multline*}
	where $\varphi$ varies over $\Sur(\Gamma_k,G)$.
\end{theorem}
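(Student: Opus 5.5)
We prove \cref{thm:Erdos Kac abelian counting function} by the method of moments, applied to a twisted generating function. For a complex parameter $z$ near $1$ and a counting function $C$ with weight $w$, we introduce
\[
F(s,z) := \sum_{\varphi \in \Hom(\Gamma_k,G)} z^{\omega(\varphi)}\, C(\varphi)^{-s}.
\]
By Wright's description of abelian counting (as in Wood's treatment of counting functions), $F(s,z)$ decomposes as a finite sum, indexed by characters of a finite group (equivalently, by Kummer/class-field-theoretic local conditions at finitely many places), of Euler products. Each Euler product has local factors at $\mfp\nmid |G|$ of the shape
\[
1 + z\sum_{1\ne g} \chi_\lambda(\mfp,g)\,\NN\mfp^{-w(g)s},
\]
with only the classes $g$ fixed by Frobenius in the cyclotomic action contributing. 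We interpret these summands as the parameter set $\Lambda$ of a Bernoulli process in the sense of \cref{def:Bernoulli process for ideals}. Surjections are recovered from homomorphisms by M\"obius inversion over subgroups, and each proper subgroup contributes a term of strictly smaller order. This is because either $a$ increases or $b(k,H,w)\le b(k,G,w)$ with a smaller leading count, so these terms can be discarded.

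The analytic core is the Euler product with trivial twist. Near $s=1/a$, where $a=a(G,w)$, it behaves like $\zeta_k(as)^{z\,b(k,G,w)}$ times a function holomorphic and nonvanishing in a neighborhood of $\Re s\ge 1/a$, uniformly for $|z-1|$ small. The reason is that, by Chebotarev in $k(\zeta_{\exp G})/k$, the minimal-weight classes fixed by Frobenius contribute on average $b(k,G,w)$ per prime. Every nontrivial twisted Euler product either has a strictly smaller exponent of the pole or is holomorphic at $s=1/a$. This holds because the nontrivial characters are Hecke characters, whose $L$-functions have no pole and no zero on the line. The paper's notion of well-mixed Bernoulli process (\cref{def:well-mixed Bernoulli process}) should encode exactly this dominance of one parameter.

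Applying a Selberg--Delange type Tauberian argument uniformly in $z$ then gives
\[
\sum_{C(\varphi)\le X} z^{\omega(\varphi)} = c(z)\, X^{1/a}(\log X)^{z b - 1}\bigl(1+o(1)\bigr),
\]
with $c(z)$ holomorphic and $c(1)\neq 0$. Taking $z=e^{it/\sqrt{b\log\log X}}$, the characteristic function of the normalized $\omega$ is
\[
\exp\Bigl(b\log\log X\,(e^{it/\sqrt{b\log\log X}}-1) - it\sqrt{b\log\log X}\Bigr)\cdot\frac{c(z)}{c(1)}\bigl(1+o(1)\bigr)\to e^{-t^2/2}.
\]
L\'evy's continuity theorem then yields the stated limit for every interval $I$. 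Equivalently, we could feed the moments of the Bernoulli process into the paper's probabilistic results.

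The main obstacle is uniformity in the finite sum of Euler products. We must show that the twisted summands are genuinely of lower order uniformly for $z$ in a complex neighborhood of $1$. Complex $z$ perturbs the exponents to $zb_\lambda$, so we need $\Re(z)\,b_\lambda < \Re(z)\,b$ with a margin, plus zero-free regions for the relevant Hecke $L$-functions so that the functions $L^{z}$ can be defined. The cases $\Z/q^2\Z$ are where this matters, since the local events there are genuinely dependent. I would first verify, using Wright's explicit formula, that the leading pole order comes only from the trivial twist, and then handle the remaining terms by a contour shift in a Hankel-type region.
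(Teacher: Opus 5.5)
The step that fails is the structural claim your argument rests on. You assert that every nontrivial twist is holomorphic or has strictly smaller pole order at $s=1/a$, that the leading pole comes only from the trivial twist, and that proper subgroups can be discarded. All three are false in general.

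Take $k=\Q$, $G=\Z/4\Z$, $C=|\disc|$, so $a=2$, $b=1$, and the only minimal-weight element is $2\in\Z/4\Z$.
\begin{itemize}
\item The subgroup $N=2\Z/4\Z$ contains that element. Homomorphisms into $N$ (essentially quadratic fields with $D_K^2\le X$) therefore number $\asymp X^{1/2}$, and they enter the M\"obius inversion with $\mu(N,G)=-1$ at full order. A term with ``same $a,b$ but a smaller constant'' changes the leading constant and cannot be discarded.
\item Let $f\in G^\vee$ have order $2$ and take $\beta=2\otimes f\in\O_S^\times\otimes G^\vee={\rm Obs}(\Q,\Z/4\Z,S)$, with $2\in\O_S^\times$. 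This class is nonzero, being the class of $4$ in $\O_S^\times/(\O_S^\times)^4$. Yet $\langle\chi_p,\beta\rangle_p=f(\chi_p(2))=0$ for every minimal-weight local character $\chi_p$, since these take values in $2\Z/4\Z=\ker f$.
\item So on exactly the terms producing the pole, this twist is identically $1$, and there is no nontrivial Hecke character to invoke. By \cref{lem:meromorphic continuation Wright and counting series abelian}(iii) its Wright series has a pole of full order.
\end{itemize}
In general $\Lambda^+$ contains many pairs $(N,\beta)$, and this is unavoidable. If a single Euler product $D_\lambda$ dominated, the limiting probability of ramification above $\eu{P}$ would be $\prod_{\mfp\in\eu{P}}f_\mfp^\lambda(s_0)$. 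The local events would then be asymptotically independent, contradicting the $\Z/q^2\Z$ dependence you yourself cite. For the same reason, well-mixedness does not encode dominance of one parameter. \cref{def:well-mixed Bernoulli process}(ii) exists precisely because several parameters survive, and it only asks that their leading local factors agree up to $O(\NN\mfp^{-1-\alpha})$.

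The missing idea, which repairs your route, is that all full-order summands share the same singular part, so they must be grouped rather than discarded. By \cref{lem:meromorphic continuation Wright and counting series abelian}(iii), for $(N,\beta)\in\Lambda^+$ the subgroup $N$ contains every minimal-weight element, and $\beta$ pairs trivially with minimal-weight local characters at almost all $\mfp$. Hence the minimal-weight part of each $z$-deformed local factor is the same $z\,\nu_{k,G,w,\Frob_\mfp}\NN\mfp^{-as}$. Each such summand is therefore a common singular function (essentially $\zeta_k(as)^{zb}$ times fixed complex powers of Hecke $L$-functions of $k(\zeta_{\exp G})/k$) times a cofactor holomorphic a little to the left of $\Re s=1/a$. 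Your $c(z)$ must then be the $\mu(N,G)/\#{\rm Obs}(k,N,S)$-weighted sum of these cofactors over $\Lambda^+$. In that case $c(1)\neq0$ is not automatic: it is the no-cancellation-of-poles input (Alberts--O'Dorney) used in \cref{lem:meromorphic continuation Wright and counting series abelian}(ii).

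With this correction, your argument closes as follows:
\begin{itemize}
\item The remaining summands have $\zeta_k$-exponent $zb_\lambda$ with integer $b_\lambda\le b-1$, so they are negligible.
\item Since $c(z)/c(1)\to1$ as $z\to1$, your characteristic-function computation goes through.
\item You still need to supply the uniform-in-$z$ continuation of complex powers of the relevant $L$-functions in a zero-free region.
\end{itemize}
That would be a genuinely different proof from the paper's, which never deforms in $z$. The paper uses only $z=1$ input, namely a power-saving count with polynomial dependence on the imposed ramified primes (\cref{prop:quantitative equidistribution for abelian extensions}). It absorbs the several surviving parameters through the law of total cumulance and the pure/mixed cumulant bounds of \cref{sec:probability theory II}.
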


For $C(\varphi) := |\disc(k_\varphi/k)|$, \cref{thm:Erdos Kac abelian counting function} recovers \cref{thm:Erdos Kac abelian disc}.
In fact, similar to Wood's work \cite{WoodLocalAbelianProbabilities} on local probabilities of abelian extensions, as well as the works of Alberts \cite{AlbertsTwistedMalle}, Alberts--O'Dorney \cite{AlbertsODorney}, Darda--Yasuda \cite{DardaYasudaTorsorsFiniteGroupSchemes}, and Tavernier \cite{TavernierRestrictedRamificationAbelianCount}, we are able to prove \cref{thm:Erdos Kac abelian counting function} for all counting functions simultaneously, although the case of the discriminant already presents many of the key challenges: for a general abelian group $G$, the discriminant is neither \textit{fair} in the sense of Wood, nor \textit{balanced} in the sense of Loughran--Santens \cite{LoughranSantensMalleBrauer,LoughranSantensSurvey}.

Let $(k,G,w,C)$ be as at the beginning of \cref{subsec:general results}, and let $\eu{P}$ be a finite set of primes of $k$.
For all $X\ge 10$, the number of $G$-extensions of $k$ with $C$-invariant $\le X$ that are ramified over $\eu{P}$ admits an asymptotic expansion
\[
c_{0,\eu{P}} X^{1/a} (\log X)^{b-1} + c_{1,\eu{P}} X^{1/a} (\log X)^{b-2} + \dotsi + c_{b-1,\eu{P}} X^{1/a} + o(X^{1/a}).
\]
These lower-order terms present an additional difficulty beyond the lack of independence in the leading constant $c_{0,\eu{P}}$ as $\eu{P}$ varies.
The higher-order coefficients $c_{1,\eu{P}},\ldots,c_{b-1,\eu{P}}$ do not exhibit the same kind of ``conditional independence'' as $c_{0,\eu{P}}$, and as far as the author is aware these coefficients have not been studied in the context of local probabilities in number field counting problems.

A strategy we develop for handling the higher-order coefficients is to systematically cancel their contributions to the moments using an inclusion-exclusion process known in the probability literature as the \textit{method of cumulants}.

\subsection{Ideas of the proof}
\label{subsec:idea of proof}

We sketch the proof of \cref{thm:Erdos Kac abelian disc}, since it suggests how to understand the general methods of our paper.
Let $k,G$ be as in \cref{thm:Erdos Kac abelian disc}.

Let $S$ be a finite set of primes of $k$ and $\eu{P}$ a finite set of primes of $k$ avoiding $S$.
We define the formal Dirichlet series
\[
	D_{k,G}^{\eu{P}}(s) := \sum_{\substack{\varphi:\Gamma_k \to G \text{ surjective} \\ \text{ramified above }\eu{P}}} \frac{1}{|\disc(\varphi)|^s}.
\]
The Dirichlet coefficients of $D_{k,G}^{\eu{P}}(s)$ count (up to some constant factor) $G$-extensions of $k$ ordered by discriminant that are ramified above $\eu{P}$.
Let $D_{k,G}(s) := D_{k,G}^{\es}(s)$.

Since $G$ is a nontrivial abelian group, global class field theory (cf.\ \cite{WoodLocalAbelianProbabilities}) implies that $D_{k,G}(s)$ admits a meromorphic continuation to a right half-plane of $\C$, and $D_{k,G}(s)$ contains a pole of order $b = b(k,G,{\rm ind}) \in \N$ at $s_0 := 1/a(G,{\rm ind}) > 0$ with unique largest real part among all poles of $D_{k,G}(s)$ in this region.
Global class field theory also implies that $D_{k,G}(s)$ restricted to $\Re(s) \ge s_0$ can be written as a sum of meromorphic Euler products $(D_\lambda(s))_{\lambda \in \Lambda}$ for some finite indexing set $\Lambda$, the only potential poles of $(D_\lambda(s))_{\lambda \in \Lambda}$ in this region are at $s = s_0$, and the maximum order of these poles is at most $b$.
Thus, in the region $\Re(s) \ge s_0$ we have an equality of meromorphic functions
	\begin{equation}
		\label{eq:mixture for D k G es}
		D_{k,G}(s) = \sum_{\lambda \in \Lambda} D_\lambda(s).
	\end{equation}
	
	Moreover, if $S$ is chosen sufficiently large, then for all primes $\mfp \notin S$ the $\mfp$-local factor $1/(1 - f_\mfp^{\lambda}(s))$ of the Euler product $D_\lambda(s)$ is non-vanishing for all $\Re(s) \ge s_0$.
	This allows us to write for all $\eu{P}$ and $s\in\C$ in the region $\Re(s) \ge s_0$ that
	\begin{equation}
		\label{eq:mixture for D k G Sigma}
		D_{k,G}^{\eu{P}}(s) = \sum_{\lambda \in \Lambda} D_\lambda(s) \prod_{\mfp \in \eu{P}} f_\mfp^\lambda(s),
	\end{equation}
	where the $f_{\mfp}^{\lambda}(s)$ are holomorphic in the region $\Re(s) \ge s_0$ and do \textit{not} depend on $\eu{P}$.
After applying Perron's formula to \cref{eq:mixture for D k G es,eq:mixture for D k G Sigma}, one finds that
\begin{equation}
	\label{eq:mixture probabilities abelian disc}
	\lim_{X\to \infty} \frac{\#\{\varphi \in \Sur(\Gamma_k,G) : |\disc(\varphi)| \le X, \varphi \text{ ramified above }\eu{P}\}}{\#\{\varphi \in \Sur(\Gamma_k,G) : |\disc(\varphi)| \le X\}} = \sum_{\lambda \in \Lambda} w_\lambda \prod_{\mfp \in \eu{P}} f_{\mfp}^{\lambda}(s_0),
\end{equation}
where the $w_\lambda \in \C$ are constant and do \textit{not} depend on $\eu{P}$, and $\sum_{\lambda \in \Lambda} w_\lambda = 1$ (consider $\eu{P} = \es$).

The guiding principle is that the probabilities \cref{eq:mixture probabilities abelian disc} coincide with those of a well-mixed Bernoulli process on the primes of $k$ with parameter set $\Lambda$.
In other words, although the probabilities in \cref{eq:mixture probabilities abelian disc} for disjoint sets of primes $\eu{P}_1, \eu{P}_2$ are generally \textit{not} independent, they nevertheless are \textit{conditionally independent} after conditioning on a parameter $\lambda \in \Lambda$ (we view the complex numbers $(w_\lambda)_{\lambda \in \Lambda}$ as virtual probabilities, since $\sum_{\lambda \in \Lambda} w_\lambda = 1$).
From our perspective, this is the probabilistic interpretation of what it means for $D_{k,G}(s)$ to equal a finite sum of Euler products.

With this insight that the ramification probabilities are constrained by the class of well-mixed Bernoulli processes, it now becomes a purely statistical question to determine whether such Bernoulli random variables are weakly correlated enough that a central limit theorem should hold.

We let $Z \ge 10$ be a parameter, and for each $G$-extension $K/k$ we consider the $Z$-smoothed count of ramified primes
\[
\omega_{Z}(K/k) := \#\{\mfp \subset \O_k \text{ prime}: \mfp \mid \disc(K/k), \NN \mfp \le Z\}.
\]
Using ideas purely from probability theory, we show that the $\lim\limits_{Z \to \infty} \lim\limits_{X \to \infty}$ limit of the random variable $\omega_Z(K/k)$, uniformly sampled from $G$-extensions $K/k$ with $|\disc(K/k)| \leq X$, weakly converges to the standard normal distribution, with density $\frac{1}{\sqrt{2\pi}} e^{-t^2/2}\,dt$, after normalizing $\omega_Z(K/k)$ to have mean $0$ and variance $1$.

In order to arrive at the same conclusion but for the more difficult order of limits $\lim\limits_{X\to\infty} \lim\limits_{Z \to \infty}$, we require a uniform rate of convergence for \cref{eq:mixture probabilities abelian disc} as the set of ramified primes $\mc{P}$ varies, and this is where we use a uniform version of Malle's conjecture with local ramification conditions (\cref{prop:quantitative equidistribution for abelian extensions}).
This particular version of Malle's conjecture for abelian extensions follows from the works of Frei--Loughran--Newton \cite{FreiLoughranNewtonPowerSavingsAbelianExtensions} and Alberts \cite{AlbertsPowerSavingsAbelianExtensions,AlbertsAveragedInputTauberianTheorems}, and finishes our sketch of the proof of \cref{thm:Erdos Kac abelian disc}.

\subsection{A method of cumulants}

In the above proof sketch, we invoked the method of cumulants without explanation.
We now give an informal sketch of this method, developed more rigorously in the probability sections of our paper (\cref{sec:probability theory I,sec:probability theory II}).

For the purposes of our sketch, a \textit{joint cumulant} $\kappa(Y_1,\ldots,Y_n)$ of real-valued random variables $Y_1,\ldots,Y_n$ is a real number obtained from the \textit{joint moments} $(\mathbb{E}[\prod_{i \in I} Y_{i}])_{I \subset \{1,\ldots,n\}}$ via an inclusion-exclusion formula.
Joint cumulants are often easier to work with than the joint moments of a collection of random variables because they are $\R$-multilinear in their arguments, while still retaining complete information about the moments.

Let $(B_p)_{p \text{ prime}}$ be a sequence of Bernoulli random variables, not necessarily independent, but still satisfying certain technical assumptions about mixing.
The scale of our Bernoulli probabilities is such that $\mathbb{E}[B_p] \asymp \frac{1}{p}$.

Let $p_1,p_2,p_3$ be pairwise distinct primes (we use three primes only for illustration).
We say that the \textit{trivial bound} on the joint cumulant of the Bernoulli random variables along $p_1,p_2,p_3$ is an estimate of the form
\[
|\kappa(B_{p_1},B_{p_2},B_{p_3})|
\ll \frac{1}{p_1 p_2 p_3}.
\]
Note that if these random variables were independent, we would have $\kappa(B_{p_1},B_{p_2},B_{p_3}) = 0$.
Summing the trivial bound over all such triples $p_1,p_2,p_3 \le X$ yields an error of size $\asymp (\log\log X)^3$, which exceeds the \textit{CLT bound} $o((\log \log X)^{3/2})$ predicted by a central limit theorem for $\sum_{p\le X} B_p$.
Thus, our goal in the method of cumulants is to overcome the trivial bound for $\kappa(B_{p_1},B_{p_2},B_{p_3})$, as well as all higher joint cumulants of the form $\kappa(B_{p_1},\ldots,B_{p_n})$, $n\ge 3$.

In the limit as $X \to \infty$, the Bernoulli random variables $B_p$ associated to a finite sum of Euler products (in the sense of \cref{subsec:idea of proof}) become \textit{conditionally independent}, and this allows us to prove (using the $\R$-multilinearity of joint cumulants) a bound that is roughly of the form
\[
|\kappa(B_{p_1},B_{p_2},B_{p_3})| \ll 
\frac{1}{(p_1p_2p_3)^{1.01}}.
\]
This agrees with the CLT bound, since the total cumulance of all such triples $p_1,p_2,p_3 \le X$ is an error of size $O(1)$.
On the other hand, for finite values of $X$, the above conditional independence bound is roughly of the form
\begin{equation}
\label{eq:conditional independence bound not strong enough}
|\kappa(B_{p_1},B_{p_2},B_{p_3})| \ll 
\frac{1}{(p_1p_2p_3)^{1.01}} + \frac{1}{p_1 p_2 p_3} \frac{\log p_1 p_2 p_3}{\log X}.
\end{equation}
The above bound applied to all such triples $p_1,p_2,p_3 \le X$ yields an error of size $\asymp (\log \log X)^{2}$, which exceeds the CLT bound.

Removing the limit as $X\to \infty$ and obtaining a CLT bound for finite $X$ is the most technical part of our paper (\cref{subsec:bounding algebraic cumulants}).
The idea is that the factor of $\log p_1p_2p_3 / \log X$ appearing in \cref{eq:conditional independence bound not strong enough} arises from \textit{differentiation} in the Cauchy residue formula for a non-simple pole appearing in a Tauberian-style computation of the joint moments of $B_{p_1},B_{p_2},B_{p_3}$.
By using multilinearity of cumulants to ``force each $p_i$-local factor in the Euler products to be differentiated at least once,'' we obtain a bound that is roughly of the form
\[
|\kappa(B_{p_1},B_{p_2},B_{p_3})| \ll 
\frac{1}{(p_1p_2p_3)^{1.01}} + \frac{1}{p_1 p_2 p_3} \frac{\log p_1}{\log X} \frac{\log p_2}{\log X} \frac{\log p_3}{\log X}.
\]
This agrees with the CLT bound, since the total cumulance of all such triples $p_1,p_2,p_3 \le X$ is an error of size $O(1)$.

\subsection{General Erd\H{o}s--Kac phenomena}
\label{subsec:general EK conjecture}

We expect that the probabilistic methods of our paper can be used to prove Erd\H{o}s--Kac theorems in other settings.
For example, one can ask whether \cref{thm:Erdos Kac abelian disc,thm:Erdos Kac abelian counting function} generalize to arbitrary global fields $k$, finite groups $G$, and counting functions $C$, and in many cases we expect the answer to be yes.

\subsubsection{Relationship with the work of Loughran and Santens}

Loughran and Santens \cite{LoughranSantensMalleBrauer,LoughranSantensSurvey} have conjectured that for any counting function $C$ that is \textit{balanced}, meaning that the minimal positive weight conjugacy classes generate $G$ (cf.\ \cite[Def.\ 8.1]{LoughranSantensMalleBrauer}), the leading constant in Malle's conjecture is equal to a finite sum of Euler products indexed by certain Brauer classes of the classifying stack $BG$.
An explicit prediction for the leading constant in Malle's conjecture for $(k,G,C)$ with finitely many local conditions is implied by their \textit{equidistribution conjecture} \cite[Conj.\ 9.10]{LoughranSantensMalleBrauer}, and this potentially suggests (with further conjectural work required, along the lines of \cite[Conj.\ 8.8]{LoughranSantensSurvey}) that the sequence of ramified primes of $G$-extensions of $k$ ordered by $C$ (i.e.\ the sequence of discriminants) is approximated by a well-mixed Bernoulli process in the sense of \cref{def:reasonable sequence for ideals}(ii).

\subsection{Outline of the paper}
\label{subsec:outline of paper}

In \cref{sec:probability theory I}, we state a probabilistic criterion (\cref{thm:EK criterion for ideals}) for proving Erd\H{o}s--Kac theorems for strongly additive functions on sequences of ideals in a number field.
We also introduce key analytic hypotheses that are necessary input for \cref{thm:EK criterion for ideals}.
In \cref{sec:heights}, we establish a convention for height functions and the ramification type of a prime.
In \cref{sec:Erdos Kac abelian extensions}, we combine the statements of \cref{sec:probability theory I,sec:heights} and a uniform version of Malle's conjecture with local conditions (\cref{prop:quantitative equidistribution for abelian extensions}) to deduce \cref{thm:Erdos Kac abelian counting function}.
Finally, in \cref{sec:probability theory II} we develop our method of cumulants and prove our probabilistic criterion for Erd\H{o}s--Kac theorems (\cref{thm:EK criterion for ideals}).

\subsection{Notation}
\label{subsec:notation}

The set of natural numbers is $\N = \{1,2,3,\ldots\}$.

When a sum, product, or tuple is indexed by the symbol $\mfp$ without further comment, $\mfp$ ranges over the nonzero prime ideals of the ambient Dedekind domain $\O$, and $\NN\mfp$ denotes the absolute norm of $\mfp$ as an ideal, i.e.\ $[\O:\mfp]$.
If $L/K$ is a finite extension of number fields, we often write $|\disc(L/K)|$ for the absolute norm of the discriminant ideal $\disc(L/K)$ in the ring of integers $\O_K$ of $K$.

When $k$ is a number field, we write $\kbar$ for a fixed algebraic closure of $k$ and $\Gamma_k := \Gal(\kbar/k)$ for its absolute Galois group, endowed with the profinite topology.
Finite groups are always given the discrete topology.
Accordingly, whenever $G$ is a finite group, the sets $\Hom(\Gamma_k,G)$ and $\Sur(\Gamma_k,G)$ are understood to consist of continuous homomorphisms and continuous surjections respectively.

If $d\in\N$, $G \leq S_d$ is a transitive permutation group, and $\varphi \in \Sur(\Gamma_k,G)$, we write $k_\varphi/k$ for the finite extension corresponding to the open subgroup $\varphi^{-1}({\rm Stab}_G(1)) \leq \Gamma_k$, $\widetilde{k}_\varphi/k$ for its Galois closure (corresponding to the open normal subgroup $\ker(\varphi)$), and $\disc(\varphi) := \disc(k_{\varphi}/k)$.

For complex-valued functions $f$ and $g$, we write $f = O_{\star}(g)$ or $f \ll_{\star} g$ to denote that there exists an effectively computable constant $C>0$ (depending at most on $\star$) such that in a subset $\mc{U} \subset\C$ that will be clear from context, we have for all $z \in \mc{U}$ that $|f(z)| \le C |g(z)|$.

If $S$ is a set, an \textit{indexed subset} $\msF$ of $S$ means a function
\[
\iota_{\msF}: I_{\msF} \to S
\]
from some set $I_{\msF}$, called the \textit{indexing set} of $\msF$.
When $S$ is implied, we refer to $\msF$ as an \textit{indexed set}.
In practice we suppress the map $\iota_{\msF}$ and simply write $\msF$, while remembering that the same element of $S$ may occur several times with different indices.
Similarly, given $m\in\N$ and an $m$-tuple $T = (s_1,\ldots,s_m)$ of elements of $S$, we often interpret $T$ as an indexed subset of $S$ via
\[
\iota_T:\{1,\ldots,m\} \to S, \qquad 
\iota_T(j) = s_j.
\]

If $T$ is a set and $\msF$ is an indexed subset of $S$, a \textit{function} $\msF \to T$ means a function $I_{\msF} \to T$ that factors through $\iota_{\msF}: I_{\msF} \to S$, and expressions such as
\[
\#\{x \in \msF : \Phi(x)\}
\]
are understood to count indices in $I_{\msF}$ whose elements in $S$ satisfy the property $\Phi$, so multiplicities of $\iota_{\msF}$ are retained.
Similarly, the sum and product expressions
\[
\sum_{x \in \msF} f(x), \qquad 
\prod_{x \in \msF} g(x)
\]
when appropriately well-defined for functions $f,g:S \to \C$ retain the multiplicities of $\iota_\msF$.
When we wish to consider sums or products of complex-valued functions on $S$ indexed by $\msF$ \textit{without multiplicity}, we instead write
\[
\sumflat_{x\in\msF} f(x), \qquad 
\prodflat_{x \in \msF} g(x).
\]
If we write a tuple $(t_{x})_{x \in \msF}$ with $t_x \in T$, this is precisely the data of a function $\msF \to T$, and the tuple retains the multiplicities of $\msF$.
When we wish to consider a tuple of elements of $T$ indexed by $\msF$ \textit{without multiplicity}, we instead write $(t_{x})_{x \in \msF}^{\flat}$.

\subsection*{Acknowledgements}

We would like to thank Brandon Alberts, Christopher Frei, Daniel Loughran, Robert Lemke Oliver, Tim Santens, Julie Tavernier, Frank Thorne, and Melanie Matchett Wood for helpful conversations and/or comments on an earlier draft of this manuscript, Andrew Granville for a helpful conversation that inspired \cref{subsubsec:why b log log X many ramified primes?} and a sharpening of \cref{def:reasonable sequence for ideals}, and Ofir Gorodetsky for originally teaching the author about Billingsley's proof of the classical Erd\H{o}s--Kac theorem.
The author was partially supported by the James Mills Peirce Fellowship at Harvard University, and thanks the Centre de Recherches Mathématiques at the University of Montr\'eal for excellent working conditions, as well as Victoria Miller and Luc Vallée for their hospitality.

The author acknowledges the use of AI tools, including GPT-5.4 by OpenAI, for developing intuition about the scope of the results and for helping organize \cref{sec:probability theory II} into multiple lemmas.
The author is responsible for all statements, proofs, and exposition in this paper.


\section{Probability theory I: ideas and statements}
\label{sec:probability theory I}

In this section, we establish a framework for proving Erd\H{o}s--Kac theorems for sequences of ideals in a fixed number field under the assumption that a sequence of ideals is reasonable in the sense of \cref{def:reasonable sequence for ideals}.
The main output of this section is \cref{thm:EK criterion for ideals}.

\subsection{An Erd\H{o}s--Kac criterion}

Throughout this subsection, we let $k$ be a number field, and $\O_k$ the Dedekind domain corresponding to the ring of algebraic integers in $k$.
Over the natural numbers, a \textit{strongly additive function} \cite{NoteAdditiveFunctions} $\omega:\N \to \C$ is a function that satisfies
\[
\omega(n) = \sum_{\substack{p \text{ prime} \\ p \mid n}} \omega(p).
\]
In order to generalize this notion of a strongly additive function on integers to a strongly additive function on discriminant ideals $\disc(K/k) \subset \O_k$, where $K/k$ is some finite extension, we make the following definition.

\begin{definition}
	We define $\mf{N}_k$ to be the set of nonzero ideals of $\O_k$.
	A function $\omega:\mf{N}_k \to \C$ is \textit{strongly additive} if for all $\mfn \in \mf{N}_k$ one has that
	\[
	\omega(\mfn) = \sum_{\mfp \mid \mfn} \omega(\mfp).
	\]
\end{definition}

\begin{definition}
	Given a strongly additive function $\omega:\mf{N}_k \to \C$, we define its \textit{sup-norm} to be
	\[
	\Vert \omega \Vert_{\rm sup} := \sup_{\mfp} |\omega(\mfp)|.
	\]
\end{definition}

We are interested in the statistical behavior of a strongly additive function $\omega$ along a sequence of ideals in $\mf{N}_k$ ordered by a Northcott height, which we define as follows.

\begin{definition}
	\label{def:ordered sequence of ideals in Ok}
	A \textit{sequence of ideals} $\msF$ in $\mf{N}_k$ is an indexed subset of $\mf{N}_k$.
	A \textit{Northcott height} on $\msF$ is a function $H:\msF \to \R_{>0}$ such that for all $X > 0$ one has that
	\[
	\#\{\mfn \in \msF : H(\mfn) \le X\} < \infty.
	\]
	We refer to the pair $(\msF,H)$ as an \textit{ordered sequence of ideals in $\O_k$}, and for all $X > 0$ we write
	\[
	\msF_H(X) := \{\mfn \in \msF : H(\mfn) \le X\}.
	\]
\end{definition}

It is useful to think of $\mfn \in \msF_H(X)$ as being uniformly random.
If $\omega:\mf{N}_k \to \R$ is strongly additive with bounded sup-norm, and the random ideal $\mfn$ is ``reasonably well-behaved'' (see \cref{def:reasonable sequence for ideals}), then we obtain an Erd\H{o}s--Kac central limit theorem for $\omega(\mfn)$.

\begin{theorem}[Erd\H{o}s--Kac criterion over $\mf{N}_k$]
	\label{thm:EK criterion for ideals}
	Let $k$ be a number field, $\omega:\mf{N}_k \to \R$ a strongly additive function with bounded sup-norm, and $(\msF,H)$ an ordered sequence of ideals in $\O_k$.
	Let $(\msF,H)$ be a reasonable sequence of ideals in $\O_k$ (see \cref{def:reasonable sequence for ideals}).
	For all $X \ge 10$, write
	\[
	\mu_X :=
	\frac{1}{\log\log X}
	\sum_{\mfp} \omega(\mfp)
	\frac{\#\{\mfn \in \msF_H(X) : \mfp \mid \mfn\}}{
		\# \msF_H(X)
	},
	\]
	\[
	V_X :=
	\frac{1}{\log\log X}
	\sum_{\mfp} \omega(\mfp)^2\,
	\frac{\#\{\mfn \in \msF_H(X) : \mfp \mid \mfn\}}{
		\# \msF_H(X)
	}
	\left(
	1 - 
	\frac{\#\{\mfn \in \msF_H(X) : \mfp \mid \mfn\}}{
		\# \msF_H(X)
	}
	\right).
	\]
	Then the following statements hold.
	\begin{enumerate}
		\item[(i)]
	Suppose that $V_X$ converges to $V > 0$ as $X \to \infty$.
	Then for every interval $I \subset \R$ one has that
	\[
		\lim_{X\to\infty} \frac{1}{\#\msF_H(X)} \cdot
		\#\left\{\mfn \in \msF_H(X) : \frac{\omega(\mfn) - \mu_X \log\log X}{\sqrt{V \log\log X}} \in I
		\right\} = 
		\frac{1}{\sqrt{2\pi}} \int_{I} e^{-t^2/2} \,dt.
	\]

	\item[(ii)]
	If $(\msF,H)$ is reasonable with respect to a well-mixed Bernoulli process $\mc{B} = \mc{B}(\Lambda,s_0,S,(\boldsymbol{f}_\mfp(s))_{\mfp \notin S})$ (see \cref{def:Bernoulli process for ideals,def:well-mixed Bernoulli process}), then the limits $\mu = \lim\limits_{X \to \infty} \mu_X$ and $V = \lim\limits_{X\to \infty} V_X$ respectively exist if and only if the following limit formulae for $\mu$ and $V$ respectively exist:
	\[
	\mu = \sum_{\lambda \in \Lambda} w_\lambda(\infty) \lim_{X\to\infty} \frac{1}{\log\log X} \sum_{\substack{\mfp \notin S \\ \NN\mfp \le X}} \omega(\mfp) f_\mfp^\lambda(s_0),
	\]
	\[
	V = \sum_{\lambda \in \Lambda} w_\lambda(\infty) \lim_{X\to\infty} \frac{1}{\log\log X}
	\sum_{\substack{\mfp \notin S \\ \NN\mfp \le X}} \omega(\mfp)^2 f_\mfp^\lambda(s_0).
	\]
	\item[(iii)]
	Assume that $\mu_X$ and $V_X$ converge to $\mu \in \R$ and $V>0$ respectively as $X \to \infty$, as well as
	\[
	\sum_{\lambda \in \Lambda} w_{\lambda}(\infty)
	\sum_{\substack{\mfp \notin S \\ \NN\mfp \le X}} \omega(\mfp) f_\mfp^\lambda(s_0) = 
	\mu \log\log X + o(\sqrt{\log\log X}).
	\]
	Then for every interval $I \subset \R$ one has that
	\[
		\lim_{X\to\infty} \frac{1}{\#\msF_H(X)} \cdot
		\#\left\{\mfn \in \msF_H(X) : \frac{\omega(\mfn) - \mu \log\log X}{\sqrt{V \log\log X}} \in I
		\right\} = 
		\frac{1}{\sqrt{2\pi}} \int_{I} e^{-t^2/2} \,dt.
	\]
	\end{enumerate}
\end{theorem}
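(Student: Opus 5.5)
We prove the three parts by the method of moments, organized through joint cumulants, as sketched in the introduction. Fix $Z = Z(X)$ tending to infinity slowly, say $\log Z = \log X/(\log\log X)^{A}$ for a large constant $A$. Split $\omega(\mfn) = \omega_Z(\mfn) + \omega^{>Z}(\mfn)$, where $\omega_Z$ only counts primes with $\NN\mfp \le Z$. Since $\omega$ has bounded sup-norm, the tail $\omega^{>Z}$ has mean
\[
\ll \sum_{Z < \NN\mfp \le X^{O(1)}} \frac{1}{\NN\mfp} \ll \log\log\log X = o(\sqrt{\log\log X}).
\]
This uses the first-moment bound $\#\{\mfn \in \msF_H(X) : \mfp \mid \mfn\} \ll \#\msF_H(X)/\NN\mfp$ supplied by \cref{def:reasonable sequence for ideals}, together with the fact that $\mfn \in \msF_H(X)$ has only primes of norm $\le X^{O(1)}$ available. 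By Markov's inequality the tail is therefore negligible after normalization. The same bookkeeping shows that replacing $\mu_X,V_X$ by their $Z$-truncations changes them by $o(1/\sqrt{\log\log X})$ and $o(1)$ respectively.

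Write $\omega_Z(\mfn) = \sum_{\NN\mfp\le Z}\omega(\mfp)B_\mfp(\mfn)$, with $B_\mfp = \1_{\mfp\mid\mfn}$. By multilinearity, the $n$-th cumulant is
\[
\kappa_n(\omega_Z) = \sum_{\mfp_1,\dots,\mfp_n} \prod_i \omega(\mfp_i)\,\kappa(B_{\mfp_1},\dots,B_{\mfp_n}).
\]
For Gaussian convergence it suffices to show $\kappa_n = o((\log\log X)^{n/2})$ for every $n\ge3$, with $\kappa_1 = \mu_X\log\log X + o(\sqrt{\log\log X})$ and $\kappa_2 = V\log\log X\,(1+o(1))$.
\begin{itemize}
\item The first two cumulants follow directly from the definitions of $\mu_X$ and $V_X$, up to the pair terms $\mfp_1\ne\mfp_2$ in $\kappa_2$, which are handled by the off-diagonal bound below.
\item Tuples with repeated primes reduce to lower-order cumulants, because $B_\mfp^2 = B_\mfp$.
\item For distinct primes we use the reasonable-sequence hypothesis. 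The joint probability $\mathbb{P}(\mfp_1,\dots,\mfp_m \mid \mfn)$ equals a mixture $\sum_\lambda w_\lambda(X)\prod_j f^\lambda_{\mfp_j}(s_0)$ plus lower-order Tauberian terms and an error uniform in the primes.
\end{itemize}

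The key step, and the main obstacle, is bounding the joint cumulant of distinct primes. The target is
\[
|\kappa(B_{\mfp_1},\dots,B_{\mfp_n})| \ll \prod_i \frac{1}{\NN\mfp_i}\Bigl(\frac{1}{\NN\mfp_i^{\delta}} + \frac{\log\NN\mfp_i}{\log X}\Bigr) + \text{(uniform error)}.
\]
Summing this over $\NN\mfp_i\le Z$ gives $O(1)$ per factor, hence $O(1)$ in total.

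\begin{enumerate}
\item In the $X\to\infty$ limit, conditional independence under each parameter $\lambda$ makes the cumulant a combination of mixture cumulants. Well-mixedness of the $f^\lambda_\mfp$ (the $\lambda$-dependence of $f^\lambda_\mfp(s_0)$ is $O(\NN\mfp^{-1-\delta})$) then gives the first term.
\item For finite $X$, the residue at the order-$\le b$ pole produces derivatives of $\prod_j f^\lambda_{\mfp_j}(s)$, each carrying a factor $(\log X)^{-1}$. I would write these as a polynomial in $\log X^{-1}$ whose coefficients are derivatives distributed over the local factors.
\item By the Leibniz rule, the cumulant's inclusion–exclusion annihilates every term in which some $\mfp_i$-factor is left undifferentiated (and $\lambda$-independent to leading order). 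This is exactly the ``force each local factor to be differentiated'' cancellation. Each differentiated factor contributes $\log\NN\mfp_i/(\NN\mfp_i\log X)$.
\item I expect this combinatorial cancellation, made uniform in $n$ and compatible with the error terms, to be the hardest part. I would first prove it abstractly for formal products $\prod_j(1+x_j)$ in an algebraic probability space, and only then insert the analytic estimates.
\end{enumerate}

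The uniform error must then be summed over the $\ll Z^{n}$ tuples. This forces $Z$ to be a small power of $X$, or more precisely requires a power-saving error, which the reasonable hypothesis provides.

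Part (i) then follows by moment convergence to the normal law, combined with the truncation step. For part (ii), substitute the mixture formula with $m=1$ into $\mu_X$ and $V_X$; the lower-order and $(1 - \text{prob})$ terms contribute $O(\sum_{\mfp} \NN\mfp^{-2}) = O(1)$, and $w_\lambda(X)\to w_\lambda(\infty)$. Part (iii) follows from (i), since the hypothesis allows us to replace $\mu_X\log\log X$ by $\mu\log\log X$ at cost $o(\sqrt{\log\log X})$, once (ii) is used to identify $\mu_X\log\log X$ with the stated sum up to $O(1)$.
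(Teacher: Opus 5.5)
Your architecture matches the paper's: truncate, expand cumulants multilinearly, swap empirical for algebraic cumulants using the uniform error, kill off-diagonal cumulants by conditional independence plus the ``every local factor must be differentiated'' cancellation, then apply the method of moments. There is, however, a genuine gap in your truncation step, and it carries over into your treatment of (ii) and (iii).

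\textbf{The truncation step.} You claim the bound $\#\{\mfn\in\msF_H(X):\mfp\mid\mfn\}\ll\#\msF_H(X)/\NN\mfp$ follows from the reasonability definition for all $\NN\mfp\le X^{O(1)}$. It does not.
\begin{itemize}
\item Condition (ii) gives this bound only up to an error $(\NN\mfp)^{A_1}\exp\bigl(-\log X/(\log\log X)^{1/2-\delta_1}\bigr)$. That error swamps $1/\NN\mfp$ once $\NN\mfp\ge X^{c(\log\log X)^{-1/2+\delta_1}}$.
\item For larger primes the only input is condition (i), the pointwise bound $\#\{\mfp\mid\mfn:\NN\mfp>X^\psi\}\ll 1/\psi$. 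You never use it, and ``only primes of norm $\le X^{O(1)}$'' is not a hypothesis.
\item With your choice $\log Z=\log X/(\log\log X)^{A}$, $A$ large, condition (i) alone bounds the tail only by $(\log\log X)^{A}$. This is not $o(\sqrt{\log\log X})$ once $A\ge 1/2$.
\item Condition (ii) is not power-saving, so $Z$ cannot be a fixed small power of $X$ either. Summing its error over $\ll Z^{m}$ tuples forces $\psi\le\frac{1}{m(A_m+1)}(\log\log X)^{-1/2+\delta_m}$.
\end{itemize}
The two hypotheses therefore leave a narrow window: $1/\psi=o(\sqrt{\log\log X})$ and $\psi\ll_m(\log\log X)^{-1/2+\delta_m}$. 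The paper takes $\psi\approx\log\log\log X/\sqrt{\log\log X}$ inside this window and handles everything above $X^{\psi}$ pointwise via condition (i).

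Your argument can be repaired with a two-scale split. Above $X^{\psi_1}$, with $\psi_1$ as in the paper, use condition (i) pointwise. Run your Markov/first-moment argument only on $(Z,X^{\psi_1}]$. On that range condition (ii) with $m=1$ does give the needed bound $\ll 1/\NN\mfp$, and the contribution is $O(\log\log\log X)$.

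\textbf{Consequences for (ii) and (iii).} Both $\mu_X$ and $V_X$ sum over all primes. Substituting the $m=1$ mixture formula therefore only handles $\NN\mfp\le X^{\psi}$. The remaining primes cost $O(1/\psi)$ via condition (i), not $O(1)$. Part (iii) works with its $o(\sqrt{\log\log X})$ hypothesis precisely because $1/\psi=o(\sqrt{\log\log X})$. Separately, the Tauberian lower-order terms contribute $\sum_{\NN\mfp\le X}\frac{\log\NN\mfp}{\NN\mfp\log X}=O(1)$, not $O(\sum\NN\mfp^{-2})$.

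\textbf{A smaller point.} $B_\mfp^2=B_\mfp$ reduces moments, not cumulants. So tuples that repeat primes but contain at least two distinct primes still need the off-diagonal bound. The paper handles this by keeping multiplicities as higher $t_\mfp$-derivatives in the conditional cumulant generating function, so the pure cumulant bound is a product over the distinct primes only.
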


For example, when $k = \Q$, $\mf{N}_{\Q} \cong \N$, $\msF \subset \N$, and $H(n) = n$ is the ordinary height on positive integers, then \cref{thm:EK criterion for ideals} establishes a criterion for proving that a strongly additive function $\omega:\N\to\R$ satisfies a central limit theorem.

In the remainder of this section, we explain the definition of reasonability appearing in \cref{thm:EK criterion for ideals}, and postpone the proof of the theorem until \cref{sec:probability theory II}.

\subsection{Algebraic probability theory: motivation}

We find that the natural counting tools from analytic number theory -- namely Perron's formula and Tauberian theorems -- are naturally formalized using algebraic (rather than classical) probability spaces \cite{NicaSpeicherLecturesOnFreeProbability}.
The reason is simple: given a countable collection of arithmetic objects $\Omega$ and a function $H:\Omega \to \R_{\ge 1}$ (with $H^{-1}([1,10])$ nonempty), the \textit{expected value} of an arithmetic statistic $F:\Omega\to\C$ up to height $X \in \R_{\ge 10} \backslash H(\Omega)$, which is given by
\[
\mathbb{E}_{\Omega,H,X}[F] :=
\frac{\sum_{x\in\Omega, H(x) \le X} F(x)}{\sum_{x\in\Omega, H(x) \le X} 1},
\]
is often better understood in terms of \textit{Perron's formula}:
\[
\frac{\sum_{x\in\Omega, H(x) \le X} F(x)}{\sum_{x\in\Omega, H(x) \le X} 1} =
\lim_{T\to\infty}
\frac{\int_{2-iT}^{2+iT} \widehat{F}(s) X^s/s \,ds}{\int_{2-iT}^{2+iT} \left(\sum_{x\in \Omega} H(x)^{-s}\right) X^s/s \,ds},
\]
where $\widehat{F}(s)$ is the \textit{Mellin transform} of $F(x)$, given by
\[
\widehat{F}(s) := 
\sum_{x\in\Omega} F(x) H(x)^{-s}.
\]
Here we have assumed that both $\zeta_{\Omega,H}(s) := \sum_{x\in \Omega}H(x)^{-s}$ and $\widehat{F}(s)$ converge absolutely for all $s\in\C$ with $\Re(s) > 1$.
Moreover, if $\zeta_{\Omega,H}(s)$ and $\widehat{F}(s)$ admit meromorphic continuations to $\Re(s)\ge 1$ and have unique poles at $s=1$ in this region, then we often expect that applying a Tauberian theorem should yield
\[
\frac{\sum_{x\in\Omega, H(x) \le X} F(x)}{\sum_{x\in\Omega, H(x) \le X} 1} = 
\frac{\underset{s=1}{\Res} \Big(\widehat{F}(s) X^s/s\Big)}{\underset{s=1}{\Res}\Big(\zeta_{\Omega,H}(s) X^s/s\Big)} + 
\text{error}.
\]

From the perspective of trying to compute an arithmetic-statistics limit $c := \lim\limits_{X \to \infty} \mathbb{E}_{\msF,H,X}[F]$ (if it exists), it makes sense to restrict our analysis to the class of functions $F:\Omega \to \C$ for which
\[
\widehat{F}(s) = f(s) \, \zeta_{\Omega,H}(s)
\]
where $f(s)$ is holomorphic in the region $\Re(s) \ge 1$, in which case one expects that $f(1) = c$.
We refer to $f(s)$ as the \textit{normalized Mellin transform} of $F(x)$.
In particular, we write
\[
\frac{\underset{s=1}{\Res} \Big(\widehat{F}(s) X^s/s\Big)}{\underset{s=1}{\Res}\Big(\zeta_{\Omega,H}(s) X^s/s\Big)} = 
\frac{\underset{s=1}{\Res} \Big(f(s)\, \zeta_{\Omega,H}(s) X^s/s\Big)}{\underset{s=1}{\Res}\Big(\zeta_{\Omega,H}(s) X^s/s\Big)}
\]
and refer to this ratio as the \textit{algebraic expectation} of $f(s)$, which we denote $\widehat{\mathbb{E}}_{\zeta,X}[f]$.

We expect that the expected value of an arithmetic statistic $F:\Omega\to\C$ is closely approximated by the algebraic expectation of the normalized Mellin transform of $F$ as $X\to \infty$:
\[
\mathbb{E}_{\Omega,H,X}[F] =
\widehat{\mathbb{E}}_{\zeta,X}[f] + {\rm error}.
\]
Moreover, if $\zeta_{\Omega,H}(s)$ has a pole of order $b\in\N$ at $s=1$, then the Cauchy residue theorem yields
\[
\mathbb{E}_{\Omega,H,X}[F] =
c \cdot \frac{P_{f,\zeta}(\log X)}{Q_{\zeta}(\log X)} + {\rm error},
\]
where $P_{f,\zeta}(t),Q_{\zeta}(t) \in \C[t]$ are degree $b$ monic polynomials (at least when $c\ne0$, which is the case relevant for our applications).
In particular, if the error term decays faster than any power of $\log X$ as $X\to \infty$, for example $O\left(e^{-\varepsilon\sqrt{\log X}}\right)$, then the rate of convergence of $\mathbb{E}_{\Omega,H,X}[F] \to c$ is explicitly determined by the algebraic expectation of the normalized Mellin transform of $F$.

We prove \cref{thm:EK criterion for ideals} using this explicit polynomial structure in the algebraic expectation operator $\widehat{\mathbb{E}}_{\zeta,X}[f]$, which is a $\C$-linear functional on the algebra of holomorphic germs at $s=1$ that is \textit{unital}, meaning that $\widehat{\mathbb{E}}[1] = 1$.
These properties arise more generally for algebraic probability spaces, where many of the features of classical probability theory, such as linearity of expectation, conditioning, and moments, still make sense.

Algebraic probability spaces are a standard idea used in the study of free probability and operator algebras; see for example \cite{NicaSpeicherLecturesOnFreeProbability}.
For our applications, we do not impose the condition that probabilities are non-negative real numbers, as we often decompose the algebraic analog of a counting measure into complex measures using character orthogonality relations, hence we allow ourselves to work with complex (or \textit{virtual}) probabilities.

\subsection{Algebraic probability theory: definitions}
\label{subsec:algebraic probability theory}

We now define the objects used throughout the probabilistic arguments of the paper.

\begin{definition}
	A \textit{(virtual, commutative) algebraic probability space} $\mc{A}$ is an associative commutative unital $\C$-algebra $\mc{A}_0$ with a linear functional $\widehat{\mathbb{E}} : \mc{A}_0 \to \C$ that is unital ($\widehat{\mathbb{E}}[1] = 1$).
\end{definition}

\begin{remark}
	We refer to elements of an algebraic probability space as random variables, and we refer to $\widehat{\mathbb{E}}$ as an expectation operator.
\end{remark}

\begin{definition}
	\label{def:Tauberian mixture for ideals}
	A \textit{Tauberian pre-mixture} $\mc{T}$ consists of the following data:
	\begin{itemize}
		\item a finite set $\Lambda = \Lambda(\mc{T})$, which we call the \textit{set of parameters};
		\item a positive real number $s_0 = s_0(\mc{T}) > 0$;
		\item a natural number $b(\mc{T}) \in \N$;
		\item for each $\lambda \in \Lambda$, a formal Dirichlet series $D_\lambda(s) = D_\lambda(\mc{T};s)$.
	\end{itemize}
	Let $k$ be a number field.
	A Tauberian pre-mixture $\mc{T}$ is a \textit{Tauberian mixture over $k$} if its data satisfy the following properties:
	\begin{itemize}
		\item for each $\lambda \in \Lambda(\mc{T})$, the formal Dirichlet series $D_\lambda(\mc{T};s)$ is absolutely convergent (hence holomorphic) in the region $\Re(s) > s_0(\mc{T})$;
		\item $D_\lambda(s)$ admits a meromorphic continuation to the region $\Re(s) \ge s_0$, and the only allowed singularity of $D_{\lambda}(s)$ in this region is a pole of order at most $b(\mc{T})$ at $s = s_0$;
		\item there is \textit{no cancellation of poles}, which states that
		\[
		\sum_{\lambda \in \Lambda} D_\lambda(s)
		\]
		has a pole of order $b$ at $s = s_0$.
	\end{itemize}
\end{definition}

\begin{construction}[Algebraic probability space of a Tauberian mixture]
	\label{construction:algebraic probability space associated to Tauberian mixture}
	Let
	\[
	\mc{T} = \mc{T}(\Lambda,s_0,b,(D_\lambda)_{\lambda \in \Lambda})
	\]
	be a Tauberian mixture, and $X_0 = X_0(\mc{T}) \ge 10$ a sufficiently large parameter (depending on $\mc{T}$) so that the following construction is well-defined.
	For every $X \ge X_0$, the \textit{algebraic probability space $\mc{A}(\mc{T}) = (\mc{A}_0(\mc{T}),\widehat{\mathbb{E}}_{\mc{T},X})$ associated to a Tauberian mixture $\mc{T}$} is constructed as follows.
	The associative commutative unital $\C$-algebra is given by
	\[
	\mc{A}_0(\mc{T}) := \bigoplus_{\lambda \in \Lambda(\mc{T})} \mc{H}_{\lambda},
	\]
	where each $\mc{H}_{\lambda}$ is a copy of the algebra of holomorphic germs based at $s = s_0(\mc{T})$.
	We write an element of $\mc{A}_0(\mc{T})$ as a $\Lambda$-tuple of holomorphic germs $\boldsymbol{f}(s) = (f^{\lambda}(s))_{\lambda \in \Lambda}$.
	The subalgebra of locally constant germs $\C^{\Lambda} \subset \mc{A}_0(\mc{T})$ has a unital $\C$-linear \textit{conditional expectation operator}
	\[
	\widehat{\mathbb{E}}_{\mc{T},X}[-\mid\Lambda] : \mc{A}_0(\mc{T}) \to \C^{\Lambda}
	\]
	that is defined for each parameter $\lambda \in \Lambda$ via (see \cref{rem:convention for measure zero conditional expectation})
	\[
	\widehat{\mathbb{E}}_{\mc{T},X}[\boldsymbol{f}(s) \mid \lambda] := 
	\begin{cases}
		\dfrac{\underset{s = s_0}{\Res}\Big( f^\lambda(s) D_\lambda(s) X^s/s \Big)}{\underset{s = s_0}{\Res}\Big( D_\lambda(s) X^s/s \Big)} & D_{\lambda}(s) \text{ has a pole at } s=s_0, \\
		f^{\lambda}(s_0) & \text{otherwise},
	\end{cases}
	\]
	where $\underset{s = s_0}{\Res}$ denotes the complex-analytic residue at $s = s_0$.
	Finally, we define the unital $\C$-linear \textit{expectation operator}
	\[
	\widehat{\mathbb{E}}_{\mc{T},X}[\boldsymbol{f}(s)] :=
	\frac{\underset{s = s_0}{\Res}\Big(\sum_{\lambda \in \Lambda} f^{\lambda}(s) D_\lambda(s) X^s/s \Big)}{\underset{s = s_0}{\Res}\Big( \sum_{\lambda \in \Lambda} D_{\lambda}(s) X^s/s \Big)}.
	\]
	The operator $\widehat{\mathbb{E}}_{\mc{T},X}$ satisfies the \textit{law of total expectation}:
	\[
	\widehat{\mathbb{E}}_{\mc{T},X}[\boldsymbol{f}(s)] = \sum_{\lambda \in \Lambda}  \widehat{\mathbb{E}}_{\mc{T},X}[\boldsymbol{f}(s) \mid \lambda] \cdot 
	\frac{\underset{s = s_0}{\Res}\Big( D_\lambda(s) X^s/s \Big)}{\underset{s = s_0}{\Res}\Big( \sum_{\lambda' \in \Lambda} D_{\lambda'}(s) X^s/s \Big)}.
	\]
\end{construction}

\begin{remark}
	\label{rem:residue is a polynomial in X}
	For $X_0 = X_0(\mc{T}) \ge 10$ sufficiently large, $\lambda \in \Lambda(\mc{T})$, and $X\ge X_0$ the complex numbers
	\[
	w_{\lambda}(X) := \frac{\underset{s = s_0}{\Res}\Big( D_\lambda(s) X^s/s \Big)}{\underset{s = s_0}{\Res}\Big( \sum_{\lambda' \in \Lambda} D_{\lambda'}(s) X^s/s \Big)}
	\]
	are well-defined and satisfy $\sum_{\lambda \in \Lambda} w_\lambda(X) = 1$.
	Moreover, one has that the limits $w_{\lambda}(\infty) := \lim\limits_{X\to\infty} w_\lambda(X)$ exist because $\mc{T}$ has no cancellation of poles.
	In fact, the rate of convergence is $O(1/\log X)$ because the numerator/denominator of the above expression can be written as $X^{s_0(\mc{T})}$ times a polynomial of degree at most/exactly $b(\mc{T})-1$ in the variable $\log X$.
\end{remark}

\begin{remark}
	\label{rem:convention for measure zero conditional expectation}
	If $\lambda \in \Lambda$ is such that $D_{\lambda}(s)$ does \textit{not} have a pole at $s = s_0$, then conditioning on $\lambda$ behaves like conditioning on a measure zero event in classical probability theory.
	In particular, the convention of defining $\widehat{\mathbb{E}}_{\mc{T},X}[\boldsymbol{f}(s) \mid \lambda] := f^{\lambda}(s_0)$ when $D_{\lambda}(s)$ does not have a pole at $s=s_0$ is arbitrary, and does \textit{not} play a role in our analysis of algebraic probabilities, expectations, moments, or cumulants used to prove the results in this paper.
\end{remark}

\begin{definition}
	Given a Tauberian mixture $\mc{T}$ over a number field $k$, we define the set
	\[
	\Lambda^+(\mc{T}) := \{\lambda \in \Lambda(\mc{T}) : \lim_{X \to \infty} w_\lambda(X) \ne 0\}.
	\]
\end{definition}

\subsubsection{The Bernoulli random variables in our algebraic probability space}

Given a Tauberian mixture $\mc{T}$ over $k$, we would like to assign for all but finitely many primes $\mfp$ an element $\boldsymbol{f}_\mfp(s) \in \mc{A}_0(\mc{T})$ that approximates the normalized Mellin transform of the indicator function of some local event at $\mfp$, such as $\mfp$ ramifying in a $G$-extension of $k$ for some finite group $G$.
Thus, $\boldsymbol{f}_\mfp(s)$ should be thought of as a Bernoulli random variable at the prime $\mfp$.
A collection of Bernoulli random variables is often called a Bernoulli process, and we give an algebraic probability definition as follows.

\begin{definition}[Bernoulli process]
	\label{def:Bernoulli process for ideals}
	Let $k$ be a number field.
	A \textit{Bernoulli process over $k$}, denoted $\mc{B}$, is the following tuple of data:
	\begin{itemize}
		\item $S = S(\mc{B})$ is a finite set of primes of $k$;
		\item $\mc{T} = \mc{T}(\mc{B})$ is a Tauberian mixture over $k$;
		\item $X_0 = X_0(\mc{T}) \ge 10$ is a valid parameter in \cref{construction:algebraic probability space associated to Tauberian mixture};
		\item for every prime $\mfp \notin S$ of $k$, $\boldsymbol{f}_{\mfp}(s) = \boldsymbol{f}_{\mfp}(\mc{B};s)$ is an element of $\mc{A}_0(\mc{T})$, which we refer to as the \textit{Bernoulli random variable associated to $\mfp$}.
	\end{itemize}
\end{definition}

\subsubsection{The moments of our random variables}

The point of packaging all of the data of our random model into the definition of a Bernoulli process $\mc{B}$ is so that we can clearly state all of the dependencies going into the following definition of the moments of our random variables.

\begin{definition}
	Let $k$ be a number field, and
	\[
	\mc{B} =
	\mc{B}\Big(S,\mc{T}\Big(\Lambda,s_0,b,(D_\lambda(s))_{\lambda \in \Lambda}\Big),X_0,(\boldsymbol{f}_{\mfp}(s))_{\mfp \notin S}\Big)
	\]
	a Bernoulli process over $k$.
		For every $X \ge X_0$ and tuple $\eu{P} = (\mfp_1,\ldots,\mfp_m)$ of primes of $k$ avoiding $S$ we define the \textit{algebraic $\eu{P}$-moment}
		\begin{equation}
		\label{eq:algebraic Sigma moment}
		\widehat{M}_{\mc{B},X}(\eu{P}) :=
		\widehat{\mathbb{E}}_{\mc{T},X}\left[\prodflat_{\mfp \in \eu{P}}\boldsymbol{f}_{\mfp}(s)\right],
		\end{equation}
		and for every $\lambda \in \Lambda$ we also define the \textit{$\lambda$-conditional moment}
		\[
		\widehat{M}_{\mc{B},X}(\eu{P} \mid \lambda) := \widehat{\mathbb{E}}_{\mc{T},X}\left[\prodflat_{\mfp \in \eu{P}}\boldsymbol{f}_{\mfp}(s) 
		\, \Bigg| \, \lambda\right].
		\]
	(Recall our convention for products over tuples in \cref{subsec:notation}.)
\end{definition}

\subsection{Reasonability}
\label{subsec:reasonability}

Our discussion in \cref{subsec:algebraic probability theory} ultimately led us to produce a collection of moments $(\widehat{M}_{\mc{B},X}(\eu{P}))_{\eu{P}}$ associated to a Bernoulli process $\mc{B}$ representing a collection of Bernoulli random variables $f_\mfp(\mc{B};s)$ indexed by primes $\mfp \notin S(\mc{B})$ of our number field $k$.
In our application, each Bernoulli random variable $f_\mfp(\mc{B};s)$ corresponds to the event that $\mfp$ divides a random discriminant $\disc(K/k) \in \mf{N}_k$, where $K/k$ is a random $G$-extension.
This motivates the following definition of a reasonable sequence of ideals of $\O_k$, i.e.\ one where we provably obtain a central limit theorem for the number of prime divisors via \cref{thm:EK criterion for ideals}.

\begin{definition}[Well-mixed Bernoulli process]
	\label{def:well-mixed Bernoulli process}
	Let $k$ be a number field and $\mc{B}$ a Bernoulli process over $k$.
	We say that $\mc{B}$ is \textit{well-mixed} if the following two conditions hold.
	\begin{enumerate}
		\item[(i)] (Mixture uniformity) One has for every $\lambda \in \Lambda(\mc{T})$ and $r \in \Z_{\ge 0}$ that
		\[
		\frac{d^r}{ds^r} f_\mfp^\lambda(s) \bigg|_{s=s_0}
		\ll_r \frac{(\log \NN\mfp)^r}{\NN\mfp}.
		\]
		\item[(ii)] (Leading order conditional probabilities are similar) There exists a positive constant $\alpha>0$ such that the following holds.
		For all $\mfp \notin S$ and $\lambda,\lambda' \in \Lambda^+(\mc{T})$ one has that
		\[
		|f_\mfp^\lambda(s_0) - f_\mfp^{\lambda'}(s_0)| \ll
		\frac{1}{(\NN\mfp)^{1+\alpha}}.
		\]
	\end{enumerate}
\end{definition}

\begin{definition}[Reasonable ordered sequence of ideals]
	\label{def:reasonable sequence for ideals}
	Let $k$ be a number field, and $(\msF,H)$ an ordered sequence of ideals in $\O_k$.
	We say that $(\msF,H)$ is \textit{reasonable} if the following two conditions hold.
	\begin{itemize}
		\item[(i)] (Large prime tails are sparse) For all $X \ge 10$ sufficiently large, $0 < \psi \le 1$ (possibly depending on $X$), and $\mfn \in \msF_H(X)$ one has that
		\[
		\#\{\mfp \subset \O_k \textnormal{ prime} : \mfp \mid \mfn,\ \NN\mfp > X^\psi\}
		\ll_{\msF,H} \frac{1}{\psi}.
		\]
		\item[(ii)] (Approximation by $\mc{B}$) There exists a well-mixed Bernoulli process $\mc{B}$ such that the following holds.
		For each $m\in\N$, there exist positive constants $A_m,\delta_m > 0$ such that for every tuple $\eu{P} = (\mfp_1,\ldots,\mfp_m)$ of primes of $k$ avoiding $S(\mc{B})$ and $X \ge X_0(\mc{B})$ one has that
		\[
		\frac{\#\{\mfn \in \msF_H(X) : \mfp_1,\ldots,\mfp_m \text{ divide } \mfn\}}{\#\msF_H(X)}
		= \widehat{M}_{\mc{B},X}(\eu{P}) + O_{\msF,H,\mc{B}}\left(\prod_{\mfp \in \eu{P}}(\NN\mfp)^{A_m} \cdot \exp\left(
			- \frac{\log X}{(\log \log X)^{1/2-\delta_m}}
		\right)
		\right).
		\]
	\end{itemize}
\end{definition}

\section{Heights and ramification type}
\label{sec:heights}

In this section, we develop our notation for counting number fields.
Much of the terminology is borrowed from \cite{LoughranSantensMalleBrauer,LoughranSantensSurvey}, although there are small technical differences in our jargon.

\begin{notation}
	Throughout this section, we fix a number field $k$, a nontrivial finite group $G$, and for every place $v$ of $k$ a choice of decomposition group $D_v \subset \Gamma_k$.
	For every non-archimedean place $v$, there is also the inertia subgroup $I_v \subset D_v$ with tame quotient $I_v \twoheadrightarrow I_v^{\rm tame}$.
	In this case, there is a canonical identification $I_v^{\rm tame} \cong \widehat{\Z}(1)$ where $\widehat{\Z}(1) := \varprojlim_n \mu_n$.
\end{notation}

\subsection{Heights}

\begin{definition}
	\label{def:weight function}
	A \textit{weight function} $w:G \to \Z_{\ge 0}$ is a class function such that $w(g) = 0$ if and only if $g$ is the identity element of $G$, and $w(g^e) = w(g)$ for all $e \in (\Z/\exp(G)\Z)^\times$.
\end{definition}

\begin{definition}
	\label{def:Malle's a and b constants}
	Let $w:G\to\Z_{\ge 0}$ be a weight function.
	We define \textit{Malle's $a$-constant} $a(G,w) \in \N$ to be the minimal positive value attained by $w$.
	We define \textit{Malle's $b$-constant} $b(k,G,w) \in \N$ to be the number of orbits of conjugacy classes $c \subset G$ satisfying $w|_c = a(G,w)$ under the action $c \mapsto c^{{\rm cyc}(\sigma)}$ where ${\rm cyc}:\Gal(k(\zeta_{\exp(G)})/k) \to (\Z/\exp(G)\Z)^\times$ is the cyclotomic character.
\end{definition}

\begin{definition}
	\label{def:height function}
	A \textit{height function $H:\Hom(\Gamma_k,G) \to \R_{>0}$} with weight $w:G \to \Z_{\ge 0}$ is a function that can be constructed as follows:
	\begin{itemize}
		\item choose a finite set of places $T$ of $k$ containing all places dividing $|G|\infty$;
		\item for every $v \in T$, choose an arbitrary function $H_v:\Hom(D_v,G) \to \R_{> 0 }$;
		\item for every place $v\notin T$, define $H_v : \Hom(D_v,G) \to \R_{>0}$ via
		\[
		H_v(\varphi_v) := (\NN v)^{w(\varphi_v(\sigma_v))}
		\]
		where $\sigma_v$ is an arbitrary generator of the tame quotient of local inertia at $v$;
		\item finally, set
		\[
		H(\varphi) := \prod_v H_v(\varphi|_{D_v})
		\]
		where $v$ varies over all places of $k$.
	\end{itemize}
	Because $w$ is invariant under all invertible powers, the local factor $H_v(\varphi_v)$ for $v \notin T$ is independent of the choice of generator $\sigma_v$.
	The minimal subset $T$ that can be used to construct $H$ is called the \textit{exceptional set} of $H$, and is denoted ${\rm Exc}(H)$.
\end{definition}

\begin{remark}
	In the terminology of \cite{LoughranSantensMalleBrauer,LoughranSantensSurvey}, our height functions are precisely the \textit{$\widehat{\Z}^\times$-invariant} height functions.
\end{remark}

\begin{remark}
	Wood's notion of a counting function \cite[\S2.1]{WoodLocalAbelianProbabilities} extended to non-abelian groups essentially coincides with our notion of a height function.
\end{remark}

\begin{remark}
	A height function $H$ can be reconstructed from the data of: its weight $w$, the exceptional set ${\rm Exc}(H)$, and the auxiliary local functions $H_v$ for all $v\in {\rm Exc}(H)$.
	When we say that ``$H$ is a height function,'' we often implicitly remember this data.
\end{remark}

\subsection{Ramification type}

\begin{definition}
	\label{def:ramification above Sigma}
	Let $\varphi \in \Sur(\Gamma_k,G)$, and $\eu{P}$ a finite set of primes of $k$.
	We say that $\varphi$ is \textit{ramified above $\eu{P}$} if for all $\mfp \in \eu{P}$ the restriction of $\varphi$ to the inertia subgroup $I_{\mfp} \subset D_{\mfp} \subset \Gamma_{k}$ is nontrivial.
\end{definition}

\begin{remark}
	If the finite set of primes $\eu{P}$ avoids all primes dividing $|G|$, then $\varphi$ is ramified above $\eu{P}$ if and only if, for every $\mfp \in \eu{P}$ and topological generator $\sigma_{\mfp}$ of $I_{\mfp}^{\rm tame}$, the element $\varphi(\sigma_{\mfp}) \in G$ is nontrivial.
\end{remark}

\begin{definition}
	\label{def:sectors}
	A \textit{sector $\mf{S}$ of $G$ defined over $k$}, or simply a \textit{ramification type}, is a subset of the $\Gamma_k$-set $G(-1) := \Hom(\widehat{\Z}(1),G)$ that is nonempty and minimal with respect to the property of being closed under $G$-conjugation and closed under the $\Gamma_k$-action.
	We let $\mc{S}(k,G)$ denote the set of $k$-sectors of $G$, and $\mc{S}^*(k,G) := \mc{S}(k,G)\backslash\{1\}$ is the set of \textit{nontrivial $k$-sectors of $G$}.
\end{definition}

\begin{definition}
	\label{def:ramification type}
	Let $\varphi \in \Sur(\Gamma_k,G)$, $\mf{S} \in \mc{S}^*(k,G)$, and $\mfp$ a prime of $k$ that does not divide $|G|$.
	We say that $\varphi$ has \textit{ramification type $\mf{S}$ above $\mfp$} if $\varphi$ restricted to the inertia subgroup $I_{\mfp}$ factors through an element of $\mf{S} \subset G(-1)$ using the canonical identification $I_{\mfp}^{\rm tame} \cong \widehat{\Z}(1)$.
\end{definition}

\begin{definition}
	\label{def:minimal weight sectors}
	Let $w:G \to \Z_{\ge 0}$ be a weight function.
	We define the set of \textit{minimal weight $k$-sectors of $G$ with respect to $w$} to be
	\[
	\mc{MS}(k,G,w) :=
	\left\{
	\mf{S} \in \mc{S}^*(k,G) :
	w(g) = a(G,w) \ \forall g \in \mf{S}
	\right\}.
	\]
\end{definition}

\begin{remark}
	\label{rem:number of sectors is Malle b}
	The quantity $\#\mc{MS}(k,G,w)$ is equal to the number of cyclotomic orbits of minimal weight conjugacy classes, hence $\#\mc{MS}(k,G,w) = b(k,G,w)$.
\end{remark}

\section{Erd\H{o}s--Kac for abelian extensions}
\label{sec:Erdos Kac abelian extensions}

In this section, we use previous results on Malle's conjecture for abelian extensions due to Wright \cite{WrightAbelianExtensions}, Wood \cite{WoodLocalAbelianProbabilities}, Frei--Loughran--Newton \cite{FreiLoughranNewtonPowerSavingsAbelianExtensions}, and Alberts \cite{AlbertsPowerSavingsAbelianExtensions,AlbertsAveragedInputTauberianTheorems} in order to deduce a ``uniform equidistribution theorem'' for abelian extensions (\cref{prop:quantitative equidistribution for abelian extensions}).
We use these results in tandem with \cref{thm:EK criterion for ideals} in order to obtain an Erd\H{o}s--Kac theorem (\cref{thm:Erdos Kac abelian extensions}) that recovers \cref{thm:Erdos Kac abelian counting function}.

\begin{theorem}[Erd\H{o}s--Kac for abelian extensions]
	\label{thm:Erdos Kac abelian extensions}
	Let $k$ be a number field, $G$ a nontrivial finite abelian group, and $H:\Hom(\Gamma_k,G) \to \R_{>0}$ a height function with weight $w$.
	Let $\omega:\Sur(\Gamma_k,G) \to \R$ denote the function
	\[
	\omega(\varphi) := \#\{\mfp \subset \O_k \textnormal{ prime} : \varphi \textnormal{ ramified above }\mfp\}.
	\]
	For every interval $I \subset \R$ one has that
	\begin{multline*}
		\lim_{X\to\infty}
		\frac{1}{\#\{\varphi: H(\varphi) \leq X\}} \cdot
		\#\left\{
		\varphi : H(\varphi) \leq X, \frac{\omega(\varphi) - b(k,G,w) \log\log X}{\sqrt{b(k,G,w) \log\log X}} \in I
		\right\} \\ = 
		\frac{1}{\sqrt{2\pi}} \int_I e^{-t^2/2} \,dt,
	\end{multline*}
	where $\varphi$ varies over $\Sur(\Gamma_k,G)$.
\end{theorem}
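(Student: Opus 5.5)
We will deduce the theorem from \cref{thm:EK criterion for ideals}(iii), applied to the ordered sequence $\msF$ of ideals $\mfn_\varphi := \prod_{\mfp \text{ ramified in } \varphi} \mfp$ indexed by $\varphi \in \Sur(\Gamma_k,G)$. The height is $H(\varphi)$, which is Northcott because only finitely many $G$-extensions have bounded height. We take $\omega$ to be the strongly additive function with $\omega(\mfp)=1$ for all $\mfp$, so that $\omega(\mfn_\varphi)$ counts the ramified primes of $\varphi$. The work splits into three parts: check reasonability, compute $\mu$ and $V$, and verify the $o(\sqrt{\log\log X})$ hypothesis of part (iii).

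\textbf{Reasonability.} Condition (i) of \cref{def:reasonable sequence for ideals} is elementary. Fix $\varphi$ with $H(\varphi)\le X$. Every ramified $\mfp\notin{\rm Exc}(H)$ contributes a factor $(\NN\mfp)^{w(\cdot)}\ge \NN\mfp$ to $H(\varphi)$, and the local factors at the finitely many exceptional places are bounded below. Hence the number of ramified $\mfp$ with $\NN\mfp>X^\psi$ is at most $\log(X\cdot O(1))/(\psi\log X)\ll 1/\psi$.

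Condition (ii) is the heart of the argument and rests on \cref{prop:quantitative equidistribution for abelian extensions}, which builds on Wright, Wood, Frei--Loughran--Newton and Alberts. By class field theory and character orthogonality, the generating series $\sum_{\varphi\text{ ram. above }\eu{P}} H(\varphi)^{-s}$ is a finite sum of Euler products $\sum_{\lambda} D_\lambda(s)\prod_{\mfp\in\eu{P}} f^\lambda_\mfp(s)$. Here $\lambda$ runs over characters of a finite group (essentially $\Hom$ of the relevant ray class/unit data, plus inclusion--exclusion from surjectivity over subgroups), and $S$ is taken to contain ${\rm Exc}(H)$ together with the primes above $|G|$. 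The local factor is
\[
f^\lambda_\mfp(s)=\frac{\sum_{\varphi_\mfp\text{ ram.}}\chi_\lambda(\varphi_\mfp)\NN\mfp^{-w s}}{\sum_{\varphi_\mfp}\chi_\lambda(\varphi_\mfp)\NN\mfp^{-w s}},
\]
with $s_0=1/a(G,w)$ and pole order $b(k,G,w)$. This gives a Tauberian mixture, and the no-cancellation condition is Wright's/Wood's theorem that the count is $\asymp X^{1/a}(\log X)^{b-1}$. The proposition supplies exactly the error term required in \cref{def:reasonable sequence for ideals}(ii), namely polynomial dependence on $\prod\NN\mfp$ together with the $\exp(-\log X/(\log\log X)^{1/2-\delta})$ saving.

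We then check that the process is well-mixed. Mixture uniformity holds because $f^\lambda_\mfp$ is a ratio of finite sums of terms $\NN\mfp^{-ws}$ with $w\ge a$, so each derivative at $s_0$ gains a factor of at most $\log\NN\mfp$ while the size stays $\ll 1/\NN\mfp$. For the similarity condition, take $\lambda\in\Lambda^+$; these are exactly the Euler products attaining the maximal pole order. For such $\lambda$, the minimal-weight terms $\NN\mfp^{-1}$ in $f^\lambda_\mfp(s_0)$ coincide with the untwisted count $\#\{\text{minimal weight tame inertia images compatible with Frobenius}\}/\NN\mfp$. Twisted characters that reweight minimal-weight sectors would lower the pole order. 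The remaining differences come from weights $\ge a+1$, giving $O(\NN\mfp^{-1-1/a})$, so $\alpha=1/a$. I expect this identification of $\Lambda^+$ and the $O(\NN\mfp^{-1-\alpha})$ agreement to be the main obstacle, requiring a careful reading of Wood's description of the leading pole.

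\textbf{Mean, variance and conclusion.} For $\lambda\in\Lambda^+$ and $\mfp\notin S$, we have $f^\lambda_\mfp(s_0)=\NN\mfp^{-1}\cdot\#\{\mf{S}\in\mc{MS}(k,G,w): \mf{S}\text{ realizable at }\mfp\}+O(\NN\mfp^{-1-\alpha})$. This count depends on the splitting of $\mfp$ in $k(\zeta_{\exp G})$. Chebotarev's theorem, in the form of the prime ideal theorem for $k(\zeta_{\exp G})$ with de la Vallée Poussin error, then gives
\[
\sum_{\NN\mfp\le X}f^\lambda_\mfp(s_0)=\#\mc{MS}(k,G,w)\log\log X+O(1)=b(k,G,w)\log\log X+O(1)
\]
by \cref{rem:number of sectors is Malle b}. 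Since $\omega(\mfp)^2=\omega(\mfp)=1$ and $\sum_\lambda w_\lambda(\infty)=1$, \cref{thm:EK criterion for ideals}(ii) yields $\mu=V=b(k,G,w)>0$. The $O(1)$ error is certainly $o(\sqrt{\log\log X})$, so the hypotheses of \cref{thm:EK criterion for ideals}(iii) hold and the theorem follows.
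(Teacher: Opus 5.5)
Your overall route is the paper's: discriminants (or their radicals) ordered by $H$, a Tauberian mixture built from the finite sum of twisted Wright series, reasonability via the tail bound and \cref{prop:quantitative equidistribution for abelian extensions}, well-mixedness via the characterization of maximal-order twists in \cref{lem:meromorphic continuation Wright and counting series abelian}(iii), and then \cref{thm:EK criterion for ideals}(ii),(iii) after a Chebotarev computation. The gap is in that final computation, which is the step that produces the constant $b(k,G,w)$.

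The leading coefficient of $f^\lambda_\mfp(s_0)$ is not the number of minimal weight sectors realizable at $\mfp$. It is the number of minimal weight \emph{elements} $g$ with $g^{\NN\mfp}=g$, which equals $\sum|\mf{S}|$ over the realizable $\mf{S}\in\mc{MS}(k,G,w)$. This is what you correctly said in your well-mixedness paragraph, so the mean paragraph contradicts it.

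For example, take $G=\Z/3\Z$, $k=\Q$ and the discriminant. The unique minimal weight sector has two elements, so $b=1$. For $p\equiv 1\bmod 3$ one has $f_p(1/2)=2/p+O(p^{-2})$, not $1/p$.

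With the sector count as you wrote it, Chebotarev does not give $\#\mc{MS}(k,G,w)\log\log X$. The group $\Gal(k(\zeta_{\exp(G)})/k)$ is abelian and acts transitively on each $\mf{S}$, so $\mf{S}$ is realizable at $\mfp$ exactly when $\Frob_\mfp$ lies in the common stabilizer of its points, which has density $1/|\mf{S}|$. You would therefore get $\sum_{\mf{S}\in\mc{MS}(k,G,w)}|\mf{S}|^{-1}\log\log X$, which is $\tfrac12\log\log X$ in the example, and the theorem would come out off by a constant factor.

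The missing ingredient is the one the paper uses, Burnside's lemma. With the correct coefficient $\nu_{\Frob_\mfp}$, the Chebotarev average of $\nu_\sigma$ over $\sigma\in\Gal(k(\zeta_{\exp(G)})/k)$ is the average number of fixed points of $\sigma$ on the minimal weight elements. By Burnside this equals the number of orbits, which is $b(k,G,w)$ by \cref{rem:number of sectors is Malle b}; equivalently, $\sum_{\mf{S}}|\mf{S}|\cdot|\mf{S}|^{-1}=\#\mc{MS}(k,G,w)$. Together with Mertens' theorem over $k$, this gives $\sum_{\NN\mfp\le X}f^\lambda_\mfp(s_0)=b(k,G,w)\log\log X+O(1)$ for every $\lambda\in\Lambda^+$. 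After that correction, your conclusion $\mu=V=b(k,G,w)$ and the application of \cref{thm:EK criterion for ideals}(iii) go through as in the paper.
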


\subsection{Wright's adelic theory}

\begin{notation}
	\label{notation:Erdos Kac abelian extensions}
	In the remainder of this section, we fix the following notation.
	We fix a number field $k$, for every place $v$ of $k$ a fixed completion $k_v$ with ring of integers $\O_v$ and decomposition group $D_v \subset \Gamma_k$, a nontrivial finite abelian group $G$ (equipped with the discrete topology), and a height function $H:\Hom(\Gamma_k,G) \to \R_{>0}$ with weight $w$.
	For every finite set of places $T$ of $k$ containing the archimedean places, we let $\O_T := \{x \in k : \forall v \notin T, \ v(x) \ge 0\}$.
	We fix a finite set of places $S$ such that $S$ contains all archimedean places, the set of primes dividing $|G|$, the set of primes of absolute norm at most $|G|^2$, the set ${\rm Exc}(H)$, and a set of primes generating the class group of $k$.
\end{notation}

We define the \textit{$S$-ramified idele group of $k$} to be the set
\[
J_S := \prod_{v \in S} k_v^\times \times \prod_{v \notin S} \O_v^\times
\]
equipped with the product topology.
It follows from the global reciprocity law in class field theory that there exists a canonical surjection of topological groups $\theta_k : J_S/\O_S^\times \twoheadrightarrow \Gamma_k^{\rm ab}$ whose kernel is the identity component of $J_S/\O_S^\times$.
In particular, $\theta_k$ gives rise to a canonical bijection between continuous homomorphisms $\varphi:\Gamma_k \to G$ and continuous characters $\chi : J_S \to G$ satisfying $\chi|_{\O_S^\times} = 0$.
The reciprocity map $\theta_k$ also satisfies a compatibility with local class field theory, which for our purposes upgrades the above bijection to include local conditions.
We state this as the following lemma.

\begin{lemma}
	\label{lem:class field theory bijection with local conditions}
	Let $k,G,H,S$ be as in \cref{notation:Erdos Kac abelian extensions}.
	For every finite set $\eu{P}$ of primes of $k$ avoiding $S$, there is a canonical bijection
	\[
	\left\{
	\begin{matrix}
		\textnormal{$\varphi \in \Hom(\Gamma_k,G)$ such that} \\
		\textnormal{$\varphi$ is ramified above $\eu{P}$}
	\end{matrix}
	\right\}
	\longleftrightarrow
	\left\{
	\begin{matrix}
		\textnormal{$\chi \in \Hom(J_S,G)$ such that} \\
		\textnormal{$\chi|_{\O_S^\times} = 0$ and $\forall \mfp \in \eu{P}, \ \chi(\zeta_{\mfp}') \neq 1$}
	\end{matrix}
	\right\}
	\]
	where $\zeta_{\mfp}' \in \O_\mfp^\times \subset J_S$ is any generator of the prime-to-the-residue-characteristic roots of unity in $k_\mfp$.
\end{lemma}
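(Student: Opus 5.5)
The plan is to start from the canonical bijection already described before the statement, between $\Hom(\Gamma_k,G)$ and continuous characters $\chi:J_S\to G$ trivial on $\O_S^\times$. This bijection is obtained by composing with the reciprocity map $\theta_k$; since $G$ is abelian and discrete, every $\varphi$ factors through $\Gamma_k^{\rm ab}$. We also use that a continuous homomorphism into a finite discrete group kills the identity component, so it factors through $\theta_k$. Two facts about $S$ make this work, and I would check them first. Because $S$ contains a set of primes generating the class group of $k$, we have $\mathbb{A}_k^\times = k^\times J_S$, so $J_S/\O_S^\times \cong \mathbb{A}_k^\times/k^\times$. Hence idele-class characters of finite order correspond exactly to characters of $J_S$ trivial on $\O_S^\times$. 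All that remains is to match the local ramification condition at each $\mfp\in\eu{P}$ with the condition $\chi(\zeta_\mfp')\ne 1$.

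For the local matching I would use local-global compatibility. The restriction of $\varphi$ to $D_\mfp$ corresponds, via local reciprocity $k_\mfp^\times\to D_\mfp^{\rm ab}$, to $\chi|_{k_\mfp^\times}$. Under this map, $\O_\mfp^\times$ maps onto the image of inertia $I_\mfp$ in $D_\mfp^{\rm ab}$. Hence $\varphi|_{I_\mfp}$ is trivial if and only if $\chi|_{\O_\mfp^\times}$ is trivial.

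Since $\mfp\notin S$, its residue characteristic does not divide $|G|$. The group $\O_\mfp^\times$ is the product of the Teichmüller group $\mu_{q-1}$ (where $q=\NN\mfp$) and the pro-$p$ group $1+\mfp\O_\mfp$. Any homomorphism $\O_\mfp^\times\to G$ kills the pro-$p$ part, since $p\nmid |G|$. It therefore factors through the cyclic group $\mu_{q-1}$, which is generated by $\zeta_\mfp'$, the generator of the prime-to-$p$ roots of unity. So $\chi|_{\O_\mfp^\times}$ is nontrivial exactly when $\chi(\zeta_\mfp')\ne 1$. The condition is independent of the choice of generator, because $\chi(\zeta_\mfp'^e)=\chi(\zeta_\mfp')^e$ with $e$ coprime to $q-1$, hence coprime to the order of $\chi(\zeta_\mfp')$, which divides $\gcd(q-1,|G|)$.

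Intersecting these conditions over $\mfp\in\eu{P}$ restricts the global bijection to the two displayed subsets. The only delicate point, which I expect to be the main obstacle, is the bookkeeping of the first step. One must confirm that $\theta_k$ restricted to $k_\mfp^\times\subset J_S$ agrees with local reciprocity; for $\mfp\notin S$ only the units $\O_\mfp^\times$ lie in $J_S$, so this is the standard compatibility. One must also confirm that the kernel, being the identity component, imposes no extra constraint on finite-order characters. Both are standard facts from class field theory, which I would cite rather than reprove.
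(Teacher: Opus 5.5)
Your proposal is correct and follows essentially the same route as the paper, whose proof simply cites Wood's Lemma 2.7 together with the standard statements of global and local class field theory. You unpack what that citation packages. First, the class-group condition on $S$ identifies $J_S/\O_S^\times$ with $\mathbb{A}_k^\times/k^\times$. Second, local--global compatibility sends $\O_\mfp^\times$ onto the image of inertia. Third, since the residue characteristic of $\mfp \notin S$ does not divide $|G|$, characters of $\O_\mfp^\times$ factor through the cyclic group generated by $\zeta_\mfp'$.
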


\begin{proof}
	This follows from \cite[Lem.\ 2.7]{WoodLocalAbelianProbabilities} and the statements of class field theory.
\end{proof}

\cref{lem:class field theory bijection with local conditions} motivates the following definition of ramification for characters.

\begin{definition}
	Let $k,G,S$ be as in \cref{notation:Erdos Kac abelian extensions}, and $\eu{P}$ a finite set of primes of $k$ avoiding $S$.
	We say that $\chi \in \Hom(J_S,G)$ is \textit{ramified above $\eu{P}$} if for all $\mfp \in \eu{P}$ one has that $\chi(\zeta_{\mfp}') \ne 1$ where $\zeta_{\mfp}' \in \O_{\mfp}^\times$ is an arbitrary generator of the prime-to-the-residue-characteristic roots of unity in $k_{\mfp}$.
\end{definition}

\cref{lem:class field theory bijection with local conditions} motivates the following construction of a height function for characters.

\begin{construction}
	Let $k,G,H,S$ be as in \cref{notation:Erdos Kac abelian extensions}.
	Using local class field theory, we may view $H$ as a function $H':\Hom(J_S,G) \to \R_{>0}$.
	Indeed, for every place $v$ of $k$ we have by local class field theory a function $H_v':\Hom(k_v^\times,G) \xrightarrow{\sim} \Hom(D_v,G) \xrightarrow{H_v} \R_{>0}$, and we define $H' := \prod_v H_v' \circ \res_v$ where $\res_v:\Hom(J_S,G) \to \Hom(k_v^{\times},G)$ is the local restriction map at $v$.
\end{construction}

The condition that $\chi|_{\O_S^\times}$ vanishes can be computed in terms of the perfect pairing of Pontryagin dual groups
\begin{equation}
\label{eq:perfect pairing S integral}
\Hom(\O_S^\times,G) \times (\O_S^\times \otimes G^\vee) \to \Q/\Z, \qquad 
(\psi,\varepsilon \otimes f) \mapsto f(\psi(\varepsilon)).
\end{equation}
By Dirichlet's unit theorem, the groups $\Hom(\O_S^\times,G)$ and $\O_S^\times \otimes G^\vee$ are finite.
In fact, because $\O_S^\times \to J_S$ is an injection we can recover the above pairing as a sum of local perfect pairings of Pontryagin dual groups
\[
\langle-,-\rangle_v:
\Hom(k_v^\times,G) \times (k_v^\times \otimes G^\vee) \to \Q/\Z, \qquad \langle\chi_v,\varepsilon_v\otimes f\rangle_v := f(\chi_v(\varepsilon_v))
\]
over all places $v$ of $k$.
We formalize this as follows.

\begin{definition}
	Let $k,G,S$ be as in \cref{notation:Erdos Kac abelian extensions}.
	We define the \textit{obstruction pairing}
	\[
	\langle-,-\rangle : \Hom(J_S,G) \times (\O_S^\times\otimes G^\vee) \to \Q/\Z
	\]
	via
	\[
	\langle (\chi_v)_v ,\varepsilon\rangle :=
	\sum_v \, \langle \chi_v, \varepsilon\rangle_v
	\]
	where $v$ varies over all places of $k$.
	Indeed, this is well-defined because continuity enforces $\chi_v$ to be trivial for all but finitely many $v$.
\end{definition}

\begin{lemma}
	Let $k,G,S$ be as in \cref{notation:Erdos Kac abelian extensions}.
	Then the obstruction pairing $\langle-,-\rangle$ has left kernel equal to $\Hom(J_S/\O_S^\times,G)$.
\end{lemma}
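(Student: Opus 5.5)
We need to show: the set of $\chi \in \Hom(J_S,G)$ with $\langle \chi,\varepsilon\rangle = 0$ for all $\varepsilon \in \O_S^\times\otimes G^\vee$ equals the set of $\chi$ with $\chi|_{\O_S^\times}$ trivial. The plan is to first unwind the obstruction pairing as the pullback of the global pairing \cref{eq:perfect pairing S integral} along restriction to $\O_S^\times$, and then invoke perfectness (nondegeneracy in the first variable) of that global pairing.

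\textbf{Step 1: compatibility with restriction.} Fix $\chi = (\chi_v)_v \in \Hom(J_S,G)$. By continuity there is a finite set of places $T$ such that $\chi_v$ is trivial for $v \notin T$, and it suffices to check the claim on pure tensors $\varepsilon\otimes f$ with $\varepsilon \in \O_S^\times$ and $f \in G^\vee$. We view $\varepsilon$ diagonally in $J_S$; this is legitimate because $\varepsilon \in \O_v^\times$ for $v \notin S$. Then
\[
\langle \chi, \varepsilon\otimes f\rangle = \sum_v f(\chi_v(\varepsilon)) = f\Big(\prod_v \chi_v(\varepsilon)\Big) = f\big(\chi(\varepsilon)\big),
\]
where we use that $G$ is abelian, that $\chi$ is the product of its local components on $J_S$, and that only finitely many factors are nontrivial. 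Hence $\langle \chi,\varepsilon\otimes f\rangle = \langle \chi|_{\O_S^\times}, \varepsilon\otimes f\rangle_{\rm glob}$, where $\langle -,-\rangle_{\rm glob}$ denotes the pairing \cref{eq:perfect pairing S integral}. This identity extends bilinearly to all of $\O_S^\times\otimes G^\vee$.

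\textbf{Step 2: nondegeneracy.} By Step 1, $\chi$ lies in the left kernel if and only if $\psi := \chi|_{\O_S^\times}$ pairs to zero with all of $\O_S^\times\otimes G^\vee$. This forces $\psi = 0$, which we can see directly without citing perfectness: if $\psi(\varepsilon) \neq 1$ for some $\varepsilon$, choose a character $f \in G^\vee$ with $f(\psi(\varepsilon)) \ne 0$, which exists since characters separate points of a finite abelian group. Conversely, $\psi = 0$ clearly pairs trivially. Finally, $\chi|_{\O_S^\times}=0$ is the same as $\chi$ factoring through $J_S/\O_S^\times$, so the left kernel is exactly $\Hom(J_S/\O_S^\times,G)$.

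\textbf{Main subtlety.} The only point requiring care is making the sum over places well-defined and equal to $f(\chi(\varepsilon))$, i.e.\ confirming that a continuous $\chi$ on the restricted product is determined by its local components with $\chi_v$ trivial on $\O_v^\times$ for almost all $v$. This follows because $G$ is discrete, so $\ker\chi$ is an open subgroup, which therefore contains $\prod_{v\notin T}\O_v^\times$ for some finite $T$. I do not expect any other obstacle.
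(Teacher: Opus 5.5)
Your proof is correct and follows the same route as the paper: the obstruction pairing factors through the restriction map $\Hom(J_S,G)\to\Hom(\O_S^\times,G)$ and the perfect pairing \cref{eq:perfect pairing S integral}, so the left kernel is the kernel of restriction, which is $\Hom(J_S/\O_S^\times,G)$. You spell out two steps the paper leaves implicit: the identity $\sum_v f(\chi_v(\varepsilon)) = f(\chi(\varepsilon))$, justified by continuity of $\chi$, and nondegeneracy in the first variable, justified because characters separate points of $G$.
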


\begin{proof}
	The pairing $\langle-,-\rangle$ naturally factors through the perfect pairing \cref{eq:perfect pairing S integral}, so the left kernel of $\langle-,-\rangle$ is equal to the kernel of the restriction map $\Hom(J_S,G)\to\Hom(\O_S^\times,G)$.
\end{proof}

\begin{definition}
	Let $k,G,S$ be as in \cref{notation:Erdos Kac abelian extensions}.
	We define the \textit{obstruction group} ${\rm Obs}(k,G,S) := \O_S^\times \otimes G^\vee$.
\end{definition}

\begin{remark}
	\label{rem:abelian obstruction group is a Brauer group}
	We have chosen $S$ sufficiently large so that ${\rm Obs}(k,G,S) = H^1(\O_S,\Hom(G,\O_S^\times))$ is the algebraic Brauer group of $BG$ over $\Spec(\O_S)$.
	Comparing with the work of Loughran and Santens, the space $\Hom(J_S,G)$ resembles an $S$-integral version of a partial adelic space of $BG$ \cite[Def.\ 7.2]{LoughranSantensMalleBrauer}, the pairing $\langle-,-\rangle$ resembles an $S$-integral version of the Brauer--Manin pairing \cite[Lem.\ 7.5]{LoughranSantensMalleBrauer}, and the left kernel $\Hom(J_S/\O_S^{\times},G)$ resembles an $S$-integral version of the Brauer--Manin set \cite[\S8.5]{LoughranSantensMalleBrauer}.
\end{remark}

\begin{definition}
	\label{def:abelian ramification factors f mfp}
	Let $k,G,w,S$ be as in \cref{notation:Erdos Kac abelian extensions}.
	For every prime $\mfp \notin S$ of $k$ and $\beta \in {\rm Obs}(k,G,S)$, we define the \textit{$\beta$-twisted ramification factor at $\mfp$} to be the formal meromorphic function
	\[
	f_{\mfp}^{\beta}(s) = 
	f_{\mfp}^{G,\beta}(s) := \frac{\sum_{\substack{\chi_{\mfp} \in \Hom(\O_{\mfp}^\times,G) \\ \chi_{\mfp}(\zeta_{\mfp}') \ne 1}} 
		e^{2\pi i \langle \chi_\mfp,\beta\rangle_\mfp}
		(\NN \mfp)^{-w(\chi_{\mfp}(\zeta_{\mfp}')) s}}
	{
		\sum_{\chi_{\mfp} \in \Hom(\O_{\mfp}^\times,G)} 
		e^{2\pi i \langle \chi_\mfp,\beta\rangle_\mfp}
		(\NN \mfp)^{-w(\chi_{\mfp}(\zeta_{\mfp}')) s}
	},
	\]
	where $\zeta_{\mfp}'$ is an arbitrary generator of the prime-to-the-residue-characteristic roots of unity in $k_{\mfp}$.
	We also let $f_{\mfp}(s) := f_{\mfp}^{0}(s)$ where $0$ is the identity element of ${\rm Obs}(k,G,S)$.
\end{definition}

\begin{lemma}
	\label{lem: f mfp (s) are holomorphic}
	Let $k,G,S$ be as in \cref{notation:Erdos Kac abelian extensions} and $\beta \in {\rm Obs}(k,G,S)$.
	For every prime $\mfp \notin S$ of $k$, $f_\mfp^{\beta}(s)$ is holomorphic in the region $\Re(s) > 1/(a(G,w)+1)$.
\end{lemma}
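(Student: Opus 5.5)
Numerator and denominator of $f_\mfp^\beta(s)$ are finite sums of exponentials, hence entire, so $f_\mfp^\beta(s)$ is meromorphic on $\C$. The plan is to show the denominator has no zeros in $\Re(s) > 1/(a(G,w)+1)$. I will write $a := a(G,w)$ and $q := \NN\mfp$, and split off the trivial character.

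\textbf{Step 1: isolate the main term.} For $\chi_\mfp = 1$ the exponent $w(1)=0$ and the twist $e^{2\pi i\langle 1,\beta\rangle_\mfp} = 1$, so this term contributes exactly $1$. Every nontrivial $\chi_\mfp \in \Hom(\O_\mfp^\times,G)$ is tame, since $\mfp \notin S$ and hence $\mfp \nmid |G|$. So $\chi_\mfp$ factors through $\O_\mfp^\times \to (\O_\mfp/\mfp)^\times$, a cyclic group of order $q-1$ generated by the image of $\zeta_\mfp'$. Therefore $\chi_\mfp(\zeta_\mfp') \neq 1$, which gives $w(\chi_\mfp(\zeta_\mfp')) \ge a$. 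The denominator thus takes the form $1 + E(s)$, where $E(s)$ is a sum of at most $\#\Hom(\Z/(q-1)\Z, G) \le |G|$ terms. Each term has modulus $q^{-w\,\Re(s)} \le q^{-a\Re(s)}$ for $\Re(s)>0$, so $|E(s)| \le |G|\, q^{-a\Re(s)}$.

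\textbf{Step 2: use the choice of $S$.} By \cref{notation:Erdos Kac abelian extensions}, every $\mfp \notin S$ has $q > |G|^2$. Suppose $\Re(s) > 1/(a+1)$. Then $a\Re(s) > a/(a+1) \ge 1/2$ because $a \ge 1$. Hence
\[
|E(s)| \le |G|\, q^{-a\Re(s)} < |G|\, q^{-1/2} < |G|\cdot |G|^{-1} = 1.
\]
So $1+E(s)$ does not vanish there. The numerator is entire, so $f_\mfp^\beta(s)$ is holomorphic on $\Re(s) > 1/(a+1)$.

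\textbf{Main obstacle.} The only delicate point is getting a strict inequality $|E(s)|<1$. This is exactly why $S$ was required to contain all primes of norm at most $|G|^2$, and why the threshold $1/(a+1)$ gives the exponent $a\Re(s) > 1/2$. I would also note that the argument in fact shows holomorphy for $\Re(s) > 1/(2a)$, which is a wider region since $1/(2a) \le 1/(a+1)$.
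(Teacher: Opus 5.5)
Your proposal is correct and follows the paper's own argument: split off the trivial character, bound the remaining at most $|G|$ terms by $|G|(\NN\mfp)^{-a\Re(s)}$ using $a\Re(s) > a/(a+1) \ge 1/2$, and use $\NN\mfp > |G|^2$ for $\mfp \notin S$ to show the denominator does not vanish. Your tameness justification is a step the paper leaves implicit: every character factors through $(\O_\mfp/\mfp)^\times$ because $\mfp \nmid |G|$, so any nontrivial $\chi_\mfp$ satisfies $\chi_\mfp(\zeta_\mfp')\neq 1$ and therefore has weight at least $a$.
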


\begin{proof}
	Let $\mfp$ be as in the lemma statement.
	It suffices to show that the denominator of $f_\mfp^{\beta}(s)$ is nonzero.
	Let $s\in \C$ with $\Re(s) > 1/(a(G,w)+1)$.
	Then pulling out the trivial character from the denominator and applying the reverse triangle inequality yields
	\[
	\left|
		\sum_{\chi_{\mfp} \in \Hom(\O_{\mfp}^\times,G)} 
		\frac{e^{2\pi i \langle \chi_\mfp,\beta\rangle_\mfp}}
		{(\NN \mfp)^{w(\chi_{\mfp}(\zeta_{\mfp}')) s}}
	\right|
	\ge 
	1 -  \frac{\#\Hom(\O_\mfp^\times,G)}{(\NN\mfp)^{a/(a+1)}}.
	\]
	Since $\#\Hom(\O_{\mfp}^\times,G) \le |G|$ and $a/(a+1) \ge 1/2$, the lower bound is at least $1-|G|/(\NN\mfp)^{1/2} > 0$.
\end{proof}

\begin{definition}
	Let $k,G,w,H,S$ be as in \cref{notation:Erdos Kac abelian extensions} and $\eu{P}$ a finite set of primes of $k$ avoiding $S$.
	For every $\beta \in {\rm Obs}(k,G,S)$, we define the \textit{$\beta$-twisted Wright series with ramification above $\eu{P}$} to be the formal Euler product
	\[
	{\rm W}_{k,G,H}^{\beta,\eu{P}}(s) :=
	\prod_{v \in S}
	\left(
	\sum_{\chi_v \in \Hom(k_v^\times,G)}
	\frac{e^{2\pi i \langle \chi_v,\beta_v\rangle_v}}{H_v'(\chi_v)^s}
	\right)
	\prod_{v \notin S} \left(
	\sum_{\chi_v \in \Hom(\O_v^\times,G)}
	\frac{e^{2\pi i \langle \chi_v,\beta_v\rangle_v}}{H_v'(\chi_v)^s}
	\right)
	\prodflat_{\mfp \in \eu{P}} f_{\mfp}^{\beta}(s).
	\]
	where $v$ varies over all places of $k$.
\end{definition}

\begin{definition}
	Let $k,G,H$ be as in \cref{notation:Erdos Kac abelian extensions}.
	For every subgroup $N \le G$, we define the \textit{restricted height} $H|_N : \Hom(\Gamma_k,N) \to \R_{>0}$ to be $\Hom(\Gamma_k,N) \to \Hom(\Gamma_k,G) \xrightarrow{H} \R_{>0}$.
\end{definition}

\begin{lemma}
	\label{lem:counting series as a finite sum of euler products abelian case}
	Let $k,G,H,S$ be as in \cref{notation:Erdos Kac abelian extensions} and $\eu{P}$ a finite set of primes of $k$ avoiding $S$.
	The following statements hold.
	\begin{enumerate}
		\item One has an equality of formal Dirichlet series
		\[
		\sum_{\substack{\varphi \in \Hom(\Gamma_k,G) \\ \varphi \textnormal{ is ramified above $\eu{P}$}}} \frac{1}{H(\varphi)^s} = 
		\frac{1}{\#{\rm Obs}(k,G,S)}
		\sum_{\beta \in {\rm Obs}(k,G,S)} {\rm W}_{k,G,H}^{\beta,\eu{P}}(s).
		\]
		\item One has an equality of formal Dirichlet series
		\[
		\sum_{\substack{\varphi \in \Sur(\Gamma_k,G) \\ \varphi \textnormal{ is ramified above $\eu{P}$}}} \frac{1}{H(\varphi)^s} = 
		\sum_{N \le G}
		\frac{\mu(N,G)}{\#{\rm Obs}(k,N,S)}
		\sum_{\beta \in {\rm Obs}(k,N,S)} {\rm W}_{k,N,H|_N}^{\beta,\eu{P}}(s)
		\]
		where $\mu(N,G)$ is the upper interval Möbius function on the lattice of subgroups $N \le G$.
	\end{enumerate}
\end{lemma}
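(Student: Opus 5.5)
We prove (1) by Poisson summation / character orthogonality on the finite group ${\rm Obs}(k,G,S)$, and then deduce (2) from (1) by Möbius inversion on the subgroup lattice.

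For (1), \cref{lem:class field theory bijection with local conditions} identifies the left-hand side with
\[
\sum_{\substack{\chi \in \Hom(J_S,G),\ \chi|_{\O_S^\times}=0 \\ \chi \text{ ramified above } \eu{P}}} H'(\chi)^{-s},
\]
because $H'$ corresponds to $H$ under local class field theory at each place. By the lemma on the obstruction pairing, the condition $\chi|_{\O_S^\times}=0$ says exactly that $\chi$ lies in the left kernel of $\langle-,-\rangle$. Orthogonality of characters of the finite abelian group ${\rm Obs}(k,G,S)$ then gives
\[
\mathbbm{1}[\chi|_{\O_S^\times}=0] = \frac{1}{\#{\rm Obs}(k,G,S)}\sum_{\beta\in{\rm Obs}(k,G,S)} e^{2\pi i\langle \chi,\beta\rangle}.
\]
We insert this identity and interchange the sums; this is legitimate at the level of formal Dirichlet series, since only finitely many $\chi$ have bounded height. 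Next we factor the sum over $\chi=(\chi_v)_v\in\Hom(J_S,G)=\prod_{v\in S}\Hom(k_v^\times,G)\times\prod_{v\notin S}\Hom(\O_v^\times,G)$ as an Euler product. Both $e^{2\pi i\langle\chi,\beta\rangle}=\prod_v e^{2\pi i\langle\chi_v,\beta\rangle_v}$ and $H'(\chi)^{-s}=\prod_v H'_v(\chi_v)^{-s}$ factor over places, and so does the ramification condition, which only constrains the local components at the $\mfp\in\eu{P}$. At each such $\mfp$, the local sum restricted to $\chi_\mfp(\zeta'_\mfp)\neq1$ equals the full local factor times $f_\mfp^\beta(s)$, directly from \cref{def:abelian ramification factors f mfp}. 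For $\mfp\notin S$ we have $H'_\mfp(\chi_\mfp)=(\NN\mfp)^{w(\chi_\mfp(\zeta'_\mfp))}$, because $\zeta'_\mfp$ maps to a generator of tame inertia; this is well defined since $S$ contains all primes dividing $|G|$ and ${\rm Exc}(H)$. The resulting product is exactly ${\rm W}^{\beta,\eu{P}}_{k,G,H}(s)$.

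The step requiring the most care is this factorization. One must check that each formal product over infinitely many places makes sense as a Dirichlet series: the local factor at $v\notin S$ is $1+O((\NN v)^{-as})$ with finitely many terms, and the global sum is a restricted product because continuity forces $\chi_v$ to be unramified, hence trivial on $\O_v^\times$, for almost all $v$. One must also check that the local pairings indeed reassemble into the global pairing.

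For (2), note that $\Hom(\Gamma_k,G)=\coprod_{N\le G}\Sur(\Gamma_k,N)$, sorted by the image of $\varphi$. Ramification above $\eu{P}$ and the height are both compatible with the inclusions $N\le G$, using the restricted heights $H|_N$. Hence, for every $M\le G$,
\[
F_M(s):=\sum_{\substack{\varphi\in\Hom(\Gamma_k,M)\\ \text{ram. above }\eu{P}}} H(\varphi)^{-s}=\sum_{N\le M} \sum_{\substack{\varphi\in\Sur(\Gamma_k,N)\\ \text{ram. above }\eu{P}}} H(\varphi)^{-s}.
\]
Möbius inversion on the subgroup lattice expresses the surjective sum for $G$ as $\sum_{N\le G}\mu(N,G)F_N(s)$. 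Applying (1) to each abelian group $N$ with height $H|_N$ finishes the proof. For this we verify that $S$ remains admissible for $N$: it contains the primes dividing $|N|$, the primes of norm at most $|N|^2$, and ${\rm Exc}(H|_N)\subset{\rm Exc}(H)$. We also note that $H|_N$ is a height function with weight $w|_N$.
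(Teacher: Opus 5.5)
Your proposal is correct and follows essentially the same route as the paper. Part (1) uses the class field theory bijection with local conditions, detects $\chi|_{\O_S^\times}=0$ by character orthogonality on ${\rm Obs}(k,G,S)$, and factors over places so that the restricted local sums at $\mfp\in\eu{P}$ produce $f_\mfp^\beta(s)$; part (2) is M\"obius inversion over subgroups $N\le G$ applied to (1) with data $(k,N,H|_N,S,\eu{P})$. Your extra checks (the restricted-product structure of $\Hom(J_S,G)$, and that $S$ and $H|_N$ stay admissible for every subgroup $N$) fill in details the paper leaves implicit.
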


\begin{proof}[Proof of \cref{lem:counting series as a finite sum of euler products abelian case}]
	The second statement for $G$ follows from applying Möbius inversion to the first statement with data $(k,N,H|_N,S,\eu{P})$ as $N\le G$ varies over all subgroups.
	Therefore, it suffices to prove the first statement.
	As $v$ varies over all places of $k$, and for all $\mfp\in\eu{P}$ we let $\zeta_{\mfp}'$ be an arbitrary generator of the prime-to-the-residue-characteristic roots of unity in $k_{\mfp}$, we have that
	\begin{align*}
		\sum_{\substack{\varphi \in \Hom(\Gamma_k,G) \\ \varphi \textnormal{ is ramified above $\eu{P}$}}} \frac{1}{H(\varphi)^s}
		&= 
		\sum_{\substack{\chi \in \Hom(J_S,G) \\ \chi \text{ is ramified above $\eu{P}$,} \\ \chi|_{\O_S^\times} = 0}} \frac{1}{H'(\chi)^s}
		\qquad \text{ by \cref{lem:class field theory bijection with local conditions}}
		\\ &= 
		\sum_{\substack{\chi \in \Hom(J_S,G)\\ \chi \text{ is ramified above $\eu{P}$}}} 
		\frac{1}{H'(\chi)^s}
		\frac{1}{\#{\rm Obs}(k,G,S)} \sum_{\beta \in {\rm Obs}(k,G,S)} e^{2\pi i \langle \chi,\beta\rangle}
		\\ &=
		\sum_{\substack{(\chi_v)_v \in \Hom(J_S,G)\\ \forall \mfp \in \eu{P}, \  \chi_{\mfp}(\zeta_{\mfp}') \ne 1}} 
		\frac{1}{\#{\rm Obs}(k,G,S)} \sum_{\beta \in {\rm Obs}(k,G,S)} 
		\prod_v \frac{e^{2\pi i \langle \chi_v,\beta\rangle_v}}{H_v'(\chi_v)^s}
		\\
		= 
		\frac{1}{\#{\rm Obs}(k,G,S)} \sum_{\beta \in {\rm Obs}(k,G,S)} 
		\prod_{v \in S} &\left(\sum_{\chi_v \in \Hom(k_v^\times,G)} \frac{e^{2\pi i \langle\chi_v,\beta\rangle_v}}{H_v'(\chi_v)^s}\right)
		\prod_{v \notin S} \left( \sum_{\chi_v \in \Hom(\O_v^\times,G)} \frac{e^{2\pi i \langle\chi_v,\beta\rangle_v}}{H_v'(\chi_v)^s} \right)
		\prodflat_{\mfp \in \eu{P}} f_{\mfp}^{\beta}(s)
		\\ &=
		\frac{1}{\#{\rm Obs}(k,G,S)} \sum_{\beta \in {\rm Obs}(k,G,S)} 
		{\rm W}_{k,G,H}^{\beta,\eu{P}}(s).
		\end{align*}
		This finishes the proof.
\end{proof}

\subsection{Basic tail estimate}

The following lemma is exactly what is needed to verify the condition appearing in \cref{def:reasonable sequence for ideals}(i) in our applications.

\begin{lemma}[Tail estimate for ramified primes]
	\label{lem:restricted discriminants have sparse tails}
	Let $k,G,H,w$ be as in \cref{notation:Erdos Kac abelian extensions}.
	Then for all $X \ge 10$ sufficiently large, $\varphi \in \Sur(\Gamma_k,G)$ with $H(\varphi) \le X$, and $0 < \psi \le 1$ one has that
	\[
	\#\{\mfp \subset \O_k \textnormal{ prime} : \mfp \mid \disc(\varphi),\ \NN\mfp > X^\psi\}
	\ll_{k,G,H} \frac{1}{\psi}.
	\]
\end{lemma}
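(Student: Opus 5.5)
The plan is to compare the product of the large ramified primes with the height $H(\varphi)$. Set
\[
N := \#\{\mfp : \mfp \mid \disc(\varphi),\ \NN\mfp > X^\psi\}.
\]
Discard the finitely many primes in $S$; they contribute $O_{k,G,H}(1)$, which is at most $O(1/\psi)$ because $\psi \le 1$.

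Let $\mfp \notin S$ divide $\disc(\varphi)$. Then $\mfp \nmid |G|$, so $\mfp$ is tamely ramified and $\varphi(\sigma_\mfp)$ is nontrivial, where $\sigma_\mfp$ generates tame inertia. In \cref{def:height function} the exceptional set ${\rm Exc}(H)$ is contained in $S$ by \cref{notation:Erdos Kac abelian extensions}. Hence the local factor is
\[
H_\mfp(\varphi|_{D_\mfp}) = (\NN\mfp)^{w(\varphi(\sigma_\mfp))} \ge (\NN\mfp)^{a(G,w)} \ge \NN\mfp,
\]
since $w(g) \ge a(G,w) \ge 1$ for every $g \ne 1$.

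For the remaining places, note that each $H_v$ with $v \notin S$ is at least $1$. For $v \in S$ there are only finitely many places, and each $\Hom(D_v,G)$ is finite, because $D_v$ is topologically finitely generated as a local Galois group. So $c := \prod_{v \in S} \min H_v > 0$ is a constant depending only on $k, G, H$. Therefore
\[
X \ge H(\varphi) \ge c \prod_{\substack{\mfp \notin S \\ \mfp \mid \disc(\varphi),\ \NN\mfp > X^\psi}} \NN\mfp \ge c\, X^{\psi N'},
\]
where $N'$ counts the large ramified primes outside $S$. Taking logarithms gives $\psi N' \log X \le \log X - \log c$. For $X$ sufficiently large this yields $N' \le 2/\psi$, and so $N \le 2/\psi + O(1) \ll 1/\psi$.

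The only point needing care is the uniform positive lower bound $c$ for the archimedean and exceptional local factors. It follows from the finiteness of $S$ and of each local $\Hom(D_v,G)$, together with the fact that each $H_v$ takes values in $\R_{>0}$.
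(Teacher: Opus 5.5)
Your proof is correct and follows the paper's argument: bound the local height factor of each large ramified prime outside the exceptional set below by a power of $X^\psi$, compare with $H(\varphi)\le X$, and add back the finitely many exceptional primes. Your constant $c=\prod_{v\in S}\min H_v$ makes explicit a step the paper leaves implicit: the paper writes $H(\varphi)\ge\prod_{\mfp\in U}H_\mfp(\varphi|_{D_\mfp})$, which tacitly requires the exceptional local factors to be $\ge 1$. The price is needing $X\ge 1/c$ and getting $2/\psi$ rather than the paper's $1/(a(G,w)\psi)$, which is harmless.
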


\begin{proof}
	Fix $\psi \in (0,1]$ and let
	\[
	U := \{\mfp \subset \O_k \textnormal{ prime} : \mfp \mid \disc(\varphi),\ \mfp \notin {\rm Exc}(H),\ \NN\mfp > X^\psi\}.
	\]
	For every $\mfp \in U$ we have
	\[
	H_\mfp(\varphi|_{D_\mfp}) \ge X^{a(G,w)\psi},
	\]
	so
	\[
	X \ge H(\varphi) \ge \prod_{\mfp \in U} H_\mfp(\varphi|_{D_\mfp}) \ge X^{a(G,w)\psi \cdot \#U}.
	\]
	Hence
	\[
	\#U \le \frac{1}{a(G,w)\psi}.
	\]
	Adding the finitely many primes in ${\rm Exc}(H)$ proves the claim.
\end{proof}

\subsection{Uniform equidistribution for abelian extensions}

We continue to use \cref{notation:Erdos Kac abelian extensions}.
In this subsection, we prove \cref{prop:quantitative equidistribution for abelian extensions}.

\begin{definition}
	Let $k,G,H,S$ be as in \cref{notation:Erdos Kac abelian extensions} and $\eu{P}$ a finite set of primes of $k$ avoiding $S$.
	We define the \textit{counting series with ramification above $\eu{P}$} to be the formal Dirichlet series
	\[
	\mc{N}_{k,G,H}^{\eu{P}}(s) := 
	\sum_{\substack{\varphi \in \Sur(\Gamma_k,G) \\ \varphi \textnormal{ is ramified above $\eu{P}$}}} \frac{1}{H(\varphi)^s}.
	\]
	We also define for all $X \ge 10$ the \textit{counting function with ramification above $\eu{P}$}
	\[
	{\rm N}_{k,G,H}^{\eu{P}}(X) := 
	\#\{\varphi \in \Sur(\Gamma_k,G) : H(\varphi) \le X, \text{$\varphi$ is ramified above $\eu{P}$}\}.
	\]
	We also let $\mc{N}_{k,G,H}(s) := \mc{N}_{k,G,H}^{\es}(s)$.
\end{definition}

\begin{lemma}
	\label{lem:meromorphic continuation Wright and counting series abelian}
	Let $k,G,H,S$ be as in \cref{notation:Erdos Kac abelian extensions} and $\eu{P}$ a finite set of primes of $k$ avoiding $S$.
	The following hold.
	\begin{enumerate}
		\item[(i)] For every subgroup $N \le G$ and every $\beta \in {\rm Obs}(k,N,S)$, the twisted Wright series ${\rm W}_{k,N,H|_N}^{\beta,\eu{P}}(s)$ admits a meromorphic continuation to the half-plane $\Re(s) > 1/(a(G,w)+1)$, and the only possible pole in this region is at $s = 1/a(G,w)$ of order at most $b(k,G,w)$.
		\item[(ii)] If $\sideset{}{^{\flat}_{\mfp \in \eu{P}}}{\prod} f_{\mfp}(s)$ does not vanish at $s = 1/a(G,w)$, then the counting series $\mc{N}_{k,G,H}^{\eu{P}}(s)$ is meromorphic in the region $\Re(s) > 1/(a(G,w)+1)$, and its only pole in this region is at $s = 1/a(G,w)$ of order $b(k,G,w)$.
		\item[(iii)] For every subgroup $N\le G$ and every $\beta \in {\rm Obs}(k,N,S)$, $W_{k,N,H|_N}^{\beta}(s)$ has a pole at $s=1/a(G,w)$ of order exactly $b(k,G,w)$ if and only if $N$ contains all minimal weight elements of $G$ with respect to $w$, and for all but finitely many primes $\mfp$ of $k$ avoiding $S$ one has that $\langle \beta, \chi_{\mfp} \rangle_{\mfp} = 0$ for all characters $\chi_{\mfp}: \O_{\mfp}^\times \to N$ such that $w(\chi_{\mfp}(\zeta_{\mfp}')) = a(G,w)$, where $\zeta_{\mfp}' \in \O_{\mfp}^{\times}$ is any generator of the prime-to-the-residue-characteristic roots of unity in $k_{\mfp}$.
	\end{enumerate}
\end{lemma}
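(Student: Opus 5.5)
The plan is to compare each twisted Wright series with a product of Hecke $L$-functions. For $v\notin S$ and $\beta\in{\rm Obs}(k,N,S)$, the local factor $\sum_{\chi_v\in\Hom(\O_v^\times,N)} e^{2\pi i\langle\chi_v,\beta\rangle_v}(\NN v)^{-w(\chi_v(\zeta_v'))s}$ equals $1+\sum_{\chi_v\neq 1}(\cdots)$. Since $v\nmid|G|$, characters of $\O_v^\times$ into $N$ factor through $(\O_v/v)^\times$, which is cyclic. So $\chi_v$ is determined by $\chi_v(\zeta_v')\in N[\NN v-1]$. The minimal-weight terms give $(\NN v)^{-as}\sum_{g}e^{2\pi i\langle\chi_g,\beta\rangle_v}$, where $g$ runs over the minimal weight elements of $N$ with order dividing $\NN v-1$; here $a=a(G,w)$. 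The remaining terms are $O((\NN v)^{-(a+1)\Re s})$.

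Step one: for each minimal weight conjugacy class orbit, that is each sector $\mf{S}\in\mc{MS}(k,G,w)$ lying in $N$, the condition $g\in N[\NN v-1]$ depends only on the splitting of $v$ in $k(\zeta_{\exp G})$. The twist $e^{2\pi i\langle\chi_g,\beta\rangle_v}$ is a Kummer-type symbol depending on the splitting of $v$ in a finite abelian extension $k(\zeta_{\exp G},\O_S^{\times 1/\exp G})$. Hence the coefficient $c_v(\beta):=\sum_g e^{2\pi i\langle\chi_g,\beta\rangle_v}$ is a Frobenius class function on a finite abelian Galois group $\Gal(F/k)$. Expanding it in characters gives $c_v(\beta)=\sum_\psi m_{\psi,\beta}\psi(\Frob_v)$, and so
\[
{\rm W}^{\beta,\eu{P}}_{k,N,H|_N}(s)=\prod_\psi L(as,\psi)^{m_{\psi,\beta}}\cdot E_\beta(s)\prod_{\mfp\in\eu{P}}f^\beta_\mfp(s).
\]
The correction $E_\beta(s)$ is an absolutely convergent Euler product for $\Re s>1/(a+1)$. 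Two facts justify this. The ratio of local factors is $1+O((\NN v)^{-2a\Re s}+(\NN v)^{-(a+1)\Re s})$, and $2a/(a+1)\ge1$ strictly exceeds the threshold except when $a=1$, in which case one should check that $2\Re s>2/(a+1)=1$ still holds. The finitely many $v\in S$ contribute finite sums, hence entire functions. Finally, the $f^\beta_\mfp$ are holomorphic there by \cref{lem: f mfp (s) are holomorphic}.

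Step two, for (i): Hecke $L$-functions are holomorphic and nonvanishing on $\Re(as)\ge1$, except for the pole of $\zeta_k$ at $as=1$. Hence the only pole in the region is at $s=1/a$, of order $m_{1,\beta}$. Counting gives $m_{1,\beta}=$ the average of $c_v(\beta)$ over Frobenius, which is at most the average number of minimal-weight $g$ with $\Frob$-compatible orders, without twist. By \cref{rem:number of sectors is Malle b} this count is the number of minimal weight sectors contained in $N$, hence $\le b(k,G,w)$. I expect the main obstacle to be controlling the meromorphic continuation strictly left of $\Re s=1/a$ down to $1/(a+1)$. The $L$-functions $L(as,\psi)$ are meromorphic everywhere, so only $E_\beta$ matters there, and the case distinction above on $a$ should handle it.

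Step three, for (iii): equality $m_{1,\beta}=b$ holds exactly when every minimal sector lies in $N$ and the twist is trivial on every minimal-weight character for all but finitely many $v$. If some twist value is a nontrivial root of unity on a positive density of Frobenius classes, the average is strictly smaller. Step four, for (ii): combine steps two and three with \cref{lem:counting series as a finite sum of euler products abelian case}(2). The top-order contributions come from $N=G$ and from those $\beta$ satisfying (iii). For these $\beta$, the leading Laurent coefficients are positive multiples of $\prod_\mfp f^\beta_\mfp(1/a)$. I will check that $f^\beta_\mfp(1/a)=f_\mfp(1/a)$ for such $\beta$; this follows from (iii) up to lower-order weight terms, though those might break the equality. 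If it holds, the leading coefficients add without cancellation, and they are nonzero by the hypothesis that $\prodflat_{\mfp\in\eu{P}} f_\mfp(1/a)\ne0$. This yields a pole of exact order $b$.
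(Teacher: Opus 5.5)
Your route differs from the paper's. The paper proves all three parts by citation: (i) from Alberts' continuation of twisted abelian counting series; (ii) from \cref{lem:counting series as a finite sum of euler products abelian case}, (i), and the Alberts--O'Dorney non-cancellation of leading poles; (iii) from Wood, Alberts and Tavernier. A direct comparison with Hecke $L$-functions could replace these citations, but your argument for (ii) has a genuine gap.

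\textbf{Non-cancellation in (ii).} The top-order poles come from \emph{every} subgroup $N$ containing all minimal-weight elements, as your own Step three says, not only from $N=G$. Moreover $\mu(N,G)$ is negative for some of these $N$. For $k=\Q$, $G=\Z/4\Z$ and the discriminant, the only element of minimal index is $2$, and $\mu(2\Z/4\Z,\Z/4\Z)=-1$. So the $N=G$ and $N=2\Z/4\Z$ contributions enter with opposite signs. There are two further problems with the positivity claim.
\begin{enumerate}
\item[(a)] The leading Laurent coefficient of ${\rm W}^{\beta,\eu{P}}_{k,N,H|_N}$ is not a positive multiple of $\prodflat_{\mfp\in\eu{P}}f^\beta_\mfp(1/a)$. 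It contains $E_\beta(1/a)$, which involves the twisted $S$-local sums $\sum_{\chi_v}e^{2\pi i\langle\chi_v,\beta_v\rangle_v}H'_v(\chi_v)^{-1/a}$ and twisted higher-weight terms. These are complex numbers, and the $S$-local sums can even vanish.
\item[(b)] As you suspected, $f^\beta_\mfp(1/a)\neq f_\mfp(1/a)$ in general, because higher-weight characters can carry a nontrivial twist.
\end{enumerate}
So ``the leading coefficients add without cancellation'' is unjustified. You need a genuine positivity input: positivity of the leading term of the $\Hom(\Gamma_k,G)$-count with the local conditions, plus the fact that surjections are a positive proportion of it. That is exactly what the Alberts--O'Dorney citation supplies.

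\textbf{Step one and (i), (iii).} The field $F=k(\zeta_{\exp G},\O_S^{\times1/\exp G})$ is in general \emph{not} abelian over $k$; for example $\Q(\zeta_3,2^{1/3})/\Q$ is not. So $c_v(\beta)$ cannot simply be expanded in Hecke characters of $\Gal(F/k)$. Expanding in irreducible characters instead gives Artin $L$-functions, whose holomorphy in the strip is unknown.

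Even in the abelian case you need $m_{\psi,\beta}\in\Z_{\ge0}$. For non-integral exponents, $L(as,\psi)^{m_{\psi,\beta}}$ is not meromorphic. For negative exponents, any zero of $L(as,\psi)$ with $a/(a+1)<\Re(as)<1$ becomes a pole. So it is not true that only $E_\beta$ matters to the left of $\Re s=1/a$.

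The missing observation is that $c_v(\beta)$ is the Frobenius trace of the monomial representation $\bigoplus_O\mathrm{Ind}_{k(\zeta_{m_O})}^{k}\theta_{g_O}$. Here $O$ runs over cyclotomic orbits of minimal-weight elements of $N$, and $g_O\in O$ has order $m_O$. The character $\theta_{g_O}$ is the Kummer character of $k(\zeta_{m_O})$ built from $\beta$ and $g_O$, so that $\theta_{g_O}(\Frob_w)=e^{2\pi i\langle\chi_{g_O},\beta\rangle_v}$ at degree-one primes $w\mid v$. The comparison product is then $\prod_O L_{k(\zeta_{m_O})}(as,\theta_{g_O})$: Hecke $L$-functions, each to the first power. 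Its pole order at $s=1/a$ is the number of orbits $O$ with $\theta_{g_O}$ trivial.

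\textbf{The ``if'' half of (iii).} Identifying that count with the pole order of ${\rm W}^\beta$ needs $E_\beta(1/a)\ne0$, and the $S$-local factors can vanish there. Take $k=\Q$, $G=N=\Z/4\Z$, $H=|\disc|$, and $\beta=2\otimes f$ with $f(1)=1/2$. The twist on minimal-weight characters is trivial at every $p\notin S$. Yet the factor at $v=2$ is $\sum_u H'_2(u)^{-s}\sum_{x\in\Z/4\Z}(-1)^x=0$, so ${\rm W}^\beta\equiv0$. Hence, without an extra non-vanishing hypothesis, your argument can only give the ``only if'' half of (iii). That is the only half the paper uses, in Step 4.
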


\begin{proof}
	The first statement in the lemma is exactly the meromorphic continuation established for twisted abelian counting series in \cite[Thm.\ A.2, Thm.\ A.4, proof of Thm.\ 8.1]{AlbertsPowerSavingsAbelianExtensions}.

	For the second statement in the lemma, use \cref{lem:counting series as a finite sum of euler products abelian case}, the first statement in the lemma, and the fact that the largest poles do not cancel by Alberts--O'Dorney \cite[\S4]{AlbertsODorney}.
	Indeed, their results apply to our setting because our height functions are admissible and Frobenian in their framework.

	The third statement in the lemma is implied for example by Wood \cite[Lem.\ 2.10]{WoodLocalAbelianProbabilities} and Alberts \cite[Prop.\ 2.2]{AlbertsTwistedMalle}, although some tracing through definitions is required.
	Tavernier \cite[\S 2.4]{TavernierRestrictedRamificationAbelianCount} states that $W_{k,N,H|_N}^{\beta}(s)$ (in her notation $\widehat{f}_{H,N(-1)^*}(\beta;s)$) has a rightmost pole of order $b_{N(-1)^*}(H|_N;\beta) \allowbreak \le b(k,N,w|_N)$ at $s = 1/a(N,w|_N)$, with equality of $b$-constants if and only if for all but finitely many primes $\mfp$ of $k$ avoiding $S$ one has that
	\[
	\sum_{\substack{\chi_{\mfp}:\O_{\mfp}^{\times} \to N  \\ w(\chi_{\mfp}(\zeta_{\mfp}')) = a(N,w|N)}} e^{2\pi i \langle \chi_{\mfp},\beta \rangle_{\mfp}} = 
	\sum_{\substack{\chi_{\mfp}:\O_{\mfp}^{\times} \to N  \\ w(\chi_{\mfp}(\zeta_{\mfp}')) = a(N,w|N)}} 1.
	\]
	This proves the desired statement, since $a(N,w|_N) \ge a(G,w)$ and $b(k,N,w|_N) \le b(k,G,w)$ are both equalities if and only if $N$ contains all minimal weight elements of $G$ with respect to $w$. 
\end{proof}

\begin{proposition}
	\label{prop:quantitative equidistribution for abelian extensions}
	Let $k,G,H,S$ be as in \cref{notation:Erdos Kac abelian extensions}.
	There exist positive constants $A,\delta > 0$, depending only on $k,G,H$, such that for every finite set $\eu{P} = \{\mfp_1,\ldots,\mfp_m\}$ of primes of $k$ avoiding $S$ and $X\ge 10$ one has that
	\[
	{\rm N}_{k,G,H}^{\eu{P}}(X) =  
	\underset{s=1/a(G,w)}{\Res} \left(\mc{N}_{k,G,H}^{\eu{P}}(s) X^s/s\right)
	+ O_{k,G,H}((\NN\mfp_1\dotsi\NN\mfp_m)^A X^{1/a(G,w)-\delta}).
	\]
\end{proposition}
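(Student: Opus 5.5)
The plan is to combine the finite-sum-of-Euler-products decomposition of \cref{lem:counting series as a finite sum of euler products abelian case}(2) with a uniform Tauberian/Perron argument for each summand, keeping track of how the error depends on the primes in $\eu{P}$. Write $a = a(G,w)$ and $b = b(k,G,w)$. By \cref{lem:counting series as a finite sum of euler products abelian case}(2),
\[
\mc{N}_{k,G,H}^{\eu{P}}(s) = \sum_{N \le G}\frac{\mu(N,G)}{\#{\rm Obs}(k,N,S)}\sum_{\beta\in{\rm Obs}(k,N,S)} {\rm W}_{k,N,H|_N}^{\beta}(s)\prodflat_{\mfp\in\eu{P}} f_\mfp^{N,\beta}(s).
\]
This is a sum with boundedly many terms, independent of $\eu{P}$. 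It therefore suffices to prove, for each fixed pair $(N,\beta)$, that the partial sums of the Dirichlet coefficients of ${\rm W}^{\beta,\eu{P}}_{k,N,H|_N}(s)$ up to $X$ equal the residue at $s=1/a$ of ${\rm W}^{\beta,\eu{P}}(s)X^s/s$, plus an error of size $(\NN\mfp_1\cdots\NN\mfp_m)^A X^{1/a-\delta}$. The residues then add up to the residue of $\mc{N}^{\eu{P}}(s)X^s/s$. Possible poles of the summands at $1/a(N,w|_N)<1/a$ lie in the region $\Re(s)<1/a$, which is shifted past anyway.

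For a fixed $(N,\beta)$ I would proceed in three steps. First, expand $\prodflat_{\mfp\in\eu{P}} f_\mfp^\beta(s)$ as a finite Dirichlet polynomial multiplied by the inverses of the local Euler factors at $\mfp$. Equivalently, regard ${\rm W}^{\beta,\eu{P}}$ as the Wright series with the local factor at each $\mfp\in\eu{P}$ replaced by its ramified part, which is a finite Dirichlet polynomial with at most $|G|$ terms, each of height at least $\NN\mfp^{a}$. Second, invoke the power-saving counting results with local conditions due to Frei--Loughran--Newton \cite{FreiLoughranNewtonPowerSavingsAbelianExtensions} and Alberts \cite{AlbertsPowerSavingsAbelianExtensions,AlbertsAveragedInputTauberianTheorems}. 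These results provide, for the unmodified twisted Wright series, a meromorphic continuation to $\Re(s)>1/a-\eta$ with polynomial growth $|{\rm W}^\beta(\sigma+it)|\ll(1+|t|)^{B}$ in vertical strips, apart from the pole at $1/a$ of order at most $b$. Third, apply a truncated Perron formula with a smoothing (or Alberts' averaged-input Tauberian theorem) to ${\rm W}^{\beta,\eu{P}}$ and shift the contour to $\Re(s)=1/a-\eta$. The residue at $1/a$ gives the main term. The shifted integral and the truncation error are bounded by $X^{1/a-\eta'}$ times the supremum of $|\prodflat_{\mfp} f_\mfp^\beta(s)|$ on the shifted line.

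To make the dependence on $\eu{P}$ explicit, note that on $\Re(s)\ge 1/a-\eta$ with $\eta<1/(a(a+1))$, the argument in the proof of \cref{lem: f mfp (s) are holomorphic} gives a denominator of $f_\mfp^\beta$ of absolute value at least $1-|G|\NN\mfp^{-a(1/a-\eta)}\ge c>0$ uniformly, because $\NN\mfp>|G|^2$ for $\mfp\notin S$. The numerator has absolute value at most $|G|$. Hence $|f_\mfp^\beta(s)|\le C_0$ for some constant $C_0$. Taking $A$ with $2^A\ge C_0$ gives $\prod_{\mfp\in\eu{P}}|f_\mfp^\beta(s)|\le C_0^m\le\prod_{\mfp\in\eu{P}}\NN\mfp^{A}$, which is the claimed polynomial dependence, and there is room to absorb any further polynomial losses, for instance from a non-smooth Perron cutoff.

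The main obstacle is the second step: obtaining the required convexity-type bound in vertical strips for the twisted Wright series themselves. This amounts to writing each ${\rm W}^{\beta}_{k,N,H|_N}$ as a product of powers of Hecke $L$-functions $L(as,\chi)$, for Frobenian characters $\chi$, times an Euler product absolutely convergent in $\Re(s)>1/(a+1)$. That factorization is precisely what is established in the cited works, so I would quote it (Alberts \cite[Thm.\ A.2, A.4]{AlbertsPowerSavingsAbelianExtensions}, as already used in \cref{lem:meromorphic continuation Wright and counting series abelian}) rather than reprove it. I would also check that their implied constants are uniform in the twist $\beta$, which is automatic because ${\rm Obs}(k,N,S)$ is finite.
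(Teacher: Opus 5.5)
Your proposal is correct and follows essentially the same route as the paper: you decompose via \cref{lem:counting series as a finite sum of euler products abelian case}, reduce to a single twisted Wright series ${\rm W}^{\beta,\eu{P}}$, and quote Alberts' growth bounds for the unmodified series. You then obtain the polynomial $\eu{P}$-dependence from a uniform bound on the modified local factors $f_\mfp^\beta(s)$ at $\mfp\in\eu{P}$, and feed this into Alberts' averaged-input Tauberian theorem (or Perron plus a contour shift).

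The one step you gloss over is the sharp-cutoff truncation error. For the complex coefficients of ${\rm W}^{\beta,\eu{P}}$, this error is controlled not by $\sup\bigl|\prodflat_{\mfp\in\eu{P}} f_\mfp^\beta\bigr|$ on the shifted line but by a nonnegative majorant. The paper supplies this by pairing ${\rm N}(X)$ with the nondecreasing summatory function $\widehat{{\rm N}}(X)$ of ${\rm W}^{0,\eu{P}}$, which is exactly the extra input Alberts' theorem requires.
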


Several weaker versions of \cref{prop:quantitative equidistribution for abelian extensions} are explicitly stated in the literature.
Frei, Loughran, and Newton \cite[Thm.\ 1.7]{FreiLoughranNewtonPowerSavingsAbelianExtensions} proved that
\[
{\rm N}_{k,G,\disc}^{\eu{P}}(X) =  
\underset{s=1/a(G,w)}{\Res} \left(\mc{N}_{k,G,\disc}^{\eu{P}}(s) X^s/s\right)
+ O_{k,G,\eu{P}}(X^{1/a(G,w)-\delta}),
\]
where both the $\eu{P}$-dependence in the big-O term and the constant $\delta > 0$ are inexplicit.
Later, Alberts \cite[Thm.\ A.2]{AlbertsPowerSavingsAbelianExtensions} proved an explicit constant $\delta > 0$; however, the $\eu{P}$-dependence in the big-O term remained inexplicit.
More recently, Alberts' explicit Tauberian theorem \cite[Thm.\ 2.1]{AlbertsAveragedInputTauberianTheorems} allows us to make the $\eu{P}$-dependence in the big-O term explicit in the form of \cref{prop:quantitative equidistribution for abelian extensions}.

\begin{proof}[Proof of \cref{prop:quantitative equidistribution for abelian extensions}]
	By \cref{lem:counting series as a finite sum of euler products abelian case},
	\[
	\mc{N}_{k,G,H}^{\eu{P}}(s)
	=
	\sum_{N \le G}
	\frac{\mu(N,G)}{\#{\rm Obs}(k,N,S)}
	\sum_{\beta \in {\rm Obs}(k,N,S)}
	{\rm W}_{k,N,H|_N}^{\beta,\eu{P}}(s).
	\]
	Since there are only finitely many pairs $(N,\beta)$, it suffices to prove a uniform power-saving estimate for each $W_{k,N,H|_N}^{\beta,\eu{P}}(s)$ individually.
	In fact, we may reduce to the case that $N = G$ and fix a parameter $\beta \in {\rm Obs}(k,G,S)$.
	
	Let ${\rm N}(X)$ denote the summatory function of the coefficients of ${\rm W}_{k,G,H}^{\beta,\eu{P}}(s)$ and $\widehat{{\rm N}}(X)$ denote the summatory function of the coefficients of ${\rm W}_{k,G,H}^{0,\eu{P}}(s)$.
	Notice for all $X \ge 10$ that $|{\rm N}(X)| \le \widehat{{\rm N}}(X)$ and $\widehat{{\rm N}}(X)$ is nondecreasing.
	Following the notation in Alberts \cite[\S2.1]{AlbertsAveragedInputTauberianTheorems}, we let
	\[
	L(s,{\rm N}) := W_{k,G,H}^{\beta,\eu{P}}(s), \qquad 
	L(s,\widehat{{\rm N}}) := W_{k,G,H}^{0,\eu{P}}(s)
	\]
	denote the Dirichlet series associated to our summatory functions.
	Notice by \cref{lem:meromorphic continuation Wright and counting series abelian}(i) that both $L$-series converge absolutely for $\Re(s) > 1/a(G,w)$ and admit meromorphic continuations to the half-plane $\Re(s) > 1/(a(G,w)+1)$, and its only possible pole is at $s = 1/a(G,w)$ with order $\le b(k,G,w)$.

	Let $({\rm N}',\beta') \in \{({\rm N},\beta),(\widehat{{\rm N}},0)\}$ and $\beta' \in \{\beta,0\}$.
	In order to apply \cite[Thm.\ 2.1]{AlbertsAveragedInputTauberianTheorems}, we must provide bounds on the \textit{average twisted value}
	\[
	\int_{10}^{T} L(\sigma+it,{\rm N}') Z^{it}\, dt
	\]
	where $T \ge 10, Z\ge 10$ are parameters and $\sigma,t \in \R$ with
	\[
	\frac{1}{a(G,w)+1/2} \le \sigma \le \frac{1}{a(G,w)} + 1.
	\]
	We trivially bound the average twisted value via
	\[
	\left|\int_{10}^{T} L(\sigma+it,{\rm N}) Z^{it}\,dt\right| \le 
	T \max_{10 \le t \le T} |L(\sigma+it,{\rm N})|.
	\]
	As in the proof of \cite[Thm.\ 8.1]{AlbertsPowerSavingsAbelianExtensions}, there exists $\xi > 0$ (depending only on $k,G,H$) and a constant $Q_{\eu{P}} > 0$ (depending only on $k,G,H,\eu{P}$) such that for all $\varepsilon>0$ and $10 \le t \le T$ one has that
	\[
	|L(\sigma+it,{\rm N}')| \ll_{\varepsilon} Q_{\eu{P}} \cdot T^{\xi+\varepsilon}.
	\]

	Tracing through the proof of \cite[Thm.\ 8.1]{AlbertsPowerSavingsAbelianExtensions}, one finds that $Q_{\eu{P}}$ arises from a triangle inequality bound on the numerator of the local ramification factors $f_{\mfp}^{\beta'}(s)$ for $\mfp \in \eu{P}$ and $s = \sigma+it$ (see \cref{def:abelian ramification factors f mfp}).
	Because $\eu{P}$ avoids $S \supset {\rm Exc}(H)$, each modified local factor is a finite Dirichlet polynomial in $(\NN\mfp)^{-s}$ that does \textit{not} depend on $\eu{P}$.
	Thus, there exists $A>0$ depending only on $k,G,H$ such that by writing $\eu{P} = \{\mfp_1,\ldots,\mfp_m\}$ one has that
	\[
	Q_{\eu{P}} \ll_{k,G,H} (\NN\mfp_1 \dotsi \NN\mfp_m)^{A}.
	\]

	Thus, we have verified the hypotheses of \cite[\S2.1]{AlbertsAveragedInputTauberianTheorems} for the summatory functions ${\rm N}(X),\widehat{{\rm N}}(X)$ and may therefore apply \cite[Thm.\ 2.1]{AlbertsAveragedInputTauberianTheorems} with parameters $\eta = \widetilde{\eta} := \xi+1+\varepsilon$ (depending only on $k,G,H,\varepsilon$) and $\beta := 0$ in the notation of \textit{loc.\ cit.}
	Setting $T$ to be a fixed power of $X$ yields a power-saving estimate for ${\rm N}(X)$ with implied constant of the form
	\[
	O_{k,G,H,\varepsilon}\left(
	Q_{\eu{P}} + \max_{|t| \le 10} \left|\frac{L(\sigma+it,{\rm N})}{\sigma+it}\right| +
	\max_{|t| \le 10} \left|\frac{L(\sigma+it,\widehat{{\rm N}})}{\sigma+it}\right|
	\right) =
	O_{k,G,H,\varepsilon}\left(
	Q_{\eu{P}}
	\right).
	\]
	This finishes the proof.
\end{proof}

\subsection{Proof of \cref{thm:Erdos Kac abelian extensions}}
\label{subsec:proof of Erdos Kac abelian extensions}

\begin{proof}[Proof of \cref{thm:Erdos Kac abelian extensions}]

	Let $k,G,w,H,S$ be as in \cref{notation:Erdos Kac abelian extensions}.
	We are able to apply \cref{thm:EK criterion for ideals} by breaking the checks into several steps.
	
	\textit{Step 1. Recognize an ordered sequence of ideals (\cref{def:ordered sequence of ideals in Ok}).}
	We define the indexed set
	\[
	\msF_{k,G} := \{\disc(\varphi) : \varphi \in \Sur(\Gamma_k,G)\}.
	\]
	Then our height function $H:\Hom(\Gamma_k,G) \to \R_{> 0}$ gives a function on $\msF_{k,G}$ that is Northcott by Hermite's theorem, hence $(\msF_{k,G},H)$ is an ordered sequence of ideals in $\O_k$.
	
	\textit{Step 2. Define a Tauberian mixture (\cref{def:Tauberian mixture for ideals}).}
		We define the Tauberian pre-mixture $\mc{T}_{k,G,H}$ via the data:
		\begin{itemize}
			\item $\Lambda(\mc{T}_{k,G,H}) := \coprod_{N \le G} {\rm Obs}(k,N,S)$;
			\item $s_0(\mc{T}_{k,G,H}) := 1/a(G,w)$;
			\item $b(\mc{T}_{k,G,H}) := b(k,G,w)$;
			\item for each $(N,\beta) \in \Lambda(\mc{T}_{k,G,H})$,
			\[
			D_{(N,\beta)}(\mc{T}_{k,G,H};s) := \frac{\mu(N,G)}{\#{\rm Obs}(k,N,S)} {\rm W}_{k,N,H|_N}^{\beta}(s).
			\]
		\end{itemize}
		By \cref{lem:meromorphic continuation Wright and counting series abelian}, $\mc{T}_{k,G,H}$ is a Tauberian mixture.

	\textit{Step 3. Define a Bernoulli process (\cref{def:Bernoulli process for ideals}).}
	We define a Bernoulli process $\mc{B}_{k,G,H}$ over $k$ given by the following data:
	\begin{itemize}
		\item $S(\mc{B}_{k,G,H}) := \{\mfp \subset \O_k \text{ prime} : \mfp \in S\}$;
		\item $\mc{T}(\mc{B}_{k,G,H}) := \mc{T}_{k,G,H}$;
		\item $X_0 \ge 10$ is chosen so that for all $X\ge X_0$ one has that
		\[
		\underset{s = 1/a(G,w)}{\Res} \left(
			\mc{N}_{k,G,H}(s) X^s/s
		\right) \ne 0.
		\]
		\item For every prime $\mfp \notin S$, define $\boldsymbol{f}_{\mfp}(\mc{B}_{k,G,H};s) := \Big(f_{\mfp}^{N,\beta}(s)\Big)_{\substack{N \le G \\ \beta \in {\rm Obs}(k,N,S)}}$ where $f_{\mfp}^{N,\beta}(s)$ are the local ramification factors defined in \cref{def:abelian ramification factors f mfp}.
	\end{itemize}

	\textit{Step 4. Check that $\mc{B}_{k,G,H}$ is well-mixed (\cref{def:well-mixed Bernoulli process}).}
	\begin{itemize}
			\item[] \textit{Mixture uniformity.} 
			Let $s_0 = s_0(\mc{T}_{k,G,H}) = 1/a(G,w)$.
			We observe for all $(N,\beta) \in \Lambda$ that $f_{\mfp}^{N,\beta}(s)$ is a ratio of Dirichlet polynomials (see \cref{def:abelian ramification factors f mfp}) whose nonconstant monomial terms are $O((\NN\mfp)^{-a(G,w)s})$ near $s = 1/a(G,w)$, the denominator of $f_{\mfp}^{N,\beta}(s)$ evaluated at $s_0$ is uniformly bounded away from zero as $\mfp$ varies by the proof of \cref{lem: f mfp (s) are holomorphic}, and therefore by the chain rule one has for all $r \in \Z_{\ge 0}$ that
			\[
			\frac{d^r}{ds^r} f_{\mfp}^{\beta}(s)\bigg|_{s = 1/a(G,w)} \ll_{k,G,w,r} \frac{(\log \NN \mfp)^r}{\NN\mfp}.
			\]
			\item[] \textit{Leading order conditional probabilities are similar.}
			The set $\Lambda^+(\mc{T}_{k,G,H})$ consists of the pairs $(N,\beta)$ such that $D_{(N,\beta)}(\mc{T}_{k,G,H};s)$ has a pole at $s = 1/a(G,w)$ of largest possible order.
			Equivalently, $\mu(N,G)\neq 0$ and $W_{k,N,H|_N}^{\beta}(s)$ has a pole at $s = 1/a(G,w)$ of largest possible order.
			By \cref{lem:meromorphic continuation Wright and counting series abelian}, an upper bound on the order of this pole is given by $b(k,N,w|_N) \le b(k,G,w)$, hence it is necessary for the subgroup $N \le G$ to satisfy $a(N,w|_N) = a(G,w)$ and $b(k,N,w|_N) = b(k,G,w)$ in order to contribute to the set $\Lambda^+(\mc{T}_{k,G,H})$.
			By \cref{lem:meromorphic continuation Wright and counting series abelian}(iii), we may enlarge our finite set of places $S$ so that for all primes $\mfp \notin S$ and $(N,\beta) \in \Lambda^+(\mc{T}_{k,G,H})$ we have that
		\begin{equation}
		\label{eq:abelian EK leading order conditional probabilities}
		f_{\mfp}^{N,\beta}(s_0)  = \frac{\nu_{k,G,w,\Frob_{\mfp}}}{\NN\mfp} 
		+ O_{G}\left(\frac{1}{(\NN\mfp)^{1+1/a(G,w)}}\right)
		\end{equation}
		where $\nu_{k,G,w,\sigma}$ is the number of minimal weight elements of $G\backslash\{1\}$ invariant under $g \mapsto g^{{\rm cyc}^{-1}(\sigma)}$ where ${\rm cyc}^{-1}:\Gal(k(\zeta_{\exp(G)})/k)\to (\Z/\exp(G)\Z)^\times$ is the anticyclotomic character.
		This proves the desired claim that \cref{def:well-mixed Bernoulli process}(ii) holds for $\mc{B}_{k,G,H}$ with $\alpha = 1/a(G,w)$.
	\end{itemize}

	\textit{Step 5. Check reasonability (\cref{def:reasonable sequence for ideals}).}
		We have to check two things.
		\begin{itemize}
			\item[] \textit{Large prime tails are sparse.} This follows from \cref{lem:restricted discriminants have sparse tails}.
			\item[] \textit{Approximation by $\mc{B}_{k,G,H}$.} This follows from \cref{prop:quantitative equidistribution for abelian extensions}.
	\end{itemize}

	Steps 1--5 prove that $(\msF_{k,G},H)$ is a reasonable sequence of ideals with respect to a well-mixed Bernoulli process $\mc{B}_{k,G,H}$; therefore, in order to apply all parts of \cref{thm:EK criterion for ideals} it suffices to compute the normalized algebraic mean and variance of the number of ramified primes along the sequence and show that the algebraic mean converges at a sufficient rate.
	
	\textit{Step 6. The mean and variance are of order $b \log\log X$.}
	In view of \cref{eq:abelian EK leading order conditional probabilities}, it suffices to understand the average value of $\nu_{k,G,w,\Frob_{\mfp}}$ as $\mfp$ varies over primes of $k$.
	The expected value of $\nu_{k,G,w,\sigma}$ where $\sigma \in \Gal(k(\zeta_{\exp(G)})/k)$ is uniformly chosen is equal to the average number of fixed points for the set of minimal weight conjugacy classes of $G$ under the anticyclotomic action over $k$, which by Burnside's lemma is equal to the number of orbits of this action, and by \cref{rem:number of sectors is Malle b} this is equal to $b(k,G,w)$.
		Thus, we have by the Chebotarev density theorem and Mertens' theorem over $k$ that
		\begin{equation}
		\label{eq:rate of convergence of algebraic mean EK abelian}
		\sum_{\substack{\NN \mfp \leq X \\ \mfp \notin S}} \frac{\nu_{k,G,w,\Frob_\mfp}}{\NN\mfp} =
		b(k,G,w) \log\log X + O_{k,G,w}(1)
		\end{equation}
		as $X \to \infty$.
	This implies that we have
	\[
	\mu = V = \sum_{(N,\beta) \in \Lambda^+} w_{(N,\beta)}(\infty)
	\lim_{X \to \infty} 
	\frac{1}{\log\log X}
	\sum_{\substack{\mfp \notin S \\ \NN\mfp \le X}} f_{\mfp}^{N,\beta}(s_0) = 
	b(k,G,w) \sum_{\lambda \in \Lambda^+} w_{\lambda}(\infty) = b(k,G,w).
	\]
	Moreover, \cref{eq:abelian EK leading order conditional probabilities,eq:rate of convergence of algebraic mean EK abelian} show that the rate of convergence of the above expression is $O(\frac{1}{\log\log X}) = o((\log\log X)^{-1/2})$, therefore \cref{thm:EK criterion for ideals}(iii) applies. 

	This finishes the proof.
\end{proof}

\section{Probability theory II: a method of cumulants}
\label{sec:probability theory II}

In this section, we prove \cref{thm:EK criterion for ideals} using the method of cumulants.

\begin{notation}
	\label{notation:probability theory II}
	Throughout this section, we let $k$ be a number field and fix a well-mixed Bernoulli process
	\[
	\mc{B} =
	\mc{B}\Big(S,\mc{T}\Big(\Lambda,s_0,b,(D_\lambda(s))_{\lambda \in \Lambda}\Big),X_0,(\boldsymbol{f}_{\mfp}(s))_{\mfp \notin S}\Big)
	\]
	over $k$ with parameter $\alpha > 0$ appearing in \cref{def:well-mixed Bernoulli process}(ii) for $\mc{B}$.
	We fix an ordered sequence of ideals $(\msF,H)$ in $\O_k$ that is reasonable with respect to $\mc{B}$ with sequence of positive constants $(A_m,\delta_m)_{m\in\N}$ appearing in the reasonability condition for $(\msF,H)$ with respect to $\mc{B}$ (see \cref{def:reasonable sequence for ideals}(ii)).
	Finally, we fix a strongly additive function $\omega : \mf{N}_k \to \R$ with $\Vert \omega \Vert_{\rm sup} < \infty$.

	We write $\boldsymbol{f}_{\mfp}(s)=(f_{\mfp}^{\lambda}(s))_{\lambda\in\Lambda}$ and $\Lambda^+=\Lambda^+(\mc{T})$.
	For all $X\ge X_0$ and $\lambda \in \Lambda$, we write
	\[
	w_{\lambda}(X):=
	\frac{\underset{s=s_0}{\Res}\big(D_\lambda(s)X^s/s\big)}
	{\underset{s=s_0}{\Res}\big(\sum_{\lambda'\in\Lambda}D_{\lambda'}(s)X^s/s\big)}.
	\]
	In particular, $\sum_{\lambda\in\Lambda}w_{\lambda}(X)=1$ and by \cref{rem:residue is a polynomial in X} the limits $w_\lambda(\infty):=\lim\limits_{X\to\infty}w_\lambda(X)$ exist.
\end{notation}

\begin{framed}
	In this section, all implied constants are allowed to depend on the fixed number field $k$, well-mixed Bernoulli process $\mc{B}$ with parameter $\alpha > 0$ appearing in \cref{def:well-mixed Bernoulli process}(ii) for $\mc{B}$, ordered sequence of ideals $(\msF,H)$ in $\O_k$ that is reasonable with respect to $\mc{B}$ with sequence of positive constants $(A_m,\delta_m)_{m \in \N}$, and strongly additive function $\omega : \mf{N}_k \to \R$ with $\Vert \omega \Vert_{\rm sup} < \infty$.
\end{framed}

\subsection{Algebraic cumulants}

Given a finite set $J$, let $\Pi_J$ be the lattice of set partitions of $J$ ordered by refinement $\preceq$, i.e.\ two partitions $\sigma,\tau$ satisfy $\sigma \preceq \tau$ if and only if $\sigma$ is a refinement of $\tau$.
We write $\widehat{1} \in \Pi_J$ for the maximal element, i.e.\ $\widehat{1} := \{J\}$.
If $m\in\N$ and $\eu{P}$ is an $m$-tuple (of elements of some set) with indexing set $J=\{1,\ldots,m\}$, we write $\Pi(\eu{P})$ for $\Pi_J$, and interpret a subset $B\subset J$ (also called a \textit{block}) as the indexing set of a subtuple of $\eu{P}$.
We write $\mu(\pi) := \mu_J(\pi,\widehat{1})$ for the Möbius function of $(\Pi_J,\preceq)$ evaluated on the interval from $\pi$ to $\widehat{1}$.

\begin{definition}
	For every finite tuple $\eu{P}$ of primes of $k$ avoiding $S$ and every $X\ge X_0$, we define the \textit{algebraic $\eu{P}$-cumulant}
	\[
	\widehat{\kappa}_{\mc{B},X}(\eu{P})
	:=
	\sum_{\pi\in\Pi(\eu{P})}\mu(\pi)\prod_{B\in\pi}\widehat{M}_{\mc{B},X}(B),
	\]
	and similarly for all $\lambda\in\Lambda$ we define the \textit{$\lambda$-conditional algebraic $\eu{P}$-cumulant}
	\[
	\widehat{\kappa}_{\mc{B},X}(\eu{P}\mid\lambda)
	:=
	\sum_{\pi\in\Pi(\eu{P})}\mu(\pi)\prod_{B\in\pi}\widehat{M}_{\mc{B},X}(B\mid\lambda).
	\]
\end{definition}

For example, one has that
\[
\widehat{\kappa}_{\mc{B},X}(\mfp)=\widehat{M}_{\mc{B},X}(\mfp),
\]
\[
\widehat{\kappa}_{\mc{B},X}(\mfp_1,\mfp_2)=
\begin{cases}
\widehat{M}_{\mc{B},X}(\mfp_1,\mfp_2)-\widehat{M}_{\mc{B},X}(\mfp_1)\widehat{M}_{\mc{B},X}(\mfp_2) & \mfp_1\ne\mfp_2,\\
\widehat{M}_{\mc{B},X}(\mfp)-\widehat{M}_{\mc{B},X}(\mfp)^2 & \mfp_1=\mfp_2=\mfp.
\end{cases}
\]

\begin{definition}
Given $X\ge X_0$, a finite set $J$, and functions $F_j:\Lambda\to\C$ indexed by $j \in J$, we define the \textit{joint cumulant}
\[
\kappa_{\Lambda,\mc{T},X}((F_j:j\in J))
:=
\sum_{\pi\in\Pi_J}\mu(\pi)
\prod_{B\in\pi}
\sum_{\lambda\in\Lambda}w_\lambda(X)\prod_{j\in B}F_j(\lambda).
\]
\end{definition}

\begin{lemma}[Law of total cumulance]
	\label{lem:law of total cumulance}
	For every finite tuple $\eu{P}$ of primes of $k$ avoiding $S$ and $X\ge X_0$ one has that
	\[
	\widehat{\kappa}_{\mc{B},X}(\eu{P})
	=
	\sum_{\pi\in\Pi(\eu{P})}
	\kappa_{\Lambda,\mc{T},X}((\widehat{\kappa}_{\mc{B},X}(B\mid\lambda):B\in\pi)).
	\]
\end{lemma}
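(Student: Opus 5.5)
This is the law of total cumulance (Brillinger's formula), specialized to the mixture structure. The proof is purely combinatorial, resting on Möbius inversion on the partition lattice. Fix $\eu{P}$ with indexing set $J$. The first step is to record the law of total expectation for moments. Since conditional moments are defined blockwise by residues of the $\lambda$-th component, the law of total expectation in \cref{construction:algebraic probability space associated to Tauberian mixture} gives, for every block $B \subset J$,
\[
\widehat{M}_{\mc{B},X}(B) = \sum_{\lambda\in\Lambda} w_\lambda(X)\, \widehat{M}_{\mc{B},X}(B\mid\lambda).
\]
For the $\lambda$ with no pole at $s_0$, \cref{rem:residue is a polynomial in X} gives $w_\lambda(X)=0$, so the convention of \cref{rem:convention for measure zero conditional expectation} plays no role.

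The second step inverts the definition of conditional cumulants. By Möbius inversion on $\Pi_B$, the definition of $\widehat{\kappa}_{\mc{B},X}(\cdot\mid\lambda)$ is equivalent to the moment–cumulant formula
\[
\widehat{M}_{\mc{B},X}(B\mid\lambda) = \sum_{\sigma\in\Pi_B}\prod_{C\in\sigma}\widehat{\kappa}_{\mc{B},X}(C\mid\lambda).
\]
This uses that $\mu(\pi)=\mu(\pi,\widehat{1})$ factors over blocks: intervals $[\pi,\widehat 1]$ in $\Pi_J$ are isomorphic to $\Pi_{\pi}$, and the cumulant definition is applied recursively to subtuples. Substituting into the first step gives
\[
\widehat{M}_{\mc{B},X}(B) = \sum_{\sigma\in\Pi_B}\sum_\lambda w_\lambda(X)\prod_{C\in\sigma}\widehat{\kappa}_{\mc{B},X}(C\mid\lambda).
\]

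The third step verifies that the proposed right-hand side has these moments. Define $R(\eu{P})$ to be the right-hand side of the lemma, extended to all subtuples $B$. I would show that $\sum_{\rho\in\Pi_B}\prod_{D\in\rho}R(D)$ equals the expression just obtained for $\widehat{M}_{\mc{B},X}(B)$. Expand $R(D)$ as a sum over $\pi\in\Pi_D$ of joint cumulants of the variables $F_C(\lambda):=\widehat\kappa(C\mid\lambda)$ for $C\in\pi$. The double sum over $\rho$ and the blocks $\pi_D$ then reorganizes as a sum over $\sigma\in\Pi_B$ (the union of the $\pi_D$) together with a partition of the set of blocks of $\sigma$ (namely $\rho$ pulled back). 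Summing the products of joint cumulants over all partitions of the blocks of $\sigma$ gives the joint moment $\sum_\lambda w_\lambda(X)\prod_{C\in\sigma}F_C(\lambda)$. This is just Möbius inversion applied to the definition of $\kappa_{\Lambda,\mc{T},X}$, which is the cumulant of a virtual probability measure with weights $w_\lambda(X)$ summing to $1$. The result matches the second step.

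Finally, since moments determine cumulants via Möbius inversion on each $\Pi_B$ (by induction on $|B|$, using the unique top term $\widehat 1$), we conclude $R(\eu{P})=\widehat{\kappa}_{\mc{B},X}(\eu{P})$. The main obstacle is the third step: making rigorous the bijection between pairs $(\rho,(\pi_D)_{D\in\rho})$ and pairs $(\sigma,\text{partition of the blocks of }\sigma)$, where $\sigma$ is the union of the $\pi_D$. Repeated primes in $\eu{P}$ also need care, but they cause no issue because everything is indexed by $J$, and the flat products only enter through the moments $\widehat M$, which are treated as given functions of blocks.
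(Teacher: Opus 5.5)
Your proposal is correct and follows the paper's route. The paper simply invokes the standard law of total cumulance (Brillinger's formula), justified by the tower property $\widehat{\mathbb{E}}_{\mc{T},X}=\widehat{\mathbb{E}}_{\Lambda,\mc{T},X}\circ\widehat{\mathbb{E}}_{\mc{T},X}[-\mid\Lambda]$, whereas you reprove that formula directly via M\"obius inversion and the bijection between pairs $(\rho,(\pi_D)_{D\in\rho})$ and pairs $(\sigma,\tau)$ with $\tau\in\Pi(\sigma)$; that argument is valid. One small correction: the moment--cumulant inversion in your second step rests on the factorization $\mu(\sigma,\pi)=\prod_{C\in\pi}\mu(\sigma|_C,\widehat{1}_C)$ over the blocks of $\pi$, not on a factorization of $\mu(\pi,\widehat{1})$, which depends only on $|\pi|$.
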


\begin{proof}
	This is the usual law of total cumulance applied to the algebraic probability space $\mc{A}_0(\mc{T})$ and the subspace $\C^\Lambda$.
	Indeed, the conditional expectation from \cref{construction:algebraic probability space associated to Tauberian mixture} is $\C$-linear and unital, and by definition
	\[
	\widehat{\mathbb{E}}_{\mc{T},X}
	=
	\widehat{\mathbb{E}}_{\Lambda,\mc{T},X}\circ
	\widehat{\mathbb{E}}_{\mc{T},X}[-\mid\Lambda].
	\]
	This finishes the proof.
\end{proof}

It immediately follows from the law of total cumulance that we may write
\begin{equation}
\label{eq:law of total cumulance}
\widehat{\kappa}_{\mc{B},X}(\eu{P})
=
\sum_{\lambda\in\Lambda}w_\lambda(X)\widehat{\kappa}_{\mc{B},X}(\eu{P}\mid\lambda)
+
\sum_{\substack{\pi\in\Pi(\eu{P})\\ |\pi|\ge 2}}
\kappa_{\Lambda,\mc{T},X}((\widehat{\kappa}_{\mc{B},X}(B\mid\lambda):B\in\pi)).
\end{equation}
We refer to the cumulants in the first sum as \textit{pure cumulants}, and the cumulants in the second sum as \textit{mixed cumulants}.

\subsection{Bounding algebraic cumulants}
\label{subsec:bounding algebraic cumulants}

Let $\lambda\in\Lambda$ and $X\ge X_0$.
By \cref{rem:convention for measure zero conditional expectation}, we may assume that $D_{\lambda}(s)$ has a pole at $s=s_0$, in which case we introduce the shorthand notation
\[
\mc{D}_{s,X}^{\lambda}[g(s)]
:=
\frac{\underset{s=s_0}{\Res}\big(g(s)D_\lambda(s)X^s/s\big)}
{\underset{s=s_0}{\Res}\big(D_\lambda(s)X^s/s\big)}.
\]
Since $D_\lambda(s)$ has a pole of order at most $b$ at $s=s_0$, it follows from the Cauchy residue theorem that $\mc{D}_{s,X}^{\lambda}$ is a differential operator of order at most $b-1$ of the form
\[
\mc{D}_{s,X}^{\lambda}
=
\left(1+\sum_{r=1}^{b-1}\frac{1}{r!}\frac{P_\lambda^{(r)}(\log X)}{P_\lambda(\log X)}\frac{\partial^r}{\partial s^r}\right)\Bigg|_{s=s_0}
\]
where $P_\lambda(t) \in \C[t]$ is a nonzero polynomial of degree at most $b-1$.

For a tuple $\eu{P}$ of primes avoiding $S$, we define the \textit{$\lambda$-conditional cumulant generating function}
\begin{equation}
\label{eq:cumulant generating function Bernoulli version}
\widehat{K}_X^\lambda(\eu{P};\mathbf{t})
:=
\log \mc{D}_{s,X}^{\lambda}\left[
\prodflat_{\mfp\in\eu{P}}
\bigl(1+(e^{t_\mfp}-1)f_\mfp^\lambda(s)\bigr)
\right],
\end{equation}
where $\mathbf{t}=(t_\mfp)_{\mfp\in\eu{P}}^{\flat}$.
Here we have implemented an algebraic probability version of the fact from classical probability theory that if $Z$ is a Bernoulli random variable, then
\[
e^{tZ} = 1 + (e^t-1)Z,
\]
since $Z^i = Z$ for all $i \in \Z_{\ge 1}$.
If $m_\mfp \ge 1$ is the multiplicity of $\mfp$ in $\eu{P}$ and $m$ is the index set cardinality of $\eu{P}$, then the standard Taylor expansion for joint cumulants gives
\begin{equation}
\label{eq:cumulant Taylor series coefficients}
\widehat{\kappa}_{\mc{B},X}(\eu{P}\mid\lambda)
=
\frac{\partial^m}{\sideset{}{^{\flat}_{\mfp\in\eu{P}}}{\prod}\partial t_\mfp^{m_\mfp}}
\widehat{K}_X^\lambda(\eu{P};\mathbf{t})\bigg|_{\mathbf{t}=0}.
\end{equation}

\begin{lemma}[Pure cumulant bound]
	\label{lem:pure cumulant bound}
	Let $X\ge 10$, $m\in \Z_{\ge 2}$, and $\eu{P}$ an $m$-tuple of primes of $k$ avoiding $S$.
	If $\eu{P}$ contains at least two distinct primes and $\NN\mfp\le X$ for every $\mfp\in\eu{P}$, then for all $\lambda\in\Lambda$ one has that
	\[
	\widehat{\kappa}_{\mc{B},X}(\eu{P}\mid\lambda)
	\ll_m
	\prodflat_{\mfp\in\eu{P}}
	\frac{1}{\NN\mfp}\frac{\log\NN\mfp}{\log X}.
	\]
\end{lemma}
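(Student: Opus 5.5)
The plan is to work directly from the cumulant generating function \cref{eq:cumulant generating function Bernoulli version} and the Taylor coefficient formula \cref{eq:cumulant Taylor series coefficients}. Write the distinct primes of $\eu{P}$ as $\mfp_1,\ldots,\mfp_n$ with $n\ge 2$, and set $g_j(s) := 1+(e^{t_j}-1)f_{\mfp_j}^\lambda(s)$. Expand the differential operator $\mc{D}_{s,X}^{\lambda}$ applied to $\prod_j g_j(s)$ by the Leibniz rule. Each derivative $\partial_s^r$ distributes over the factors as $\sum_{r_1+\dots+r_n=r}\binom{r}{r_1,\ldots,r_n}\prod_j g_j^{(r_j)}(s_0)$. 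Using the coefficient bound $P_\lambda^{(r)}(\log X)/P_\lambda(\log X) \ll (\log X)^{-r}$, we can write
\[
\mc{D}^\lambda_{s,X}\Big[\prod_j g_j\Big] = \sum_{\mathbf{r}} c_{\mathbf{r}}(X) \prod_j \frac{g_j^{(r_j)}(s_0)}{(\log X)^{r_j}}
\]
with $c_{\mathbf r}(X)=O(1)$ and $c_{\mathbf{0}}=1$. For $r_j\ge1$ we have $g_j^{(r_j)}(s_0) = (e^{t_j}-1)\,\partial^{r_j}f^\lambda_{\mfp_j}(s_0)$, which by mixture uniformity (\cref{def:well-mixed Bernoulli process}(i)) is $\ll (e^{t_j}-1)(\log\NN\mfp_j)^{r_j}/\NN\mfp_j$.

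The key structural point is that the $\mathbf{r}=\mathbf{0}$ term is $\prod_j g_j(s_0)$, which factors completely. Its logarithm is therefore a sum of single-variable functions and contributes nothing to a mixed derivative in $n\ge2$ distinct variables. This is conditional independence in the limit. I would write
\[
\mc{D}^\lambda_{s,X}\Big[\prod_j g_j\Big] = \prod_j g_j(s_0)\,(1+E(\mathbf t)),
\]
where $E$ collects the terms with $\mathbf{r}\ne\mathbf0$ divided by $\prod_j g_j(s_0)$. Then $\widehat K^\lambda_X=\sum_j\log g_j(s_0)+\log(1+E)$, and only $\log(1+E)$ contributes to the mixed derivative.

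The next step is to expand $\log(1+E)=\sum_{\ell}(-1)^{\ell+1}E^\ell/\ell$ as a power series in the variables $u_j := e^{t_j}-1$. Since $f^\lambda_{\mfp_j}(s_0)\ll 1/\NN\mfp_j$ and the primes avoid $S$, the factors $1/g_j(s_0)$ are analytic near $\mathbf t=0$ with bounded coefficients. A monomial $\prod_j u_j^{e_j}$ in $\log(1+E)$ arises from products of terms of $E$. Every variable $u_j$ in such a monomial must come either from a factor $g_j^{(r_j)}$ with $r_j\ge1$, which gives $(\log\NN\mfp_j/\log X)^{r_j}/\NN\mfp_j$ (using $\NN\mfp_j\le X$ to keep $(\log\NN\mfp_j/\log X)^{r_j}\le \log\NN\mfp_j/\log X$), or from expanding $1/g_j(s_0)$ or $g_j(s_0)$, which gives powers of $f^\lambda_{\mfp_j}(s_0)\ll1/\NN\mfp_j$ without a logarithmic saving.

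The main obstacle is showing that every monomial involving all $n$ variables carries, for each $j$, a factor $\log\NN\mfp_j/(\NN\mfp_j\log X)$, and not merely $1/\NN\mfp_j$. My first attempt would be to prove a cleaner identity: $\log\mc{D}^\lambda_{s,X}[\prod_j g_j]-\sum_j\log\mc{D}^\lambda_{s,X}[g_j]$ vanishes whenever some $\mfp_j$ is ``undifferentiated.'' Concretely, I would replace each $f^\lambda_{\mfp_j}(s)$ by $f^\lambda_{\mfp_j}(s_0)+h_j(s)$, with $h_j(s) = O(|s-s_0|\log\NN\mfp_j/\NN\mfp_j)$ as a germ, and treat the constant parts as independent. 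I would then apply multilinearity of joint cumulants in the spirit of \cref{lem:law of total cumulance}, splitting each Bernoulli germ $f_{\mfp_j}=f_{\mfp_j}(s_0)+h_j$. Joint cumulants involving a constant argument vanish when $n\ge2$, so only the terms where each distinct prime contributes an $h_j$ survive. Each such $h_j$ must then be hit by a derivative inside $\mc{D}$ to be nonzero at $s_0$, and that yields the factor $\log\NN\mfp_j/\log X$. Handling repeated primes (multiplicities $m_\mfp$) requires care: a repeated Bernoulli variable is expressed through $Z^i=Z$, which the generating function already encodes, so the multiplicities only affect the implied constant depending on $m$. The remaining routine task is to bound the combinatorial coefficients, which depend only on $m$ and $b$.
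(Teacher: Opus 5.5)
Your setup matches the paper's. Writing $\mc{D}^\lambda_{s,X}[\prod_j g_j]=\prod_j g_j(s_0)\,(1+E(\mathbf t))$ is exactly the paper's step of subtracting $\sum_j\log g_j(s_0)$, whose mixed Taylor coefficients vanish. The gap is that the proof stalls at your ``main obstacle'', and that obstacle comes from a missed exact cancellation. In the term of $E$ indexed by $\mathbf r\ne\mathbf 0$, each $j$ with $r_j=0$ contributes $g_j(s_0)/g_j(s_0)=1$, so
\[
E(\mathbf t)=\sum_{\mathbf r\ne\mathbf 0}c_{\mathbf r}(X)\prod_{j:\,r_j\ge 1}\frac{(e^{t_j}-1)\,\partial_s^{r_j}f^\lambda_{\mfp_j}(s_0)}{(\log X)^{r_j}\,g_j(s_0)} .
\]
Consequences:
\begin{itemize}
\item $t_j$ never enters through an undifferentiated $g_j(s_0)$ or $1/g_j(s_0)$.
\item Every term of $E$ containing $t_j$ carries a factor $\ll\frac{1}{\NN\mfp_j}\frac{\log\NN\mfp_j}{\log X}$, using $\NN\mfp_j\le X$.
\item The extra powers of $e^{t_j}-1$ from expanding $1/g_j(s_0)$ occur only in terms that already carry this factor.
\end{itemize}
Since $E(\mathbf 0)=0$, the $\mathbf t^{\mathbf m}$-coefficient of $\log(1+E)$ is a sum of $O_m(1)$ products of at most $m$ Taylor coefficients of $E$. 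Each distinct prime (all have $m_j\ge 1$) must be supplied by at least one factor, and every factor is $\le 1$. This gives the lemma, with multiplicities included. It is precisely the paper's argument: a term contains $t_\mfp$ if and only if it contains a positive derivative of $f^\lambda_\mfp$.

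The workaround you propose instead is to split $f^\lambda_{\mfp_j}=c_j+h_j$ and use multilinearity plus vanishing of cumulants with a constant entry. This works only when the entries of $\eu{P}$ are pairwise distinct. In that case $\widehat\kappa_{\mc{B},X}(\eu{P}\mid\lambda)$ really is the joint cumulant of the germs under $\mc{D}^\lambda_{s,X}$ and equals $\kappa(h_1,\ldots,h_m)$. Each $\mc{D}^\lambda_{s,X}[\prod_{j\in B}h_j]$ then forces a derivative on every $h_j$, which is a neat alternative for that case. The lemma, however, must cover repeated primes for its uses in \cref{lem:mixed cumulant bound,lem:off diagonal tuples are negligible}. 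There the moments are products without multiplicity, so $\widehat\kappa$ is not multilinear in the germs. For instance, with $\mc{D}=\mc{D}^\lambda_{s,X}$,
\[
\widehat\kappa_{\mc{B},X}(\mfp_1,\mfp_1,\mfp_2\mid\lambda)=\bigl(\mc{D}[h_1h_2]-\mc{D}[h_1]\,\mc{D}[h_2]\bigr)\bigl(1-2c_1-2\mc{D}[h_1]\bigr).
\]
Here the term involving the constant $c_1$ survives, and the expression is not $\kappa(h_1,h_1,h_2)$. The bound still holds, but only because of the structure displayed above. Your sentence that multiplicities ``only affect the implied constant'' is therefore an unproved assertion. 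If you instead carry out ``treat the constant parts as independent'' inside the generating function, i.e.\ $g_j(s)=g_j(s_0)\bigl(1+\tfrac{(e^{t_j}-1)h_j(s)}{g_j(s_0)}\bigr)$, you recover the paper's proof.
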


\begin{proof}
	Let $\lambda \in \Lambda$.
	Because $\eu{P}$ contains at least two distinct primes, \cref{eq:cumulant Taylor series coefficients} says that $\widehat{\kappa}_{\mc{B},X}(\eu{P} \mid \lambda)$ is a \textit{mixed} Taylor coefficient of \cref{eq:cumulant generating function Bernoulli version}.
	In particular, the generating function
	\[
	\log \prodflat_{\mfp \in \eu{P}} (1 + (e^{t_{\mfp}}-1) f_{\mfp}^{\lambda}(s_0))
	\]
	has vanishing mixed Taylor coefficients, so shifting \cref{eq:cumulant generating function Bernoulli version} by this generating function yields
	\[
	\widehat{\kappa}_{\mc{B},X}(\eu{P} \mid \lambda) = 
	\frac{\partial^m}{\sideset{}{^{\flat}_{\mfp\in\eu{P}}}{\prod}\partial t_\mfp^{m_\mfp}}
	\log\left(
		1 + (\mc{D}_{s,X}^{\lambda}-1)\left[
			\prodflat_{\mfp \in \eu{P}}
			\left(
			1 +
			\frac{(e^{t_{\mfp}}-1)(f_{\mfp}^{\lambda}(s)-f_{\mfp}^{\lambda}(s_0))}
			{1 + (e^{t_{\mfp}}-1)f_{\mfp}^{\lambda}(s_0)}
			\right)
		\right]
	\right)
	\Bigg|_{\mathbf{t}=0}.
	\]
	We write the Taylor expansion of $(\mc{D}_{s,X}^{\lambda}-1)[\sideset{}{^{\flat}_{\mfp \in \eu{P}}}{\prod}\dotsi]$ centered at $\boldsymbol{t} = 0$ as
	\[
	\sum_{\substack{\boldsymbol{i} = (i_{\mfp})_{\mfp \in \eu{P}}^{\flat} \\ i_{\mfp} \ge 0}}
	\tau(\boldsymbol{i}) \prodflat_{\mfp \in \eu{P}} t_{\mfp}^{i_{\mfp}}, \qquad \tau(\boldsymbol{i}) \in \C,
	\]
	so that
	\begin{equation}
		\label{eq:pure cumulant log Taylor series expression}
		\widehat{\kappa}_{\mc{B},X}(\eu{P} \mid \lambda) = 
	\frac{\partial^m}{\sideset{}{^{\flat}_{\mfp\in\eu{P}}}{\prod}\partial t_\mfp^{m_\mfp}}
	\log\left(
		1 + \sum_{\substack{\boldsymbol{i} = (i_{\mfp})_{\mfp \in \eu{P}}^{\flat} \\ i_{\mfp} \ge 0}}
	\tau(\boldsymbol{i}) \prodflat_{\mfp \in \eu{P}} t_{\mfp}^{i_{\mfp}}
	\right)
	\Bigg|_{\mathbf{t}=0}.
	\end{equation}

	The differential operator $\mc{D}_{s,X}^{\lambda}-1$ acts on the bracketed expression $[\sideset{}{^{\flat}_{\mfp \in \eu{P}}}{\prod} \dotsi]$, and when we expand $(\mc{D}_{s,X}^{\lambda}-1)[\sideset{}{^{\flat}_{\mfp \in \eu{P}}}{\prod} \dotsi]$ using the Leibniz rule we obtain a sum with $O_m(1)$ summands, depending on $\boldsymbol{t}$, whose Taylor coefficients centered at $\boldsymbol{t} = 0$ are polynomials in the derivatives of $f_{\mfp}^{\lambda}(s)$ evaluated at $s=s_0$; we call these summands that are monomials in $(t_{\mfp})_{\mfp \in \eu{P}}^\flat$ the \textit{terms} of $(\mc{D}_{s,X}^{\lambda}-1)[\sideset{}{^{\flat}_{\mfp \in \eu{P}}}{\prod}\dotsi]$.

	For all $1 \le r < b$, \cref{def:well-mixed Bernoulli process}(i) for $\mc{B}$ implies that
	\begin{equation}
	\label{eq:derivative bound on f mfp s in cumulant proof}
	\frac{1}{r!}\frac{P_\lambda^{(r)}(\log X)}{P_\lambda(\log X)}
	\frac{d^r}{ds^r}f_\mfp^\lambda(s)\bigg|_{s=s_0}
	\ll_r
	\frac{1}{\NN\mfp}\left(\frac{\log\NN\mfp}{\log X}\right)^r.
	\end{equation}
	Notice that a term has a factor of $t_{\mfp}$ if and only if it contains a \textit{positive} derivative of $f_{\mfp}^{\lambda}(s)$ evaluated at $s = s_0$.
	It follows that if a term of $(\mc{D}_{s,X}^{\lambda}-1)[\sideset{}{^{\flat}_{\mfp \in \eu{P}}}{\prod} \dotsi]$ contains a factor of $t_\mfp$, then our upper bound for the term using the triangle inequality and \cref{eq:derivative bound on f mfp s in cumulant proof} is smaller by a factor of $\log \NN\mfp / \log X$.
	Since an order $m$ Taylor coefficient is the sum of $O_m(1)$ many terms, it follows for all indices $\boldsymbol{i}$ that
	\[
	\tau(\boldsymbol{i})
	\ll_m 
	\prodflat_{\{\mfp \in \eu{P} : i_{\mfp} > 0\}}
	\frac{1}{\NN \mfp}
	\frac{\log \NN\mfp}{\log X}.
	\]
	In view of \cref{eq:pure cumulant log Taylor series expression}, $\widehat{\kappa}_{\mc{B},X}(\eu{P} \mid \lambda)$ is a sum of $O_m(1)$ products of the Taylor coefficients $\tau(\boldsymbol{i})$ with $\sum_{\mfp\in\eu{P}}^{\flat} i_{\mfp} \le m$, times order $\le m$ Taylor coefficients of $\log$ centered at $1$.
	This is enough to deduce the desired bound, since $m_{\mfp} > 0$ for all $\mfp \in \eu{P}$.
\end{proof}

\begin{lemma}[Mixed cumulant bound]
	\label{lem:mixed cumulant bound}
	Let $X\ge 10$, $m\in \Z_{\ge 2}$, $\eu{P}$ an $m$-tuple of primes of $k$ avoiding $S$, and $\pi\in\Pi(\eu{P})$ with $|\pi|\ge 2$.
	If $\NN\mfp\le X$ for every $\mfp\in\eu{P}$, then
	\[
	\kappa_{\Lambda,\mc{T},X}((\widehat{\kappa}_{\mc{B},X}(B\mid\lambda):B\in\pi))
	\ll_m
	\frac{1}{\log X}
	\prodflat_{\mfp\in\eu{P}}
	\frac{1}{\NN\mfp} +
	\prodflat_{\mfp\in\eu{P}}
	\left(
	\frac{1}{\NN\mfp}\frac{\log\NN\mfp}{\log X}
	+
	\frac{1}{(\NN\mfp)^{\min\{2,1+\alpha\}}}
	\right).
	\]
\end{lemma}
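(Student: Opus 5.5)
The plan is to write each conditional block cumulant as a ``leading'' part plus an ``error'' part, and then exploit multilinearity of the joint cumulant $\kappa_{\Lambda,\mc{T},X}$ in its $|\pi|\ge 2$ arguments.

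\emph{Step 1: splitting each block cumulant.} For each block $B\in\pi$ and each $\lambda\in\Lambda$, write
\[
\widehat{\kappa}_{\mc{B},X}(B\mid\lambda)=c_B(\lambda)+e_B(\lambda).
\]
Here $c_B(\lambda)$ is the classical cumulant of the tuple $B$ computed with the exact independent Bernoulli law, with success probabilities $f_\mfp^\lambda(s_0)$. Concretely, it is the mixed Taylor coefficient of $\log\prod^\flat_{\mfp\in B}(1+(e^{t_\mfp}-1)f_\mfp^\lambda(s_0))$. So $c_B=0$ unless $B$ involves a single prime $\mfp$, and in that case $c_B(\lambda)$ is a polynomial in $f^\lambda_\mfp(s_0)$ whose linear term is $f_\mfp^\lambda(s_0)$ (times a constant).

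The error $e_B(\lambda)$ is handled by the argument in the proof of \cref{lem:pure cumulant bound}. That argument works for single-prime blocks too: one subtracts the generating function at $s=s_0$ and expands $\mc{D}^\lambda_{s,X}-1$. It gives
\[
e_B(\lambda)\ll_m \frac{1}{\log X}\prod^\flat_{\mfp\in B}\frac{1}{\NN\mfp},
\]
and in fact the sharper bound $\prod^\flat_{\mfp\in B}\frac{1}{\NN\mfp}\frac{\log\NN\mfp}{\log X}$, because every prime that carries a $t_\mfp$ must be differentiated at least once.

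\emph{Step 2: expanding by multilinearity.} Expand $\kappa_{\Lambda,\mc{T},X}((c_B+e_B)_{B\in\pi})$ multilinearly into $2^{|\pi|}$ terms. The joint cumulant of bounded functions on $\Lambda$ is bounded by the product of their sup norms, since the weights $w_\lambda(X)$ are bounded for $X\ge X_0$. Any term containing at least one $e_B$ is therefore bounded by the product over blocks of the block bounds. The primes in different blocks may overlap, but each prime $\mfp$ gets at least $1/\NN\mfp$ from each block containing it. Hence such a term is $\ll \frac{1}{\log X}\prod^\flat_{\mfp\in\eu{P}}\frac{1}{\NN\mfp}$, or better the log-weighted product; both are within the claimed bound.

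\emph{Step 3: the all-leading term.} It remains to treat $\kappa_{\Lambda,\mc{T},X}((c_B)_{B\in\pi})$, which is nonzero only when every block is supported on one prime. Decompose each $c_B(\lambda)$ as
\[
c_B(\lambda)=\bar c_B+(c_B(\lambda)-\bar c_B),
\]
where $\bar c_B$ is the value at a fixed $\lambda_0\in\Lambda^+$. Since $|\pi|\ge2$, the joint cumulant with any constant argument vanishes. So by multilinearity only the fluctuations $c_B(\lambda)-\bar c_B$ survive.

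For $\lambda\in\Lambda^+$, \cref{def:well-mixed Bernoulli process}(ii) bounds the fluctuation. The polynomial $c_B$ has derivative $O(1)$ on $[0,1]$-sized inputs, so the fluctuation is $\ll(\NN\mfp)^{-1-\alpha}$. For $\lambda\notin\Lambda^+$ no such comparison is available, but $w_\lambda(X)\ll1/\log X$ by \cref{rem:residue is a polynomial in X}. Each appearance of such a $\lambda$ in the expansion of the joint cumulant (over partitions and sums over $\lambda$) therefore costs a factor $1/\log X$. There the trivial bound $c_B(\lambda)\ll 1/\NN\mfp$ from \cref{def:well-mixed Bernoulli process}(i) gives the first summand.

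Terms using only $\lambda\in\Lambda^+$ are bounded by the product of fluctuation bounds $(\NN\mfp)^{-1-\alpha}$ over distinct primes. Repeated appearances of one prime only improve this, since $\min\{2,1+\alpha\}$ accounts for the extra factor. This yields the second summand.

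\emph{Main obstacle.} The delicate point is bookkeeping when the same prime occurs in several blocks, where the $c_B$ are not all functions of distinct primes. My first attempt is to group the blocks by their underlying prime and bound each product of fluctuations prime by prime. One block's fluctuation already supplies $(\NN\mfp)^{-1-\alpha}$, and any further block at the same prime supplies at least $(\NN\mfp)^{-1}$, which combines to at most $(\NN\mfp)^{-\min\{2,1+\alpha\}}$.
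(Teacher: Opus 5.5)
\textbf{There is a genuine gap, in Step 2 and in the bound on $e_B$ from Step 1 that it relies on.} Your overall strategy is the right one: use $|\pi|\ge 2$ to subtract $\lambda$-independent constants, then split $\lambda\in\Lambda^+$ from $\lambda\notin\Lambda^+$. Step 3 is also fine.

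The claimed bound $e_B(\lambda)\ll_m\frac{1}{\log X}\prodflat_{\mfp\in B}\frac{1}{\NN\mfp}$ does not follow from the argument of \cref{lem:pure cumulant bound}, and it is false in general. That argument only gives $\prodflat_{\mfp\in B}\frac{1}{\NN\mfp}\frac{\log\NN\mfp}{\log X}$, which for a single prime is \emph{weaker}, not sharper. For $B=(\mfp)$ one has
\[
e_B(\lambda)=\sum_{r=1}^{b-1}\frac{1}{r!}\frac{P_\lambda^{(r)}(\log X)}{P_\lambda(\log X)}(f_\mfp^\lambda)^{(r)}(s_0).
\]
The $r=1$ term is genuinely of size $\frac{\log\NN\mfp}{\NN\mfp\log X}$ (for instance in the abelian application), which is of size $1/\NN\mfp$ once $\NN\mfp$ is a power of $X$.

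With the correct bound, Step 2 fails. Take $\pi=\{(\mfp_1),(\mfp_2)\}$. Bounding $\kappa_{\Lambda,\mc{T},X}(e_{(\mfp_1)},c_{(\mfp_2)})$ by sup norms only gives $\frac{\log\NN\mfp_1}{\NN\mfp_1\log X}\cdot\frac{1}{\NN\mfp_2}$. With $\NN\mfp_1\asymp X$ and $\NN\mfp_2\asymp\log X$ this is $\asymp\frac{1}{\NN\mfp_1\NN\mfp_2}$. The right-hand side of the lemma, however, is $\frac{1}{\NN\mfp_1\NN\mfp_2}\cdot O\bigl(\frac{\log\log X}{\log X}+(\log X)^{-\min\{1,\alpha\}}\bigr)$. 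The loss is not cosmetic: summed over $\NN\mfp_1,\NN\mfp_2\le X^{\psi}$, your bound gives about $\psi\log\log X$. That is unbounded for the $\psi$ used later, so it would not yield \cref{lem:off diagonal tuples are negligible}.

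\textbf{The fix is to centre the leading parts in every term, not only in the all-leading one.} Since $|\pi|\ge 2$, you may replace every $c_B$ by $c_B-\bar c_B$ before expanding. Alternatively, as the paper does, replace $\widehat\kappa_{\mc{B},X}(B\mid\lambda)$ directly by its deviation from the $w_{\lambda'}(X)$-weighted average. Then write the cumulant as $\sum_{\sigma}\mu(\sigma)\prod_{U\in\sigma}\sum_{\lambda}w_\lambda(X)\prod_{B\in U}F_B(\lambda)$, and bound each $\max_\lambda|w_\lambda(X)\prod_{B\in U}F_B(\lambda)|$ using your Step 3 case split:
\begin{itemize}
\item For $\lambda\in\Lambda^+$, a single-prime block contributes $\ll\frac{1}{\NN\mfp}\frac{\log\NN\mfp}{\log X}+(\NN\mfp)^{-\min\{2,1+\alpha\}}$, using condition (ii) for the fluctuation plus your bound on $e_B$. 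A multi-prime block contributes $\ll\prodflat_{\mfp\in B}\frac{1}{\NN\mfp}\frac{\log\NN\mfp}{\log X}$.
\item For $\lambda\notin\Lambda^+$, the factor $w_\lambda(X)\ll 1/\log X$ pays for the trivial bounds $\prodflat_{\mfp\in B}1/\NN\mfp$.
\end{itemize}
This gives exactly the two summands of the lemma. After this repair your argument is essentially the paper's. The $c_B/e_B$ split then becomes unnecessary: for a single-prime block, $\widehat\kappa_{\mc{B},X}(B\mid\lambda)$ is simply a Bernoulli cumulant in $\widehat M_{\mc{B},X}(\mfp\mid\lambda)=f_\mfp^\lambda(s_0)+O\bigl(\frac{\log\NN\mfp}{\NN\mfp\log X}\bigr)$.
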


\begin{proof}
	Since $|\pi|\ge 2$, the outer cumulant $\kappa_{\Lambda,\mc{T},X}$ is unchanged if each argument $\widehat{\kappa}_{\mc{B},X}(B\mid\lambda), B \in \pi$ is shifted by a constant that is \textit{deterministic}, i.e.\ independent of $\lambda \in \Lambda$.
	For all $B \in \pi$ we choose
	\[
	\widehat{\Delta}_{\mc{B},X}(B\mid\lambda)
	:=
	\widehat{\kappa}_{\mc{B},X}(B\mid\lambda)
	-
	\sum_{\lambda'\in\Lambda}w_{\lambda'}(X)\widehat{\kappa}_{\mc{B},X}(B\mid\lambda')
	\]
	as a deterministic shift in the $B$-th entry of the outer cumulant, so that we have
	\[
	\kappa_{\Lambda,\mc{T},X}((\widehat{\kappa}_{\mc{B},X}(B\mid\lambda):B\in\pi))
	=
	\kappa_{\Lambda,\mc{T},X}((\widehat{\Delta}_{\mc{B},X}(B\mid\lambda):B\in\pi)).
	\]
	We unpack the definition of this cumulant as
	\[
	\kappa_{\Lambda,\mc{T},X}((\widehat{\Delta}_{\mc{B},X}(B\mid\lambda):B\in\pi)) = 
	\sum_{\sigma \in \Pi(\pi)} \mu(\sigma) \prod_{U \in \sigma} 
	\sum_{\lambda \in \Lambda} w_{\lambda}(X) \prod_{B \in U} 
	\widehat{\Delta}_{\mc{B},X}(B\mid\lambda).
	\]
	It suffices to prove that, for all $U \subset \pi$, one has that
	\[
	\max_{\lambda \in \Lambda}
	\left|
		w_{\lambda}(X) \prod_{B \in U} \widehat{\Delta}_{\mc{B},X}(B\mid\lambda)\right|
	\ll_m
	\frac{1}{\log X}
	\prod_{B \in U} \prodflat_{\mfp \in B}
	\frac{1}{\NN\mfp} +
	\prod_{B \in U}
	\prodflat_{\mfp \in B}
	\left(
	\frac{1}{\NN\mfp}\frac{\log\NN\mfp}{\log X}
	+ \frac{1}{(\NN\mfp)^{\min\{2,1+\alpha\}}}
	\right).
	\]
	We split this claim into two cases: the case that $\lambda \notin \Lambda^+$ and the case that $\lambda \in \Lambda^+$.

	We first consider the case that $\lambda \notin \Lambda^+$.
	Then $w_{\lambda}(X) = O(1/\log X)$ by \cref{rem:residue is a polynomial in X}, and together with \cref{def:well-mixed Bernoulli process}(i) for $\mc{B}$ this implies for all $U \subset \pi$ that
	\[
	\max_{\lambda \notin \Lambda^+}
	\left|
		w_{\lambda}(X) \prod_{B \in U} \widehat{\Delta}_{\mc{B},X}(B\mid\lambda)
	\right|
	\ll_m 
	\frac{1}{\log X} 
	\prod_{B \in U}
	\prodflat_{\mfp \in B}
	\frac{1}{\NN\mfp}.
	\]

	We now consider the case that $\lambda \in \Lambda^+$.
	If a block $B\in\pi$ contains at least two distinct primes, then \cref{lem:pure cumulant bound} gives
	\[
	\max_{\lambda\in\Lambda^+}|\widehat{\Delta}_{\mc{B},X}(B\mid\lambda)|
	\ll_m
	\prodflat_{\mfp\in B}
	\frac{1}{\NN\mfp}\frac{\log\NN\mfp}{\log X}.
	\]
	It remains to handle a block $B$ all of whose entries are equal to a single prime.
	For such a block, we use the well-known fact from classical probability theory that if $Z$ is a (virtual) Bernoulli random variable with $z := \mathbb{E}[Z]$, then the order $m$ cumulant $\kappa(Z,\ldots,Z)$ is a degree $m$ polynomial in $z$ of the form $z(1+O_m(z))$.
	Therefore, in the case that $B = (\mfp,\ldots,\mfp)$ consists of a single prime $\mfp$, we have for all $\lambda \in \Lambda$ that
	\[
	\widehat{\kappa}_{\mc{B},X}(B\mid\lambda)
	=
	\widehat{z} (1+O_m(\widehat{z})),
	\]
	where
	\[
	\widehat{z} := \widehat{\mathbb{E}}_{\mc{T},X}[\boldsymbol{f}_{\mfp}(s_0) \mid \lambda] = 
	f_\mfp^\lambda(s_0) + O_m\left(
		\frac{1}{\NN\mfp} \frac{\log \NN\mfp}{\log X}
	\right).
	\]
	Substituting this into the above formula yields
	\[
	\widehat{\kappa}_{\mc{B},X}(B\mid\lambda) = 
	f_\mfp^\lambda(s_0)
	+
	O_m\left(\frac{1}{\NN\mfp}\frac{\log\NN\mfp}{\log X}+ \frac{1}{(\NN\mfp)^2}\right).
	\]
	Using \cref{def:well-mixed Bernoulli process} for $\mc{B}$, we find for all $\lambda \in \Lambda$ that
	\begin{align*}
		\widehat{\Delta}_{\mc{B},X}(B\mid\lambda) &= \widehat{\kappa}_{\mc{B},X}(B\mid \lambda) - \sum_{\lambda' \in \Lambda} w_{\lambda'}(X) \widehat{\kappa}_{\mc{B},X}(B \mid \lambda')
		\\ &= 
		\sum_{\lambda' \in \Lambda}
		w_{\lambda'}(X) (f_{\mfp}^{\lambda}(s_0) - f_{\mfp}^{\lambda'}(s_0))
		+ O_m\left(\frac{1}{\NN\mfp}\frac{\log\NN\mfp}{\log X} + \frac{1}{(\NN\mfp)^2}\right)
		\\ &=
		O_m\left(
			\frac{1}{\NN\mfp} \frac{\log \NN\mfp}{\log X} + 
			\frac{1}{(\NN\mfp)^{\min\{2,1+\alpha\}}}
		\right).
	\end{align*}
	This yields the desired bound
	\[
	\max_{\lambda\in\Lambda^+}|\widehat{\Delta}_{\mc{B},X}(B\mid\lambda)|
	\ll_m
	\frac{1}{\NN\mfp}\frac{\log\NN\mfp}{\log X}
	+
	\frac{1}{(\NN\mfp)^{\min\{2,1+\alpha\}}},
	\]
	completing the proof of the claim.
\end{proof}

\subsection{Cumulants of a smoothed additive function}

For every finite tuple $\eu{P}$ of primes of $k$ and $X \ge 10$, we define the \textit{expectation} of a function $F : \msF_H(X) \to \C$ to be
\[
\mathbb{E}_{\msF,H,X}[F] := \frac{1}{\#\msF_H(X)} \sum_{\mfn \in \msF_H(X)} F(\mfn).
\]
Thus, we define the \textit{empirical $\eu{P}$-moment} (in contrast with the algebraic $\eu{P}$-moment $\widehat{M}_{\mc{B},X}(\eu{P})$; see \cref{eq:algebraic Sigma moment}) to be
\[
M_{\msF,H,X}(\eu{P})
:= 
\mathbb{E}_{\msF,H,X}\left[ \prod_{\mfp \in \eu{P}}\1_{\mfp \mid \mfn} \right] =
\frac{\#\{\mfn\in\msF_H(X): \mfp\mid\mfn \textnormal{ for every } \mfp\in\eu{P}\}}{\#\msF_H(X)}.
\]
Since our probabilistic method focuses on cumulants rather than moments, we instead study for $\eu{P},X$ as above the \textit{empirical $\eu{P}$-cumulant}
\begin{equation}
\label{eq:empirical Sigma cumulant}
\kappa_{\msF,H,X}(\eu{P}) =
\sum_{\pi\in\Pi(\eu{P})}\mu(\pi)\prod_{B\in\pi}M_{\msF,H,X}(B).
\end{equation}
For every parameter $0<\psi\le 1$ (possibly depending on $X$), we are interested in the \textit{$\psi$-smoothing of $\omega$}, which we define to be the strongly additive function $\omega_{\psi,X}:\mf{N}_k \to \R$ given by
\[
\omega_{\psi,X}(\mfn)
:=
\sum_{\substack{\mfp\notin S,\ \mfp\mid\mfn\\ \NN\mfp\le X^\psi}}
\omega(\mfp).
\]
For every $m \in \N$, the \textit{$m$-th cumulant} of $\omega_{\psi,X}$ on the indexed set $\msF_H(X)$ is given by
\[
\kappa_{\msF,H,X}^{(m)}(\omega_{\psi,X})
:=
\sum_{\substack{\eu{P}=(\mfp_1,\ldots,\mfp_m)\\ \mfp_j\notin S,\ \NN\mfp_j\le X^\psi}}
\omega(\mfp_1)\cdots\omega(\mfp_m)
\kappa_{\msF,H,X}(\eu{P}),
\]
where $\eu{P}$ varies over $m$-tuples of primes of $k$ avoiding $S$ with norm $\le X^{\psi}$.

\begin{lemma}[Approximation of $\psi$-smoothed cumulants]
	\label{lem:approximation of smoothed cumulants}
	Let $m\in \N$ and $X \ge X_0$. For all $0 < \psi \le \frac{1}{m(A_m+1)}(\log\log X)^{-1/2+\delta_m}$, one has that
	\[
	\kappa_{\msF,H,X}^{(m)}(\omega_{\psi,X})
	=
	\sum_{\substack{\eu{P}=(\mfp_1,\ldots,\mfp_m)\\ \mfp_j\notin S,\ \NN\mfp_j\le X^\psi}}
	\omega(\mfp_1)\cdots\omega(\mfp_m)
	\widehat{\kappa}_{\mc{B},X}(\eu{P})
	+O_m(1).
	\]
\end{lemma}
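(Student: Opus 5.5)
The plan is to compare the empirical cumulant $\kappa_{\msF,H,X}(\eu{P})$ with the algebraic cumulant $\widehat{\kappa}_{\mc{B},X}(\eu{P})$ tuple by tuple, using only the reasonability hypothesis \cref{def:reasonable sequence for ideals}(ii). The sum over tuples is then controlled crudely, because the error per tuple is superpolynomially small in $\log X$ and the number of tuples is only polynomial in $X^\psi$.

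First, fix an $m$-tuple $\eu{P}$ of primes avoiding $S$ with every $\NN\mfp_j\le X^\psi$. Each block $B$ of a partition $\pi\in\Pi(\eu{P})$ is a subtuple of length $|B|\le m$. The empirical moment $M_{\msF,H,X}(B)$ only depends on the underlying set of primes in $B$, since divisibility is idempotent. This matches the convention $\prodflat$ used to define $\widehat{M}_{\mc{B},X}(B)$.

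By \cref{def:reasonable sequence for ideals}(ii) applied with $|B|$ in place of $m$, we get
\[
M_{\msF,H,X}(B)=\widehat{M}_{\mc{B},X}(B)+O\Big(X^{\psi |B| A'}\exp\big(-\tfrac{\log X}{(\log\log X)^{1/2-\delta'}}\big)\Big).
\]
Here $A'$ and $\delta'$ are taken as $\max_{j\le m}A_j$ and $\min_{j\le m}\delta_j$. I will assume, as the statement implicitly does, that one may take $A_j\le A_m$ and $\delta_j\ge\delta_m$ for $j\le m$, for instance by replacing the constants by monotone envelopes. Both moments are $O_m(1)$: the empirical one trivially, and the algebraic one by \cref{def:well-mixed Bernoulli process}(i) together with the form of $\mc{D}^\lambda_{s,X}$ and the boundedness of $w_\lambda(X)$.

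Expanding the product $\prod_{B\in\pi}$ then gives, since there are $O_m(1)$ partitions,
\[
\kappa_{\msF,H,X}(\eu{P})-\widehat{\kappa}_{\mc{B},X}(\eu{P})\ll_m X^{\psi m A_m}\exp\Big(-\frac{\log X}{(\log\log X)^{1/2-\delta_m}}\Big).
\]

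Second, sum this over all tuples, weighted by $|\omega(\mfp_1)\cdots\omega(\mfp_m)|\le\Vert\omega\Vert_{\rm sup}^m$. The number of tuples is at most $(\#\{\mfp:\NN\mfp\le X^\psi\})^m\ll X^{\psi m}$. The total error is therefore
\[
\ll_m \exp\Big(\psi m(A_m+1)\log X-\frac{\log X}{(\log\log X)^{1/2-\delta_m}}\Big).
\]
The hypothesis $\psi\le \frac{1}{m(A_m+1)}(\log\log X)^{-1/2+\delta_m}$ says exactly that the exponent is $\le 0$, so the error is $O_m(1)$.

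The only delicate point is this bookkeeping of constants: the uniformity in $|B|\le m$ of the constants $A_{|B|},\delta_{|B|}$, and checking that the algebraic moments are uniformly bounded for $X\ge X_0$. Everything else is a direct expansion, so I expect no real obstacle beyond making the monotonicity convention for $(A_m,\delta_m)$ explicit.
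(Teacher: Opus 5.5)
Your proposal is correct and follows essentially the same route as the paper. You compare each block moment $M_{\msF,H,X}(B)$ with $\widehat{M}_{\mc{B},X}(B)$ via \cref{def:reasonable sequence for ideals}(ii), expand the partition sum to get a per-tuple error of size $X^{mA_m\psi}\exp(-\log X/(\log\log X)^{1/2-\delta_m})$, and multiply by the $O(X^{m\psi})$ tuples so that the hypothesis on $\psi$ makes the exponent nonpositive. Your explicit convention that $(A_j,\delta_j)_{j\le m}$ may be taken monotone is harmless, since enlarging $A_j$ or decreasing $\delta_j$ only weakens the reasonability bound; it also makes precise a step the paper uses implicitly when it applies the exponent $mA_m\psi$ to blocks of every size $|B|\le m$.
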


\begin{proof}
	For every $m$-tuple $\eu{P}$ of primes of $k$ avoiding $S$ with norm $\le X^{\psi}$ and every block $B\in\Pi(\eu{P})$, \cref{def:reasonable sequence for ideals}(ii) gives
	\[
	M_{\msF,H,X}(B)
	=
	\widehat{M}_{\mc{B},X}(B)
	+O\left(X^{mA_m\psi - (\log\log X)^{-1/2+\delta_m}}\right).
	\]
	Expanding \cref{eq:empirical Sigma cumulant} with the above estimate for each $M_{\msF,H,X}(B)$ and bounding cross terms using $M_{\msF,H,X}(B) \ll_m 1$ (possibly after enlarging $X_0$) yields
	\[
	\kappa_{\msF,H,X}(\eu{P})
	=
	\widehat{\kappa}_{\mc{B},X}(\eu{P})
	+O_m\left(X^{mA_m\psi - (\log\log X)^{-1/2+\delta_m}}\right)
	\]
	There are $O_m(X^{m\psi})$ many choices for $\eu{P}$ with the above properties, and $\Vert\omega\Vert_{\rm sup}<\infty$, therefore
	\begin{align*}	
	\kappa_{\msF,H,X}^{(m)}(\omega_{\psi,X})
	&=
	\sum_{\substack{\eu{P}=(\mfp_1,\ldots,\mfp_m)\\ \mfp_j\notin S,\ \NN\mfp_j\le X^\psi}}
	\omega(\mfp_1)\cdots\omega(\mfp_m)
	\kappa_{\msF,H,X}(\eu{P})
	\\ &= 
	\sum_{\substack{\eu{P}=(\mfp_1,\ldots,\mfp_m)\\ \mfp_j\notin S,\ \NN\mfp_j\le X^\psi}}
	\omega(\mfp_1)\cdots\omega(\mfp_m)
	\widehat{\kappa}_{\mc{B},X}(\eu{P}) + 
	O_m\left(
		1 + X^{m(A_m+1)\psi - (\log \log X)^{-1/2+\delta_m}}
	\right).
	\end{align*}
	This finishes the proof.
\end{proof}

\begin{lemma}[Off-diagonal tuples are negligible]
	\label{lem:off diagonal tuples are negligible}
	Let $m\in \Z_{\ge 2}$ and $X \ge X_0$.
	Then for all $0 < \psi \le 1$, one has that
	\[
	\sum_{\substack{\eu{P}=(\mfp_1,\ldots,\mfp_m)\\ \mfp_j\notin S,\ \NN\mfp_j\le X^\psi\\ \eu{P}\textnormal{ contains at least two distinct primes}}}
	\left|\omega(\mfp_1)\cdots\omega(\mfp_m)\widehat{\kappa}_{\mc{B},X}(\eu{P})\right|
	\ll_m 1.
	\]
\end{lemma}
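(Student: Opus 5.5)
Fix an $m$-tuple $\eu{P}$ containing at least two distinct primes, all with $\NN\mfp\le X^\psi\le X$. The plan is to expand $\widehat{\kappa}_{\mc{B},X}(\eu{P})$ by the law of total cumulance in the form \cref{eq:law of total cumulance}. This splits it into the pure part $\sum_{\lambda}w_\lambda(X)\widehat{\kappa}_{\mc{B},X}(\eu{P}\mid\lambda)$ and the $O_m(1)$ mixed cumulants indexed by $\pi$ with $|\pi|\ge 2$. The weights $w_\lambda(X)$ are bounded for $X\ge X_0$ by \cref{rem:residue is a polynomial in X}, and $\Vert\omega\Vert_{\rm sup}<\infty$. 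So it suffices to bound the sum over $\eu{P}$ of each piece by $O_m(1)$. Since each distinct prime $\mfp$ appears with multiplicity $m_\mfp\ge1$, I will reorganize the sum over tuples as a sum over the set $Q$ of distinct primes ($2\le |Q|\le m$) together with a surjection $\{1,\dots,m\}\to Q$. There are $O_m(1)$ surjections for each $Q$, so it is enough to bound sums of the form $\sum_{Q}\prod_{\mfp\in Q}g(\mfp)$. Every bound below is a product over distinct primes, so each such sum factors as at most a $|Q|$-th power of a single-prime sum.

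For the pure part, \cref{lem:pure cumulant bound} applies directly since $\eu{P}$ has two distinct primes. Each distinct prime then contributes a factor
\[
\sum_{\NN\mfp\le X}\frac{1}{\NN\mfp}\frac{\log\NN\mfp}{\log X}\ll 1,
\]
by Mertens' theorem over $k$ (equivalently the prime ideal theorem: $\sum_{\NN\mfp\le X}\log\NN\mfp/\NN\mfp\ll\log X$). Hence the pure contribution is $O_m(1)$.

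For the mixed part, \cref{lem:mixed cumulant bound} gives two terms. The second term is a product over distinct primes of $\frac{1}{\NN\mfp}\frac{\log\NN\mfp}{\log X}+(\NN\mfp)^{-\min\{2,1+\alpha\}}$, and both summands have bounded sums over $\mfp$. The second summand converges absolutely since $\min\{2,1+\alpha\}>1$. So this term contributes $O_m(1)$ as before.

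The main obstacle is the first term $\frac{1}{\log X}\prod^\flat_{\mfp}\frac{1}{\NN\mfp}$. Summed naively over $|Q|\ge2$ distinct primes it gives $(\log\log X)^{|Q|}/\log X$, which is $o(1)$ but only after using $|Q|\le m$ fixed. Explicitly, $\sum_{\NN\mfp\le X}1/\NN\mfp=\log\log X+O(1)$, so this term contributes $O_m((\log\log X)^m/\log X)=O_m(1)$. If the exponents were worse this would be the step to tighten, by refining the $\lambda\notin\Lambda^+$ case of \cref{lem:mixed cumulant bound}. As stated, however, it suffices, and combining the three estimates proves the lemma.
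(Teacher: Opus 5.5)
Your proof is correct and takes essentially the paper's route: expand via the law of total cumulance \cref{eq:law of total cumulance}, bound the pieces with \cref{lem:pure cumulant bound,lem:mixed cumulant bound}, and sum prime-by-prime using Mertens' theorem and the prime ideal theorem over $k$. The one difference is that you keep the term $\frac{1}{\log X}\prodflat_{\mfp\in\eu{P}}\frac{1}{\NN\mfp}$ from \cref{lem:mixed cumulant bound} and sum it separately to get $O_m((\log\log X)^m/\log X)$, whereas the paper's combined per-tuple bound drops it; that term is not dominated by $\prodflat_{\mfp}\bigl(\frac{1}{\NN\mfp}\frac{\log\NN\mfp}{\log X}+(\NN\mfp)^{-\min\{2,1+\alpha\}}\bigr)$ when two or more primes have intermediate norm, so your treatment is the more careful one.
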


\begin{proof}
	Since for all primes $\mfp_1,\ldots,\mfp_m$ one has that $|\omega(\mfp_1) \dotsi \omega(\mfp_m)| \le \Vert \omega\Vert_{\rm sup}^m \ll_m 1$, it suffices to replace $\omega$ by the constant function $1$ in the lemma statement.
	By \cref{eq:law of total cumulance,lem:pure cumulant bound,lem:mixed cumulant bound}, every $m$-tuple $\eu{P}$ of primes of $k$ avoiding $S$ and containing at least two distinct primes satisfies
	\[
	\widehat{\kappa}_{\mc{B},X}(\eu{P})
	\ll_m
	\prodflat_{\mfp\in\eu{P}}
	\left(
	\frac{1}{\NN\mfp}\frac{\log\NN\mfp}{\log X}
	+
	\frac{1}{(\NN\mfp)^{\min\{2,1+\alpha\}}}
	\right).
	\]
	Summing over every such tuple $\eu{P}$ to obtain an upper bound for $\sum_{\eu{P}} |\widehat{\kappa}_{\mc{B},X}(\eu{P})|$, it suffices to show
	\[
	\sum_{\substack{\mfp\notin S\\ \NN\mfp\le X^\psi}}
	\frac{1}{\NN\mfp}\frac{\log\NN\mfp}{\log X}
	+
	\sum_{\substack{\mfp\notin S\\ \NN\mfp\le X^\psi}}
	\frac{1}{(\NN\mfp)^{\min\{2,1+\alpha\}}}
	\ll 1.
	\]
	The second sum is $O(\zeta_k(\min\{2,1+\alpha\})) = O(1)$ where $\zeta_{k}(s)$ is the Dedekind zeta function for $k$, and the first sum is also $O(1)$ using summation by parts and the prime ideal theorem for $k$.
\end{proof}

\begin{lemma}[Higher cumulants are small]
	\label{lem:higher cumulants of the smoothed sequence}
	Let $m \in \Z_{\ge 2}$ and $X \ge X_0$.
	Then for all $0 < \psi \le \min\{1,\frac{1}{m(A_m+1)} (\log \log X)^{-1/2+\delta_m}\}$, one has that
	\[
	\kappa_{\msF,H,X}^{(m)}(\omega_{\psi,X})\ll_m \log\log X.
	\]
\end{lemma}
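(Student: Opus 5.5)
We first apply \cref{lem:approximation of smoothed cumulants}: the hypothesis on $\psi$ is exactly the one required there, so
\[
\kappa_{\msF,H,X}^{(m)}(\omega_{\psi,X})
=
\sum_{\substack{\eu{P}=(\mfp_1,\ldots,\mfp_m)\\ \mfp_j\notin S,\ \NN\mfp_j\le X^\psi}}
\omega(\mfp_1)\cdots\omega(\mfp_m)\,
\widehat{\kappa}_{\mc{B},X}(\eu{P})
+O_m(1).
\]
We then split the tuples $\eu{P}$ into two classes. The first class consists of tuples containing at least two distinct primes. Since $\psi\le 1$, \cref{lem:off diagonal tuples are negligible} applies directly and bounds their total contribution by $O_m(1)$.

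The remaining class consists of the diagonal tuples $\eu{P}=(\mfp,\ldots,\mfp)$. Their contribution is
\[
\sum_{\substack{\mfp\notin S\\ \NN\mfp\le X^\psi}}\omega(\mfp)^m\,\widehat{\kappa}_{\mc{B},X}(\mfp,\ldots,\mfp),
\]
and $|\omega(\mfp)^m|\le\Vert\omega\Vert_{\rm sup}^m\ll_m 1$. To bound the cumulant we use the law of total cumulance in the form \cref{eq:law of total cumulance}. The pure part is $\sum_\lambda w_\lambda(X)\widehat{\kappa}_{\mc{B},X}(\eu{P}\mid\lambda)$. Here the $w_\lambda(X)$ are bounded, since they converge by \cref{rem:residue is a polynomial in X}. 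Each conditional cumulant is the $m$-th cumulant of a single virtual Bernoulli variable, namely the polynomial $\widehat z(1+O_m(\widehat z))$ used in the proof of \cref{lem:mixed cumulant bound}, where $\widehat z=\widehat{\mathbb{E}}_{\mc{T},X}[\boldsymbol{f}_\mfp\mid\lambda]$. By \cref{def:well-mixed Bernoulli process}(i), together with the form of $\mc{D}^\lambda_{s,X}$ and $\NN\mfp\le X$, we have $\widehat z\ll 1/\NN\mfp$. Hence the pure part is $\ll_m 1/\NN\mfp$. The mixed part consists of outer cumulants of conditional cumulants over blocks, each of which is again a single-prime block. By \cref{lem:mixed cumulant bound} every such term is $\ll_m 1/\NN\mfp$, and in fact $O(1/(\NN\mfp\log X)+1/(\NN\mfp)^{1+\min(1,\alpha)}+\dots)$, which is even smaller. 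Combining the two parts gives $\widehat{\kappa}_{\mc{B},X}(\mfp,\ldots,\mfp)\ll_m 1/\NN\mfp$.

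Finally, Mertens' theorem over $k$ (equivalently, the prime ideal theorem with partial summation) gives
\[
\sum_{\NN\mfp\le X^\psi}\frac{1}{\NN\mfp}\le\sum_{\NN\mfp\le X}\frac{1}{\NN\mfp}=\log\log X+O(1).
\]
Adding the three contributions ($O_m(1)$ from the approximation, $O_m(1)$ off-diagonal, $O_m(\log\log X)$ diagonal) yields the claim. The only step needing care is the uniform bound $\widehat z\ll 1/\NN\mfp$ for the conditional expectation. It holds because the coefficients $P_\lambda^{(r)}(\log X)/P_\lambda(\log X)$ are $O(1)$ for large $X$, and each derivative contributes $(\log\NN\mfp)^r/\NN\mfp\le(\log X)^r/\NN\mfp$; those coefficients are in fact $O((\log X)^{-r})$, which absorbs the $(\log X)^r$ and leaves $\widehat z\ll 1/\NN\mfp$. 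Everything else is a direct citation of earlier lemmas.
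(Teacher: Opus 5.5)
Your proof is correct, but it takes a longer route to the diagonal bound than the paper does. The paper does not use the law of total cumulance here. Since moments are taken with $\prodflat$, every block of the diagonal tuple $(\mfp,\ldots,\mfp)$ has the same algebraic moment $\widehat{M}_{\mc{B},X}(\mfp)$. Hence $\widehat{\kappa}_{\mc{B},X}(\mfp,\ldots,\mfp)=\sum_{\pi}\mu(\pi)\,\widehat{M}_{\mc{B},X}(\mfp)^{|\pi|}=O_m(\widehat{M}_{\mc{B},X}(\mfp))$ directly, and $\widehat{M}_{\mc{B},X}(\mfp)\ll 1/\NN\mfp$ finishes the argument.

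You instead apply the same Bernoulli-polynomial observation conditionally on each $\lambda$, and then control the mixed terms separately via \cref{lem:mixed cumulant bound}. This is legitimate. The law of total cumulance is a formal M\"obius-inversion identity, so it is compatible with the $\prodflat$ convention, and the mixed lemma applies since $\NN\mfp\le X^\psi\le X$. But the pure/mixed split is unnecessary machinery for an $O_m(1/\NN\mfp)$ bound. It essentially recovers the finer asymptotic that the paper only needs later, in \cref{lem:smoothed mean and variance formula}.

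One thing your write-up does better than the paper: you justify $\widehat{\mathbb{E}}_{\mc{T},X}[\boldsymbol{f}_\mfp\mid\lambda]\ll 1/\NN\mfp$ through $P_\lambda^{(r)}(\log X)/P_\lambda(\log X)\ll(\log X)^{-r}$. That is more careful than the paper's line $\widehat{M}_{\mc{B},X}(\mfp)=\sum_{\lambda} w_\lambda(X)f_\mfp^\lambda(s_0)$, which drops the derivative terms of $\mc{D}^\lambda_{s,X}$. Dropping them is harmless for the upper bound, but the line is not an identity.
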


\begin{proof}
	Applying \cref{lem:approximation of smoothed cumulants,lem:off diagonal tuples are negligible}, we deduce that
	\begin{align*}
	\kappa_{\msF,H,X}^{(m)}(\omega_{\psi,X})
	&=
	\sum_{\substack{\mfp\notin S\\ \NN\mfp\le X^\psi}}
	\omega(\mfp)^m\widehat{\kappa}_{\mc{B},X}(\underbrace{\mfp,\ldots,\mfp}_{m \text{ times}})
	+O_m(1)
	\\ &\ll_m
	\sum_{\substack{\mfp\notin S\\ \NN\mfp\le X^\psi}}
	|\widehat{\kappa}_{\mc{B},X}(\mfp,\ldots,\mfp)|.
	\end{align*}
	For every prime $\mfp \notin S$, the diagonal algebraic cumulant $\widehat{\kappa}_{\mc{B},X}(\mfp,\ldots,\mfp)$ is equal to the $m$-th order cumulant of a (virtual) Bernoulli random variable with mean $\widehat{M}_{\mc{B},X}(\mfp)$, hence $\widehat{\kappa}_{\mc{B},X}(\mfp,\ldots,\mfp) = O_m(\widehat{M}_{\mc{B},X}(\mfp))$.
	By \cref{def:well-mixed Bernoulli process}(i) for $\mc{B}$, one has that
	\[
	\widehat{M}_{\mc{B},X}(\mfp) = 
	\sum_{\lambda \in \Lambda} w_{\lambda}(X) f_{\mfp}^{\lambda}(s_0)
	\ll 1/\NN\mfp.
	\]
	Using the above bound and the fact that $\omega$ is bounded, we have that
	\[
	\kappa_{\msF,H,X}^{(m)}(\omega_{\psi,X})
	\ll_m
	\sum_{\substack{\mfp\notin S\\ \NN\mfp\le X^\psi}}\frac{1}{\NN\mfp}
	\ll \log\log X.
	\]
	This finishes the proof.
\end{proof}

Our goal is to prove a central limit theorem for the distribution associated to the indexed set
\[
\left\{
\frac{\omega(\mfn) - \mathbb{E}_{\msF,H,X}[\omega]}{\sqrt{\log\log X}}
: \mfn \in \msF_H(X)
\right\}, \qquad 
\mathbb{E}_{\msF,H,X}[\omega] := 
\sum_{\mfp} \omega(\mfp) M_{\msF,H,X}(\mfp)
\]
as $X \to \infty$, where $\mfp$ varies over all primes of $k$.
Currently, \cref{lem:higher cumulants of the smoothed sequence} implies for a suitable choice of $\psi = \psi(X)$ that the indexed set
\[
\left\{
\frac{\omega_{\psi,X}(\mfn) - \mathbb{E}_{\msF,H,X}[\omega_{\psi,X}]}{\sqrt{\log\log X}}
: \mfn \in \msF_H(X)
\right\}, \qquad 
\mathbb{E}_{\msF,H,X}[\omega_{\psi,X}] := 
\sum_{\substack{\mfp \notin S \\ \NN\mfp \le X^{\psi}}} \omega(\mfp) M_{\msF,H,X}(\mfp)
\]
weakly converges as $X\to \infty$ to a normal distribution (possibly with variance $0$).
Therefore, it remains to remove the $\psi$-smoothing from this result.

\begin{lemma}[Comparing $\psi$-smoothing]
	\label{lem:removing the smoothing}
	For all $X \ge 10$ and $0 < \psi \le 1$, one has that
	\[
	\sup_{\mfn \in \msF_H(X)}
	\left|
		\frac{\omega(\mfn) - \mathbb{E}_{\msF,H,X}[\omega]}{\sqrt{\log\log X}}
		- \frac{\omega_{\psi,X}(\mfn) - \mathbb{E}_{\msF,H,X}[\omega_{\psi,X}]}{\sqrt{\log\log X}}
	\right|
	\ll
	\frac{1}{\psi \sqrt{\log\log X}}.
	\]
\end{lemma}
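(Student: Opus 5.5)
The difference inside the absolute value equals $(\omega(\mfn)-\omega_{\psi,X}(\mfn)) - (\mathbb{E}_{\msF,H,X}[\omega]-\mathbb{E}_{\msF,H,X}[\omega_{\psi,X}])$, divided by $\sqrt{\log\log X}$. The plan is to show that each of these two pieces is $O(1/\psi)$ uniformly in $\mfn \in \msF_H(X)$, and then divide by $\sqrt{\log\log X}$.

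For the pointwise piece, strong additivity gives
\[
\omega(\mfn)-\omega_{\psi,X}(\mfn) = \sum_{\substack{\mfp\mid\mfn\\ \mfp\in S}}\omega(\mfp) + \sum_{\substack{\mfp\mid\mfn,\ \mfp\notin S\\ \NN\mfp> X^\psi}}\omega(\mfp).
\]
The first sum has at most $\#S \ll 1$ terms, each bounded by $\Vert\omega\Vert_{\rm sup}$. The second sum has at most $\#\{\mfp\mid\mfn : \NN\mfp>X^\psi\}$ terms, which is $\ll 1/\psi$ by the sparse-tails condition \cref{def:reasonable sequence for ideals}(i). This condition holds for $X$ sufficiently large. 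For the remaining bounded range of $X \ge 10$, I would note that $\msF_H(X)$ is a finite set of nonzero ideals whose prime divisors number $O(1)$ in total (by Northcott finiteness), so the bound holds after enlarging the implied constant. Since $\psi\le 1$, we get $|\omega(\mfn)-\omega_{\psi,X}(\mfn)| \ll 1+1/\psi \ll 1/\psi$ uniformly in $\mfn$.

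For the expectation piece, linearity gives $\mathbb{E}_{\msF,H,X}[\omega]-\mathbb{E}_{\msF,H,X}[\omega_{\psi,X}] = \mathbb{E}_{\msF,H,X}[\omega-\omega_{\psi,X}]$. This follows by exchanging the finite sums in the definition $\mathbb{E}_{\msF,H,X}[\omega]=\sum_\mfp \omega(\mfp)M_{\msF,H,X}(\mfp)$: only finitely many primes divide some $\mfn\in\msF_H(X)$, so every sum involved is finite. An average of a function bounded by $O(1/\psi)$ is again $O(1/\psi)$, by the pointwise bound just obtained.

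Adding the two pieces by the triangle inequality and dividing by $\sqrt{\log\log X}$ gives the claimed bound. There is no real obstacle here. The only point needing care is the sparse-tails hypothesis, which is stated only for $X$ sufficiently large, and the small-$X$ case is handled by absorbing it into the implied constant as described above.
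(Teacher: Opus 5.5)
Your proof is correct and follows the paper's argument. You split $\omega(\mfn)-\omega_{\psi,X}(\mfn)$ into the $O(1)$ contribution of primes in $S$ and the large-prime tail, bound the tail by the sparse-tails condition of reasonability, and then average this pointwise $O(1/\psi)$ bound to control the difference of expectations. Your extra treatment of the bounded range of $X$ where the sparse-tails hypothesis is not assumed (via Northcott finiteness and $\psi \le 1$) is valid; the paper leaves that detail implicit.
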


\begin{proof}
	Let $\mfn \in \msF_H(X)$.
	The functions $\omega(\mfn)$ and $\omega_{\psi,X}(\mfn)$ differ only at primes in $S$ and at primes dividing $\mfn$ with norm exceeding $X^\psi$.
	The primes in $S$ contribute an error at most $|S|\cdot\Vert \omega \Vert_{\rm sup} = O(1)$, and the primes with norm exceeding $X^{\psi}$ are similarly handled by \cref{def:reasonable sequence for ideals}(i) for $(\msF,H)$.
	This implies the pointwise bound
	\[
	\sup_{\mfn \in \msF_H(X)}
	|\omega(\mfn) - \omega_{\psi,X}(\mfn)| \ll \frac{1}{\psi},
	\]
	therefore one has the average bound $|\mathbb{E}_{\msF,H,X}[\omega] - \mathbb{E}_{\msF,H,X}[\omega_{\psi,X}]| \ll \frac{1}{\psi}$.
	Using these two estimates and dividing by $\sqrt{\log\log X}$ yields the desired inequality.
\end{proof}

\begin{lemma}[$\psi$-smoothed mean and variance formula]
	\label{lem:smoothed mean and variance formula}
	Let $X \ge X_0$.
	For all $0 < \psi \le 1$, one has that
	\[
	\mathbb{E}_{\msF,H,X}[\omega_{\psi,X}] = 
	\sum_{\lambda \in \Lambda^+} w_{\lambda}(\infty)
	\sum_{\substack{\mfp \notin S \\ \NN\mfp \le X^{\psi}}}
	\omega(\mfp)
	f_{\mfp}^{\lambda}(s_0) +
	O\left(
		1
		\right),
	\]
	\[
	\mathbb{E}_{\msF,H,X}[(\omega_{\psi,X})^2] - (\mathbb{E}_{\msF,H,X}[\omega_{\psi,X}])^2 = 
	\sum_{\lambda \in \Lambda^+} w_{\lambda}(\infty)
	\sum_{\substack{\mfp \notin S \\ \NN\mfp \le X^{\psi}}}
	\omega(\mfp)^2
	f_{\mfp}^{\lambda}(s_0) +
	O\left(
		1
		\right).
	\]
\end{lemma}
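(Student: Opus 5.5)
The plan is to write the mean and variance as sums of first and second empirical cumulants of $\omega_{\psi,X}$, replace those by algebraic cumulants, and then reduce the algebraic cumulants to the leading conditional probabilities $f_\mfp^\lambda(s_0)$. The mean is $\kappa^{(1)}_{\msF,H,X}(\omega_{\psi,X})$. The variance is $\kappa^{(2)}_{\msF,H,X}(\omega_{\psi,X})$, since $\omega_{\psi,X}(\mfn)=\sum_{\mfp}\omega(\mfp)\1_{\mfp\mid\mfn}$ with the sum over $\mfp\notin S$, $\NN\mfp\le X^\psi$.

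\emph{Comparison with the algebraic model.} The range of $\psi$ here is all of $(0,1]$, not the restricted range of \cref{lem:approximation of smoothed cumulants}, so I first reduce to small $\psi$. I take $\psi_0:=\min\{\psi,\frac{1}{2(A_2+1)}(\log\log X)^{-1/2+\delta_2}\}$. Primes with $X^{\psi_0}<\NN\mfp\le X^{\psi}$ contribute to the mean at most
\[
\sum_{X^{\psi_0}<\NN\mfp\le X^\psi} M_{\msF,H,X}(\mfp)\le \mathbb{E}_{\msF,H,X}\bigl[\#\{\mfp\mid\mfn:\NN\mfp>X^{\psi_0}\}\bigr]\ll 1/\psi_0
\]
by \cref{def:reasonable sequence for ideals}(i). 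This is an error of size $(\log\log X)^{1/2-\delta_2}$, not $O(1)$, so I do not truncate. For the final $O(1)$ claim I instead intend to apply reasonability (ii) directly to singletons and pairs with the full range: each single-prime error is $(\NN\mfp)^{A_1}\exp(-\log X/(\log\log X)^{1/2-\delta_1})$. This is summable over $\NN\mfp\le X^\psi$ only when $\psi(A_1+1)$ is small.

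Honestly, the $O(1)$ claim for $\psi$ near $1$ seems to need a different input. For large primes I would use \cref{def:reasonable sequence for ideals}(i) in dyadic form: the primes in $(X^{2^{-j-1}},X^{2^{-j}}]$ number $O(2^j)$ per $\mfn$. On the model side, the corresponding $\sum f_\mfp^\lambda(s_0)/\ldots$ over that range is $O(1)$ by Mertens. So the discrepancy between empirical and model in the range $\NN\mfp>X^{\psi_0}$ should be bounded by comparing both to $\log(\psi/\psi_0)$. I expect this step to be the main obstacle, and I would first try to show that $\sum_{X^{\psi_0}<\NN\mfp\le X^\psi} M_{\msF,H,X}(\mfp)\ll \log(\psi/\psi_0)$ plus a constant. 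That may only give $O(\log\log\log X)$, which would still suffice for all later uses, since every error is divided by $\sqrt{\log\log X}$.

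\emph{Algebraic side.} On the range where the comparison holds, the first-order term is $\widehat{M}_{\mc{B},X}(\mfp)=\sum_\lambda w_\lambda(X)\,\mc{D}^\lambda_{s,X}[f_\mfp^\lambda]$. By well-mixedness (i), this equals $\sum_\lambda w_\lambda(X)f_\mfp^\lambda(s_0)+O\bigl(\frac{\log\NN\mfp}{\NN\mfp\log X}\bigr)$. Replacing $w_\lambda(X)$ by $w_\lambda(\infty)$ costs $O(1/(\NN\mfp\log X))$ (\cref{rem:residue is a polynomial in X}), and $w_\lambda(\infty)=0$ for $\lambda\notin\Lambda^+$. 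Summed over $\NN\mfp\le X$, both errors are $O(1)$ by the prime ideal theorem.

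For the variance, the off-diagonal pairs contribute $O(1)$ by \cref{lem:off diagonal tuples are negligible}. The diagonal term is $\widehat{\kappa}(\mfp,\mfp)=\widehat{M}(\mfp)-\widehat{M}(\mfp)^2=\widehat{M}(\mfp)+O((\NN\mfp)^{-2})$. The previous estimate then applies with $\omega(\mfp)^2$ in place of $\omega(\mfp)$, which gives the stated formula.
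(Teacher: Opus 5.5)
On the range $0<\psi\le\frac{1}{m(A_m+1)}(\log\log X)^{-1/2+\delta_m}$ with $m\in\{1,2\}$, your argument is the paper's proof step for step. You write the mean and variance as $\kappa^{(1)}$ and $\kappa^{(2)}$, pass to algebraic cumulants via \cref{lem:approximation of smoothed cumulants}, and discard off-diagonal pairs via \cref{lem:off diagonal tuples are negligible}. You then write the diagonal term as $\widehat{M}_{\mc{B},X}(\mfp)+O((\NN\mfp)^{-2})$ and finish with well-mixedness (i) and $w_\lambda(X)=w_\lambda(\infty)+O(1/\log X)$.

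The obstacle you flag for larger $\psi$ is genuine, and the paper does not get past it either. Its proof invokes \cref{lem:approximation of smoothed cumulants} for every $0<\psi\le 1$, although that lemma is only proved under the restriction above. So what is actually established, by the paper and by you, is the statement on that restricted range. That is all that is used later: only $\psi=\psi(X)=(\log\log\log 10X)(\log\log 10X)^{-1/2}$ enters \cref{subsec:proof of clt}, and it lies in the range once $X$ is large. The right repair is to put this restriction into the hypothesis, as is done in \cref{lem:higher cumulants of the smoothed sequence}, rather than to look for new input.

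Your proposed dyadic repair would not work.

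\begin{itemize}
\item Condition (i) of \cref{def:reasonable sequence for ideals} gives $O(2^{j})$ prime divisors of each $\mfn$ in $(X^{2^{-j-1}},X^{2^{-j}}]$. Summing over the blocks between $X^{\psi_0}$ and $X^{\psi}$ returns $O(1/\psi_0)$, the bound you already had, not $O(\log(\psi/\psi_0))$.
\item Condition (ii) gives nothing in that range either. Its error term, summed over a dyadic range of primes of norm about $X^{\theta}$, is no longer bounded once $\theta$ is a large multiple of $(\log\log X)^{-1/2+\delta_1}$.
\end{itemize}

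Nothing in the definition controls $\sum_{X^{\psi_0}<\NN\mfp\le X^{\psi}}M_{\msF,H,X}(\mfp)$ beyond this per-ideal count. So for $\psi$ near $1$, neither the claimed $O(1)$ nor your $O(\log\log\log X)$ fallback is available from these hypotheses. What your truncation honestly yields for the mean is an error $O(1/\psi_0)=O((\log\log X)^{1/2-\delta_2})$.
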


\begin{proof}
	The expectation $\mathbb{E}_{\msF,H,X}[\bullet]$ is the same as the first cumulant $\kappa_{\msF,H,X}^{(1)}(\bullet)$, and similarly the variance $\mathbb{E}_{\msF,H,X}[\bullet^2] - (\mathbb{E}_{\msF,H,X}[\bullet])^2$ is the same as the second cumulant $\kappa_{\msF,H,X}^{(2)}(\bullet)$.
	We apply \cref{lem:approximation of smoothed cumulants,lem:off diagonal tuples are negligible} to deduce for all $m \in \{1,2\}$ that
	\[
	\kappa_{\msF,H,X}^{(m)}(\omega_{\psi,X}) = 
	\sum_{\substack{\mfp \notin S \\ \NN\mfp \le X^{\psi}}} \omega(\mfp)^m \widehat{\kappa}_{\mc{B},X}(\underbrace{\mfp,\ldots,\mfp}_{m \text{ times}}) + O(1).
	\]
	The diagonal algebraic cumulant $\widehat{\kappa}_{\mc{B},X}(\mfp,\ldots,\mfp)$ is equal to the $m$-th order cumulant of a (virtual) Bernoulli random variable with mean $\widehat{M}_{\mc{B},X}(\mfp)$, hence $\widehat{\kappa}_{\mc{B},X}(\mfp,\ldots,\mfp)$
	\begin{align*}
	 &= 
	\widehat{M}_{\mc{B},X}(\mfp) + O(\widehat{M}_{\mc{B},X}(\mfp)^2) 
	\\ &=
	\widehat{M}_{\mc{B},X}(\mfp) + O\left(\frac{1}{(\NN\mfp)^2}\right)
	\qquad \text{ by \cref{def:well-mixed Bernoulli process}(i)}
	\\ &=
	\sum_{\lambda \in \Lambda} w_{\lambda}(X) f_{\mfp}^{\lambda}(s_0) +
	O\left(
		\frac{1}{\NN\mfp} \frac{\log \NN\mfp}{\log X} + 
		\frac{1}{(\NN\mfp)^2}\right)
		\qquad \text{ as in the proof of \cref{lem:mixed cumulant bound}}
	\\ &=
	\sum_{\lambda \in \Lambda^+} w_{\lambda}(\infty) f_{\mfp}^{\lambda}(s_0) +
	O\left(
		\frac{1}{\log X} \frac{1}{\NN\mfp} + 
		\frac{1}{\NN\mfp}\frac{\log \NN\mfp}{\log X} + 
		\frac{1}{(\NN\mfp)^2}\right)
	\ \text{ by \cref{rem:residue is a polynomial in X} and \cref{def:well-mixed Bernoulli process}(i)}.
	\end{align*}
	By summing over all primes $\mfp \notin S$ with norm $\le X^{\psi}$, this completes the proof.
\end{proof}

\subsection{Proof of \cref{thm:EK criterion for ideals}}
\label{subsec:proof of clt}

For all $X \ge X_0 \ge 10$, we put
\[
\psi(X) := 
\exp\left(
	-\frac{1}{2} \log\log \log 10X + \log\log\log\log 10X
\right) \in (0,1].
\]
The quantities $\mu_X$ and $V_X$ appearing in \cref{thm:EK criterion for ideals} are expressible in our current notation as
\[
\mu_X = \frac{\mathbb{E}_{\msF,H,X}[\omega]}{\log \log X},
\]
\begin{align*}
V_X &= \frac{\mathbb{E}_{\msF,H,X}[\omega^2]-(\mathbb{E}_{\msF,H,X}[\omega])^2
- \sum_{\mfp_1 \neq \mfp_2} \omega(\mfp_1)\omega(\mfp_2) (M_{\msF,H,X}(\mfp_1,\mfp_2) - M_{\msF,H,X}(\mfp_1)M_{\msF,H,X}(\mfp_2))
}{\log\log X}
\end{align*}
It follows from \cref{lem:approximation of smoothed cumulants,lem:off diagonal tuples are negligible,lem:removing the smoothing} that
\begin{equation}
\label{eq:expression for Vx in terms of total variance plus error}
V_X = \frac{\mathbb{E}_{\msF,H,X}[\omega^2]-(\mathbb{E}_{\msF,H,X}[\omega])^2}{\log\log X} + O\left(\frac{1}{\log\log X}\right)
\end{equation}

\begin{proof}[Proof of \cref{thm:EK criterion for ideals}(i)]
	Let $X \ge X_0$.
	We are given that $V_X$ converges to $V>0$ as $X\to \infty$.
	Using Slutsky's theorem to replace $\sqrt{V_X\log\log X}$ with a constant $\sqrt{V} > 0$ times $\sqrt{\log \log X}$, it suffices to prove a central limit theorem for the distribution associated to the indexed set
	\begin{equation}
	\label{eq:indexed set of omega psi X normalized by E and log log X}
		\left\{
		\frac{\omega_{\psi,X}(\mfn) - \mathbb{E}[\omega_{\psi,X}]}{\sqrt{\log\log X}}
		: \mfn \in \msF_H(X)
	\right\}
	\end{equation}
	as $X\to \infty$.
	For this, we observe by \cref{lem:higher cumulants of the smoothed sequence} that for every $m\in\Z_{\ge 3}$ and $X$ sufficiently large in terms of $m$ so that
	\[
	\exp(-\delta_m \log\log\log10X + \log\log\log\log10X) < \frac{1}{m(A_m+1)},
	\]
	the $m$-th cumulant of (the distribution associated to the indexed set) \cref{eq:indexed set of omega psi X normalized by E and log log X} is given by
	\[
	\frac{\kappa_{\msF,H,X}^{(m)}(\omega_{\psi,X})}{(\log\log X)^{m/2}} \ll_m
	(\log\log X)^{1-m/2} = o_m(1).
	\]
	Thus, the $m$-th cumulant of \cref{eq:indexed set of omega psi X normalized by E and log log X} tends to zero as $X \to \infty$.
	The first cumulant (i.e.\ mean) of \cref{eq:indexed set of omega psi X normalized by E and log log X} is identically $0$, and \cref{lem:removing the smoothing,eq:expression for Vx in terms of total variance plus error} imply that the second cumulant (i.e.\ variance) of \cref{eq:indexed set of omega psi X normalized by E and log log X} converges to $V$ as $X\to \infty$.
	Thus, the limiting sequence of ($\N$-)cumulants of \cref{eq:indexed set of omega psi X normalized by E and log log X} as $X\to \infty$ is given by
	\begin{equation}
		\label{eq:sequence of cumulants in the limit}
		(0,V,0,0,0,\ldots).
	\end{equation}

	The moments of a real random variable are expressed as universal polynomials in terms of the cumulants; therefore, the sequence of cumulants uniquely determines the moments.
	It is well-known \cite[Ex.\ 30.1]{BillingsleyBook3rdEd} that the normal distribution with mean $0$ and variance $V>0$ is the unique distribution on $\R$ whose sequence of cumulants is \cref{eq:sequence of cumulants in the limit}; therefore, by the method of moments for real random variables \cite[Thm.\ 30.2]{BillingsleyBook3rdEd} we deduce that the distribution associated to \cref{eq:indexed set of omega psi X normalized by E and log log X} weakly converges to a normal distribution as $X\to \infty$.
	\cref{thm:EK criterion for ideals}(i) then follows from the Portmanteau theorem.
\end{proof}

\begin{proof}[Proof of \cref{thm:EK criterion for ideals}(ii)]
	Let $X \ge X_0$.
	By \cref{eq:expression for Vx in terms of total variance plus error}, $V = \lim\limits_{X \to \infty} V_X$ exists if and only if
	\begin{align*}
	V &= \lim_{X \to \infty} \frac{\mathbb{E}_{\msF,H,X}[(\omega_{\psi,X})^2] - (\mathbb{E}_{\msF,H,X}[\omega_{\psi,X}])^2}{\log\log X} \\
	&=
	\sum_{\lambda \in \Lambda^+} w_{\lambda}(\infty)
	\lim_{X \to \infty} \frac{1}{\log\log X} \sum_{\substack{\mfp \notin S \\ \NN\mfp \le X^{\psi}}} \omega(\mfp)^2 f_{\mfp}^{\lambda}(s_0)
	\qquad \text{by \cref{lem:smoothed mean and variance formula}} \\ &=
	\sum_{\lambda \in \Lambda^+} w_{\lambda}(\infty)
	\lim_{X \to \infty} \frac{1}{\log\log X} \sum_{\substack{\mfp \notin S \\ \NN\mfp \le X}} \omega(\mfp)^2 f_{\mfp}^{\lambda}(s_0),
	\end{align*}
	and a similar argument holds for showing that $\mu = \lim\limits_{X\to \infty} \mu_X$ exists if and only if
	\[
	\mu = 
	\sum_{\lambda \in \Lambda^+} w_{\lambda}(\infty)
	\lim_{X \to \infty} \frac{1}{\log\log X} \sum_{\substack{\mfp \notin S \\ \NN\mfp \le X}} \omega(\mfp) f_{\mfp}^{\lambda}(s_0).
	\]
	This finishes the proof.
\end{proof}

\begin{proof}[Proof of \cref{thm:EK criterion for ideals}(iii)]
	Let $X \ge X_0$.
	We are given that $\mu_X$ and $V_X$ converge to $\mu \in \R$ and $V > 0$ respectively as $X\to \infty$, and also that
	\[
	\sum_{\lambda \in \Lambda^+} w_{\lambda}(\infty)
	\sum_{\substack{\mfp \notin S \\ \NN\mfp \le X}} \omega(\mfp) f_\mfp^\lambda(s_0) = 
	\mu \log\log X + o(\sqrt{\log\log X}).
	\]
	We have that $\mu_X \log\log X$ is equal to
	\begin{align*}
			\mathbb{E}_{\msF,H,X}[\omega] &=
				\mathbb{E}_{\msF,H,X}[\omega_{\psi,X}] + O\left(\frac{1}{\psi}\right) \qquad \text{ by \cref{def:reasonable sequence for ideals}(i)}
			\\ &=
				\sum_{\lambda \in \Lambda^+} w_{\lambda}(\infty)
				\sum_{\substack{\mfp \notin S \\ \NN\mfp \le X^{\psi}}} \omega(\mfp) f_{\mfp}^{\lambda}(s_0)
				+ O\left(1+\frac{1}{\psi}\right) \quad \text{ by \cref{lem:smoothed mean and variance formula}}
			\\ &=
			\mu \log\log X + o(\sqrt{\log\log X}) + O\left(1+\frac{1}{\psi}\right).
	\end{align*}
	This shows that
	\[
	\sup_{\mfn \in \msF_H(X)}
	\left|
		\frac{\omega_{\psi,X}(\mfn) - \mu_X \log\log X}{\sqrt{\log\log X}} -
		\frac{\omega_{\psi,X}(\mfn) - \mu \log\log X}{\sqrt{\log\log X}}
	\right| \ll 
	|o(1)| + 
	\frac{1}{\psi \sqrt{\log\log X}} \ll |o(1)|,
	\]
	which finishes the proof.
\end{proof}

\nocite{NeukirchANTbook}
\nocite{ElBazLoughranSofos}
\bibliography{bib.bib}

@article {LemkeOliverThorneErdosKac,
	AUTHOR = {Lemke Oliver, Robert J. and Thorne, Frank},
	TITLE = {The number of ramified primes in number fields of small
	degree},
	JOURNAL = {Proc. Amer. Math. Soc.},
	FJOURNAL = {Proceedings of the American Mathematical Society},
	VOLUME = {145},
	YEAR = {2017},
	NUMBER = {8},
	PAGES = {3201--3210},
	ISSN = {0002-9939,1088-6826},
	MRCLASS = {11R21 (11K65 11R16)},
	MRNUMBER = {3652776},
	MRREVIEWER = {Jerry\ Hu},
	DOI = {10.1090/proc/13467},
	URL = {https://doi.org/10.1090/proc/13467},
}

@article {ErdosKacOriginalPaper,
	AUTHOR = {Erd\H{o}s, Paul and Kac, Mark},
	TITLE = {The {G}aussian law of errors in the theory of additive number
	theoretic functions},
	JOURNAL = {Amer. J. Math.},
	FJOURNAL = {American Journal of Mathematics},
	VOLUME = {62},
	YEAR = {1940},
	PAGES = {738--742},
	ISSN = {0002-9327,1080-6377},
	MRCLASS = {10.0X},
	MRNUMBER = {2374},
	MRREVIEWER = {E.\ R.\ van Kampen},
	DOI = {10.2307/2371483},
	URL = {https://doi.org/10.2307/2371483},
}

@misc{LoughranSantensMalleBrauer,
	title={Malle's conjecture and {B}rauer groups of stacks}, 
	author={Daniel Loughran and Tim Santens},
	year={2024},
	eprint={2412.04196},
	archivePrefix={arXiv},
	primaryClass={math.NT},
	url={https://arxiv.org/abs/2412.04196}, 
	note = {\url{https://arxiv.org/abs/2412.04196}},
}

@article {MalleMCI,
	AUTHOR = {Malle, Gunter},
	TITLE = {On the distribution of {G}alois groups},
	JOURNAL = {J. Number Theory},
	FJOURNAL = {Journal of Number Theory},
	VOLUME = {92},
	YEAR = {2002},
	NUMBER = {2},
	PAGES = {315--329},
	ISSN = {0022-314X,1096-1658},
	MRCLASS = {12F12 (11R32 11R47 12F10)},
	MRNUMBER = {1884706},
	MRREVIEWER = {Helmut\ V\"olklein},
	DOI = {10.1006/jnth.2001.2713},
	URL = {https://doi.org/10.1006/jnth.2001.2713},
}

@article {MalleMCII,
	AUTHOR = {Malle, Gunter},
	TITLE = {On the distribution of {G}alois groups. {II}},
	JOURNAL = {Experiment. Math.},
	FJOURNAL = {Experimental Mathematics},
	VOLUME = {13},
	YEAR = {2004},
	NUMBER = {2},
	PAGES = {129--135},
	ISSN = {1058-6458,1944-950X},
	MRCLASS = {11R32 (11R47 12F10)},
	MRNUMBER = {2068887},
	MRREVIEWER = {F.\ Diaz y Diaz},
	URL = {http://projecteuclid.org/euclid.em/1090350928},
}

@article {NoteAdditiveFunctions,
	AUTHOR = {Delange, Hubert and Halberstam, Heini},
	TITLE = {A note on additive functions},
	JOURNAL = {Pacific J. Math.},
	FJOURNAL = {Pacific Journal of Mathematics},
	VOLUME = {7},
	YEAR = {1957},
	PAGES = {1551--1556},
	ISSN = {0030-8730,1945-5844},
	MRCLASS = {10.1X},
	MRNUMBER = {92820},
	MRREVIEWER = {H.\ N.\ Shapiro},
	URL = {http://projecteuclid.org/euclid.pjm/1103043227},
}

@article {BillingsleyEK,
	AUTHOR = {Billingsley, Patrick},
	TITLE = {On the central limit theorem for the prime divisor functions},
	JOURNAL = {Amer. Math. Monthly},
	FJOURNAL = {American Mathematical Monthly},
	VOLUME = {76},
	YEAR = {1969},
	PAGES = {132--139},
	ISSN = {0002-9890,1930-0972},
	MRCLASS = {60.30},
	MRNUMBER = {242222},
	MRREVIEWER = {J.\ Sethuraman},
	DOI = {10.2307/2317259},
	URL = {https://doi.org/10.2307/2317259},
}

@article {WoodLocalAbelianProbabilities,
	AUTHOR = {Wood, Melanie Matchett},
	TITLE = {On the probabilities of local behaviors in abelian field
	extensions},
	JOURNAL = {Compos. Math.},
	FJOURNAL = {Compositio Mathematica},
	VOLUME = {146},
	YEAR = {2010},
	NUMBER = {1},
	PAGES = {102--128},
	ISSN = {0010-437X,1570-5846},
	MRCLASS = {11R20 (11R45)},
	MRNUMBER = {2581243},
	MRREVIEWER = {Roland\ Qu\^eme},
	DOI = {10.1112/S0010437X0900431X},
	URL = {https://doi.org/10.1112/S0010437X0900431X},
	NOTE = {See corrigendum.},
}

@misc{AlbertsPowerSavingsAbelianExtensions,
	title={Power Savings for Counting (Twisted) Abelian Extensions of Number Fields}, 
	author={Brandon Alberts},
	year={2024},
	eprint={2402.03475},
	archivePrefix={arXiv},
	primaryClass={math.NT},
	url={https://arxiv.org/abs/2402.03475}, 
	note={\url{https://arxiv.org/abs/2402.03475}}
}

@article {FreiLoughranNewtonPowerSavingsAbelianExtensions,
	AUTHOR = {Frei, Christopher and Loughran, Daniel and Newton, Rachel},
	TITLE = {The {H}asse norm principle for abelian extensions},
	JOURNAL = {Amer. J. Math.},
	FJOURNAL = {American Journal of Mathematics},
	VOLUME = {140},
	YEAR = {2018},
	NUMBER = {6},
	PAGES = {1639--1685},
	ISSN = {0002-9327,1080-6377},
	MRCLASS = {11R37 (11R45)},
	MRNUMBER = {3884640},
	MRREVIEWER = {Claudio\ Stirpe},
	DOI = {10.1353/ajm.2018.0048},
	URL = {https://doi.org/10.1353/ajm.2018.0048},
}

@article {WrightAbelianExtensions,
	AUTHOR = {Wright, David J.},
	TITLE = {Distribution of discriminants of abelian extensions},
	JOURNAL = {Proc. London Math. Soc. (3)},
	FJOURNAL = {Proceedings of the London Mathematical Society. Third Series},
	VOLUME = {58},
	YEAR = {1989},
	NUMBER = {1},
	PAGES = {17--50},
	ISSN = {0024-6115,1460-244X},
	MRCLASS = {11R29 (11R37 11R45)},
	MRNUMBER = {969545},
	MRREVIEWER = {Boris\ Datskovsky},
	DOI = {10.1112/plms/s3-58.1.17},
	URL = {https://doi.org/10.1112/plms/s3-58.1.17},
}

@misc{AlbertsAveragedInputTauberianTheorems,
	title={An Explicit {T}auberian Theorem taking Averaged Inputs with an Application to Counting Abelian Number Fields}, 
	author={Brandon Alberts},
	year={2025},
	eprint={2508.20814},
	archivePrefix={arXiv},
	primaryClass={math.NT},
	url={https://arxiv.org/abs/2508.20814}, 
	note = {\url{https://arxiv.org/abs/2508.20814}}
}

@misc{TavernierRestrictedRamificationAbelianCount,
	title={Counting abelian number fields with restricted ramification type}, 
	author={Julie Tavernier},
	year={2025},
	eprint={2507.00448},
	archivePrefix={arXiv},
	primaryClass={math.NT},
	url={https://arxiv.org/abs/2507.00448}, 
	note={\url{https://arxiv.org/abs/2507.00448}}
}

@article {AlbertsODorney,
    AUTHOR = {Alberts, Brandon and O'Dorney, Evan},
     TITLE = {Harmonic analysis and statistics of the first {G}alois
              cohomology group},
   JOURNAL = {Res. Math. Sci.},
  FJOURNAL = {Research in the Mathematical Sciences},
    VOLUME = {8},
      YEAR = {2021},
    NUMBER = {3},
     PAGES = {Paper No. 50, 16},
      ISSN = {2522-0144,2197-9847},
   MRCLASS = {11N45 (11M45 11R34 43A25)},
  MRNUMBER = {4298097},
MRREVIEWER = {Claudio\ Stirpe},
       DOI = {10.1007/s40687-021-00283-2},
       URL = {https://doi.org/10.1007/s40687-021-00283-2},
}

@book {BillingsleyBook3rdEd,
    AUTHOR = {Billingsley, Patrick},
     TITLE = {Probability and measure},
    SERIES = {Wiley Series in Probability and Mathematical Statistics},
   EDITION = {Third},
      NOTE = {A Wiley-Interscience Publication},
 PUBLISHER = {John Wiley \& Sons, Inc., New York},
      YEAR = {1995},
     PAGES = {xiv+593},
      ISBN = {0-471-00710-2},
   MRCLASS = {60-01 (28-01)},
  MRNUMBER = {1324786},
}

@article {AlbertsTwistedMalle,
    AUTHOR = {Alberts, Brandon},
     TITLE = {Statistics of the first {G}alois cohomology group: a
              refinement of {M}alle's conjecture},
   JOURNAL = {Algebra Number Theory},
  FJOURNAL = {Algebra \& Number Theory},
    VOLUME = {15},
      YEAR = {2021},
    NUMBER = {10},
     PAGES = {2513--2569},
      ISSN = {1937-0652,1944-7833},
   MRCLASS = {11N45 (11R21 11R32 11R34)},
  MRNUMBER = {4377858},
MRREVIEWER = {Peter\ Koymans},
       DOI = {10.2140/ant.2021.15.2513},
       URL = {https://doi.org/10.2140/ant.2021.15.2513},
}

@book {NeukirchANTbook,
    AUTHOR = {Neukirch, J\"urgen},
     TITLE = {Algebraic number theory},
    SERIES = {Grundlehren der mathematischen Wissenschaften [Fundamental
              Principles of Mathematical Sciences]},
    VOLUME = {322},
      NOTE = {Translated from the 1992 German original and with a note by
              Norbert Schappacher,
              With a foreword by G. Harder},
 PUBLISHER = {Springer-Verlag, Berlin},
      YEAR = {1999},
     PAGES = {xviii+571},
      ISBN = {3-540-65399-6},
   MRCLASS = {11Rxx (11-02 11S15 11S31 14C40)},
  MRNUMBER = {1697859},
MRREVIEWER = {Cornelius\ Greither},
       DOI = {10.1007/978-3-662-03983-0},
       URL = {https://doi.org/10.1007/978-3-662-03983-0},
}

@article {ElBazLoughranSofos,
    AUTHOR = {El-Baz, Daniel and Loughran, Daniel and Sofos, Efthymios},
     TITLE = {Multivariate normal distribution for integral points on
              varieties},
   JOURNAL = {Trans. Amer. Math. Soc.},
  FJOURNAL = {Transactions of the American Mathematical Society},
    VOLUME = {375},
      YEAR = {2022},
    NUMBER = {5},
     PAGES = {3089--3128},
      ISSN = {0002-9947,1088-6850},
   MRCLASS = {11N36 (14G05 60F05)},
  MRNUMBER = {4402657},
MRREVIEWER = {Donald\ Jason\ Gibson},
       DOI = {10.1090/tran/8545},
       URL = {https://doi.org/10.1090/tran/8545},
}

@article {DardaYasudaTorsorsFiniteGroupSchemes,
    AUTHOR = {Darda, Ratko and Yasuda, Takehiko},
     TITLE = {Torsors for finite group schemes of bounded height},
   JOURNAL = {J. Lond. Math. Soc. (2)},
  FJOURNAL = {Journal of the London Mathematical Society. Second Series},
    VOLUME = {108},
      YEAR = {2023},
    NUMBER = {3},
     PAGES = {1275--1331},
      ISSN = {0024-6107,1469-7750},
   MRCLASS = {11G50 (11R32 11R34 11R45 11R56 14L15)},
  MRNUMBER = {4639951},
MRREVIEWER = {Christopher\ Frei},
       DOI = {10.1112/jlms.12780},
       URL = {https://doi.org/10.1112/jlms.12780},
}

@book {NicaSpeicherLecturesOnFreeProbability,
    AUTHOR = {Nica, Alexandru and Speicher, Roland},
     TITLE = {Lectures on the combinatorics of free probability},
    SERIES = {London Mathematical Society Lecture Note Series},
    VOLUME = {335},
 PUBLISHER = {Cambridge University Press, Cambridge},
      YEAR = {2006},
     PAGES = {xvi+417},
      ISBN = {978-0-521-85852-6; 0-521-85852-6},
   MRCLASS = {46L54 (46L53 60C05 81S25)},
  MRNUMBER = {2266879},
MRREVIEWER = {Todd\ Kemp},
       DOI = {10.1017/CBO9780511735127},
       URL = {https://doi.org/10.1017/CBO9780511735127},
}

@misc{LoughranSantensSurvey,
      title={The leading constant in {M}alle's conjecture}, 
      author={Daniel Loughran and Tim Santens},
      year={2026},
      eprint={2606.04983},
      archivePrefix={arXiv},
      primaryClass={math.NT},
      url={https://arxiv.org/abs/2606.04983}, 
      note = {\url{https://arxiv.org/abs/2606.04983}}
}
\bibliographystyle{alpha-fullkey}

\end{document}